\title[Deciding if a hyperbolic group splits over a given subgroup]{Deciding if a hyperbolic group splits over a given quasiconvex subgroup}
\author{Joseph Paul MacManus}
\address{Mathematical Institute, Andrew Wiles Building, Observatory Quarter, University of Oxford, Oxford, OX2 6GG, UK}
\email{macmanus@maths.ox.ac.uk}
\date{First draft: 22 October 2022. Final version: 28 May 2024.}
\DeclareMathOperator{\Comm}{Comm}
\DeclareMathOperator{\Lab}{Lab}
\newcommand{\Z}{\mathbb{Z}}
\newcommand{\N}{\mathbb{N}}
\newcommand{\set}[2]{\{#1  :  #2\}}
\newcommand{\shad}{\mathcal{S}}
\newcommand{\into}{\hookrightarrow}
\newtheorem{theorem}{Theorem}[section]
\newtheorem{proposition}[theorem]{Proposition}
\newtheorem{lemma}[theorem]{Lemma}
\newtheorem{corollary}[theorem]{Corollary}
\theoremstyle{definition}
\newtheorem{definition}[theorem]{Definition}
\newtheorem{example}[theorem]{Example}
\newtheorem{remark}[theorem]{Remark}
\numberwithin{equation}{section}
\begin{document}

\begin{abstract}
We present an algorithm which decides whether a given quasiconvex residually finite subgroup $H$ of a hyperbolic group $G$ is associated with a splitting. The methods developed also provide algorithms for computing the number of filtered ends $\tilde e(G,H)$ of $H$ in $G$ under certain hypotheses, and give a new straightforward algorithm for computing the number of ends $e(G,H)$ of the Schreier graph of $H$. Our techniques extend those of Barrett via the use of labelled digraphs, the languages of which encode information on the connectivity of $\partial G - \Lambda H$. 
\end{abstract}

\maketitle




\section{Introduction}

The study of decision problems within group theory is almost as old as the definition of an abstract group itself, dating back to Dehn's classical word, conjugacy, and isomorphism problems. The classical theorems of Novikov--Boone \cite{novikov1955algorithmic, boone1958word} and Adian--Rabin \cite{adian1957unsolvability, rabin1958recursive} state that these problems --- among many, many others --- turn out to be undecidable in the class of finitely presented groups. Thus, if we wish to search for effective solutions to group theoretical problems, one must restrict their scope to some ``nice'' subclass of groups. 
One such subclass is the class of hyperbolic groups. Introduced by Gromov in his landmark essay \cite{gromov1987hyperbolic}, these are groups whose Cayley graphs possess geometric properties reminiscent of negative curvature. Indeed, within this class many problems become decidable. For example, the class of hyperbolic groups has uniformly solvable word and conjugacy problems \cite[ch.~III.H]{bridson2013metric}, and more recently it was shown that one can distinguish isomorphism classes of hyperbolic groups \cite{sela1995isomorphism, dahmani2011isomorphism}.

Another algorithmic problem which has received attention in recent years is that of splitting detection. In the language of Bass--Serre theory \cite{serre2002trees}, a group $G$ \textit{splits} over a subgroup $H$ if $G$ admits a minimal simplicial action on a tree $T$ without inversions, and $H$ stabilises an edge in this action. This problem could be traced back to the algorithm of Jaco--Oertel \cite{jaco1984algorithm} which decides if a given closed irreducible 3-manifold $M$ is Haken, or equivalently if $\pi_1(M)$ splits over an infinite surface group. 
Splittings over finite subgroups are called  \textit{finite splittings}, and a celebrated theorem of Stallings \cite{stallings1968torsion, stallings1972group} states that a finitely generated group admits a finite splitting if and only if it has more than one geometric end (cf. \cite{scott1979topological}). This provides a powerful link between the coarse geometry of a group and its splitting properties. Returning to the realm of hyperbolic groups, we have the following unpublished result due to Gerasimov \cite{gerasimov}.

\begin{theorem}[Gerasimov]
There is an algorithm which, upon input of a presentation of a hyperbolic group $G$, will compute the number of ends of $G$.
\end{theorem}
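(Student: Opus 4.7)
The plan is to combine Stallings' theorem on ends with effective computations available in hyperbolic groups, handling each possible value $e(G)\in\{0,1,2,\infty\}$ in turn.

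The case $e(G)=0$, i.e.\ $G$ finite, is decided by enumerating the Cayley ball $B_n(1)$ --- computable once a Dehn presentation is in hand, which is itself constructed algorithmically after verifying hyperbolicity via Papasoglu's algorithm --- until it stabilises, run in parallel with a search for an element of infinite order; exactly one of these processes halts. Assuming $G$ infinite, the case $e(G)=2$ (i.e.\ $G$ virtually cyclic) is handled by producing any infinite-order element $g$, computing the maximal elementary subgroup $E(g)$ effectively from $\delta$ and $g$, and checking whether $[G:E(g)]=1$.

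The substantive step is distinguishing $e(G)=1$ from $e(G)=\infty$. By Stallings' theorem, this is equivalent to deciding whether $G$ admits a nontrivial splitting over a finite subgroup. A hyperbolic group has only finitely many conjugacy classes of finite subgroups, and representatives can be enumerated effectively, since the order of any finite subgroup is bounded in terms of $\delta$ and the generating set. It thus suffices to decide, for each such representative $F$, whether $G$ splits over $F$. The natural approach is to study the infinite components of $\Cay(G)-N_r(F)$ as $r$ grows, declaring a splitting to exist once at least two such components appear.

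The main obstacle, and the essential input from hyperbolicity, is an effective bound on $r$ in terms of $|F|$ and $\delta$ past which the component count stabilises, so that the \emph{absence} of a splitting can be certified in finite time. This quantitative control on connectivity at infinity is precisely the flavour of argument developed in the body of the paper via labelled-digraph languages, extending Barrett's techniques; the present theorem corresponds to the special case in which the quasiconvex subgroup under consideration is trivial, so the languages in question encode the components of $\partial G$ itself rather than of $\partial G - \Lambda H$.
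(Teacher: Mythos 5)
The paper does not actually prove this theorem: it is stated in the introduction as an unpublished result of Gerasimov and cited as such, so there is no internal proof to compare against. I will therefore assess your proposal on its own terms.

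Your treatment of the cases $e(G)=0$ and $e(G)=2$ is fine, though for the latter you should check $[G:E(g)]<\infty$ rather than $[G:E(g)]=1$; this is decidable since $E(g)$ is quasiconvex with computable generators, so Proposition~\ref{prop:decide-fi}-style reasoning applies. You also do not really need to enumerate finite subgroups $F$: since any finite $F$ satisfies $e(G,F)=e(G)$, taking $F=\{1\}$ already suffices, and the issue is just distinguishing $e(G)=1$ from $e(G)=\infty$ once the small cases are ruled out.

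The substantive gap is in the final paragraph. You reduce everything to an ``effective bound on $r$ in terms of $|F|$ and $\delta$'' past which the count of infinite components of $\Gamma - N_r(F)$ stabilises, and you assert that this follows from the machinery in the body of the paper with $H=\{1\}$. That reduction is circular. The constants in Lemma~\ref{lem:projecting-paths} and hence in Proposition~\ref{prop:arrk-comp-corr} depend explicitly on an integer $n$ for which $\ddagger_n$ holds, and such an $n$ is only known to exist when $\partial G$ is already connected (Theorem~\ref{thm:bestvina-mess}); indeed the section preamble fixes ``$n\geq 0$ such that $\ddagger_n$ holds'' precisely because $G$ is assumed one-ended throughout Section~2. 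No computable bound $n=n(\delta,|S|)$ valid across all one-ended $\delta$-hyperbolic groups is established or used anywhere in the paper, and I am not aware of one. The paper itself sidesteps this by invoking the algorithm of Dahmani--Groves \cite{dahmani2008detecting} as a black box to decide one-endedness and produce such an $n$. So your argument either needs to supply the missing effective bound, or replace the stabilisation argument by the standard parallel-search trick (enumerate presentations of $G$ searching for a finite splitting, while concurrently running a finitely-verifiable certificate of one-endedness), which is the approach underlying the Gerasimov and Dahmani--Groves algorithms but is not something you can borrow from the present paper's quasiconvex machinery.
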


In particular, one can effectively detect finite splittings of hyperbolic groups. This result was later generalised by Diao--Feighn to finite graphs of finitely generated free groups \cite{diao2005grushko}, by Dahmani--Groves \cite{dahmani2008detecting} to relatively hyperbolic groups, and by Touikan \cite{touikan2018detecting} to finitely presented groups with a solvable word problem and no 2-torsion. Also worthy of mention is the algorithm by Jaco--Letscher--Rubinstein for computing the prime decomposition of a closed orientable 3-manifold \cite{jaco2002algorithms}, as well as the classical description of the Grushko decomposition of a one-relator groups \cite[Prop.~II.5.13]{lyndon1977combinatorial}.

Finite splittings aside, the next logical step is to detect splittings over two-ended (i.e. virtually cyclic) subgroups. This was achieved for (relatively) hyperbolic groups independently by Barrett \cite{barrett2018computing} and Touikan \cite{touikan2018detecting} using quite distinct approaches. Note that Touikan's algorithm here only applies in the torsion-free case. 

\begin{theorem}[Barrett, Touikan]
There is an algorithm which, upon input of a presentation of a hyperbolic group $G$, will decide if $G$ splits over a two-ended subgroup.
\end{theorem}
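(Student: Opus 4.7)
The plan is to reduce the problem to detecting local cut points in the Gromov boundary $\partial G$, combining Bowditch's topological characterisation of two-ended splittings with an effective combinatorial analysis of the Cayley graph.

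First I would apply Gerasimov's theorem to compute the number of ends of $G$. The finite and two-ended cases are immediate, since such groups admit no nontrivial two-ended Bass--Serre splitting (or the possible splittings can be read directly from a presentation). For infinite-ended $G$, one effectively produces a Stallings--Dunwoody graph-of-groups decomposition with finite edge groups whose vertex groups are finite or one-ended; then $G$ splits over a two-ended subgroup if and only if some vertex group does, so we may reduce to the one-ended case.

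For one-ended $G$, the central tool is Bowditch's theorem on cut points: $G$ splits over a two-ended subgroup if and only if either $G$ is virtually a closed surface group, or $\partial G$ admits a (global or local) cut point. Virtual surface groups are recognisable via the solution of the isomorphism problem for hyperbolic groups \cite{sela1995isomorphism, dahmani2011isomorphism}, so the essential task becomes that of algorithmically deciding whether $\partial G$ contains a local cut point.

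For this final step I would exploit the fact that $\partial G$ can be presented as an inverse limit of finite ``shadow'' partitions coming from balls in the Cayley graph, and that hyperbolicity --- via the Morse lemma and the finiteness of cone types --- implies that the local connectivity of $\partial G$ after removing a candidate point is already witnessed at some bounded, computable scale in $G$. Following the strategy hinted at in the abstract, one builds a finite labelled digraph whose language encodes the connectivity data of these shadows, and detects local cut points as specific recognisable combinatorial patterns, which can then be enumerated in finite time. The principal obstacle is precisely establishing this quantitative bridge: one must prove that whenever $\partial G$ admits a local cut point, its existence is detected at an \emph{a priori} bounded radius in the Cayley graph, so that the finite search is guaranteed to terminate with the correct answer. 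Converting coarse quasi-geodesic behaviour in $G$ into honest topological statements about $\partial G$ is the main technical heart of the argument.
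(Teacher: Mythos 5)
The paper does not actually prove this statement: it is quoted in the introduction as a known background result, with the proofs attributed to Barrett \cite{barrett2018computing} and, in the torsion-free case, Touikan \cite{touikan2018detecting}. Your sketch is a reasonable reconstruction of Barrett's strategy --- reduce via Gerasimov and Stallings--Dunwoody to the one-ended case, invoke Bowditch's boundary characterisation, treat virtually Fuchsian groups separately, and then detect the relevant boundary feature at a bounded, computable scale in the Cayley graph --- and this is indeed the approach that the present paper builds on and generalises in its Section 2.

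That said, the sketch glosses over several points that carry real weight. Bowditch's theorem is stated here in terms of \emph{cut pairs} rather than local cut points; the two are closely related but the precise formulation, including the careful exclusion of the circle boundary, matters before the detection step is even well-posed. Your appeal to the isomorphism problem to recognise virtually Fuchsian groups is too glib: deciding isomorphism of two given hyperbolic groups does not by itself decide membership in an infinite class, and in particular gives no termination guarantee in the negative case; one needs a separate criterion (e.g.\ detecting whether $\partial G \cong S^1$). Most importantly, the assertion that the presence of a (local) cut point or cut pair is ``witnessed at some bounded, computable scale'' is exactly the technical core of Barrett's paper --- it is the content of the annular-neighbourhood lemmas that this paper reproduces and extends (Lemmas~\ref{lem:shadow-covers}--\ref{lem:projecting-paths} and their sequel) and is nowhere near an immediate consequence of the Morse lemma or the finiteness of cone types. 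You correctly identify this as the main obstacle, but since it is left unproved the proposal is a plan for a proof rather than a proof.
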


The algorithm of Barrett, which is of particular interest to us, makes use of Bowditch's deep theorem on two-ended splittings of hyperbolic groups \cite{bowditch1998cut}. This theorem states that a one-ended hyperbolic group $G$ which is not virtually Fuchsian will admit such a splitting if and only if $\partial G$ contains a cut pair. 
In fact, Barrett applies this result to effectively construct Bowditch's canonical JSJ decomposition of a hyperbolic group.

In this paper we will aim to expand on the techniques of \cite{barrett2018computing}, and apply them to larger splittings. We will restrict our attention to \textit{quasiconvex} subgroups, i.e. those subgroups whose inclusion maps are quasi-isometric embeddings, since distorted subgroups exhibit global geometry which is harder to understand on a local scale. 
If a group $G$ splits over a subgroup commensurable with $H$, we say $H$ is \textit{associated with a splitting}. Finding sufficient conditions for a subgroup to be associated to a splitting is a problem which has received a great amount of interest (e.g. \cite{scott1998symmetry, scott2000splittings, niblo2002singularity, scott2003regular, niblo2005minimal}). Applying the results of \cite{niblo2005minimal} to the setting of quasiconvex subgroups of hyperbolic groups, we are able to prove the following decidability result. 

\begin{theorem}[cf.~\ref{cor:detect-split-rf}]
There is an algorithm which takes in as input a one-ended hyperbolic group $G$ and generators of a quasiconvex, residually finite subgroup $H$. 
This algorithm will then decide if $H$ is associated with a splitting, and will output such a splitting if one exists. 
\end{theorem}

It is possible to somewhat weaken the residual finiteness assumption placed on $H$ in the theorem above and give a more general (but more involved) result. We will postpone this more technical statement until Section~\ref{sec:spltting-algs} (see Theorems~\ref{thm:decide-splitting-finitecoends},~\ref{thm:decide-splittings-lonely}). 

\medskip




In light of Stallings' Theorem, it is a natural generalisation to define the number of ends of a pair of groups $(G,H)$ where $H \leq G$. This definition was first introduced by Houghton \cite{houghton1974ends} and later explored in more depth in the context of discrete groups by Scott \cite{scott1977ends}. The number of ends of the pair $(G,H)$, denoted $e(G,H)$, can be identified with the number of geometric ends of the quotient of the Cayley graph of $G$ by the left action of $H$. This quotient graph is sometimes called the \textit{coset graph} or \textit{Schreier graph} of $H$. It is not hard to show that if $G$ splits over $H$ then $e(G,H) \geq 2$, but the converse does not hold. Our methods give a new proof of the following theorem, originally due to Vonseel \cite{vonseel2018ends}. 

\begin{theorem}[Vonseel, cf.~\ref{thm:decide-coends-scott}]
There is an algorithm which, upon input of a one-ended hyperbolic group $G$ and generators of a quasiconvex subgroup $H$, will output $e(G,H)$. 
\end{theorem}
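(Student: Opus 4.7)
The plan is to identify $e(G,H)$ with the number of $H$-orbits on $\pi_0(\partial G - \Lambda H)$ and then extract this quantity combinatorially from the finite labelled digraph machinery developed earlier in the paper.

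First I would dispose of the easy cases: if $H$ is finite then $\Lambda H = \emptyset$, so $\partial G - \Lambda H = \partial G$ is connected (since $G$ is one-ended hyperbolic), giving $e(G,H) = 1$; if $[G:H] < \infty$ then the Schreier coset graph is finite and $e(G,H) = 0$. Both alternatives are detectable from the input, so assume $H$ is infinite and of infinite index, whence $\Lambda H$ is a proper nonempty subset of $\partial G$. Under this hypothesis I would appeal to the now-classical identification (Scott's relative ends, as refined to the quasiconvex setting by Swarup and others) between ends of the Schreier graph and $H$-orbits of connected components of $\partial G - \Lambda H$. This reduces the problem to enumerating such orbits.

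To enumerate them, I would construct a finite, $H$-equivariant labelled digraph $\Gamma$ of the sort produced in the earlier sections, whose infinite walks describe geodesic rays in $\Cay(G)$ with limits in $\partial G - \Lambda H$, and with two walks eventually lying at a common vertex iff their limit points lie in the same component of $\partial G - \Lambda H$. The $H$-action on $\pi_0(\partial G - \Lambda H)$ then descends to a finite-state rule on $\Gamma$, and the number of orbits can be read off as, for example, the number of equivalence classes of terminal strongly connected components of $\Gamma$ under the induced action.

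The main obstacle is that $\pi_0(\partial G - \Lambda H)$ may itself be infinite, in which case one must argue that the $H$-action nonetheless groups the components into finitely many algorithmically detectable orbits. Quasiconvexity of $H$ is essential on two fronts: it makes the shadow of $H$ and its complement constructible in the finite-state formalism, and it ensures that orbit-equivalence of components is witnessed by a finite-state condition on the labels of $\Gamma$. Once both are in place, counting orbits becomes a routine finite combinatorial operation on $\Gamma$, yielding $e(G,H)$.
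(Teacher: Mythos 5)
Your reduction of $e(G,H)$ to the number of $H$-orbits on $\pi_0(\partial G - \Lambda H)$ is exactly the paper's Theorem~\ref{thm:hyp-endsofpairs}, so the target quantity is identified correctly. However, the digraph you propose does not correspond to the construction the paper actually uses, and the step you flag as ``the main obstacle'' is precisely the lemma you would still need to prove.

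The paper's digraph $\Delta_F$ is \emph{not} an automaton whose infinite walks model geodesic rays converging into $\partial G - \Lambda H$. Its vertex set is the finite set $\pi_0(F)$ of components of a compact fundamental domain $F$ for the $H$-action on $A_{r,R,K}(H)$, and a directed edge labelled $s\in H$ connects $v$ to $v'$ whenever $v \cap sv' \neq \emptyset$. The quotient by $H$ is thus built into $\Delta_F$ from the start: Lemma~\ref{lem:comp-is-coends} shows that the ordinary (undirected) connected components of this finite graph are in bijection with $H$-orbits of components of $A_{r,R,K}(H)$, and hence, via Proposition~\ref{prop:arrk-comp-corr}, with $H$-orbits of components of $\partial G - \Lambda H$. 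No separate ``$H$-action on the digraph'' is introduced, and the notion of ``terminal strongly connected components under an induced action'' has no analogue here --- $\Delta_F$ is symmetric (edges come in inverse pairs), so every connected component is trivially strongly connected.

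Your proposal instead tries to build a finite digraph that encodes $\pi_0(\partial G - \Lambda H)$ directly (``two walks eventually lying at a common vertex iff their limit points lie in the same component'') and then imposes an $H$-action afterwards. Since $\pi_0(\partial G - \Lambda H)$ can be infinite, a finite digraph cannot encode it in this way; indeed, the paper points out later that even deciding finiteness of $\pi_0(\partial G - \Lambda H)$ is out of reach in general. You correctly identify this as the obstruction and ask for the $H$-action to ``group the components into finitely many algorithmically detectable orbits'' --- but that is exactly the content of Lemma~\ref{lem:comp-is-coends}, which your sketch neither states nor supplies an argument for. Filling this gap amounts to proving that lemma (which in turn rests on Lemma~\ref{lem:adj-digraph}, showing that paths in $\Delta_F$ from $v$ to $v'$ labelled $h$ correspond precisely to $v$ and $hv'$ lying in the same component of $HF$). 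The preliminary case split on $H$ finite or of finite index is unnecessary: the adjacency digraph handles those cases uniformly.
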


There is a competing notion of ``ends'' of a pair of a groups which goes by several names in the literature. This idea was considered independently by Bowditch \cite{bowditch2002splittings}, Kropholler--Roller \cite{kropholler1989relative}, and Geoghegan \cite{geoghegan2007topological}, who refer to this invariant as \textit{coends}, \textit{relative ends}, and \textit{filtered ends} respectively. See \cite[ch.~2]{scott2003regular} for a discussion on the equivalence of these three definitions. In this paper we will adopt the terminology and notation of Geoghegan, and denote the number of filtered ends of the pair $(G,H)$ by $\tilde e(G,H)$. This value appears to be more resilient to calculation without extra hypotheses, but nonetheless we have some partial results. Recall that the \textit{generalised word problem} for a finitely generated group $H$ is the problem of, given words $w_0, \ldots w_n$ in the generators of $H$, deciding whether $w_0 \in \langle w_1, \ldots w_n\rangle_H$. We then have the following statement. 

\begin{theorem}[cf.~\ref{thm:decide-limsetcomp-discon},~\ref{thm:decide-filtered-ends-finite}]
There is an algorithm which takes in as input a one-ended hyperbolic group $G$, and generators of a quasiconvex subgroup $H$. This algorithm will terminate if and only if $\tilde e(G,H)$ is finite, and if it terminates will output the value of $\tilde e(G,H)$. 

Furthermore, if one is also given a solution to the generalised word problem for $H$, then there is an algorithm which decides whether $\tilde e(G,H) \geq N$ for any given $N \geq 0$. 
\end{theorem}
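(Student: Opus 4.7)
The strategy is to extend the labelled digraph framework already developed in the paper to obtain a finite automaton $\Gamma_H$ whose accepted infinite words, modulo fellow-traveller equivalence on geodesic rays, parametrise precisely $\partial G \setminus \Lambda H$, and to equip this automaton with an effectively recognisable ``same-component'' relation witnessing when two accepted rays converge into the same connected component of $\partial G - \Lambda H$. Using the identification $\tilde e(G,H) = |\pi_0(\partial G - \Lambda H)|$ that underpins the paper, both claims become statements about enumerating equivalence classes of this relation.

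For the first claim (semi-decision of finiteness), I would run two procedures in parallel. The first enumerates lower bounds: for each $k = 1, 2, \dots$ it searches for $k$-tuples of labelled infinite paths in $\Gamma_H$ pairwise separated by an explicit connectivity certificate, producing witnesses that $\tilde e(G,H) \ge k$. The second enumerates upper bounds by approximating $\Gamma_H$ with finite subautomata $\Gamma_H^{(n)}$ and looking for a finite \emph{closure certificate} --- built, for instance, from the strongly connected components of $\Gamma_H$ that carry infinite accepted rays, together with the partition of these by the same-component relation --- guaranteeing that every further accepted ray falls into one of a prescribed finite collection of components. When the lower and upper counts agree at some value $k$, we halt and output $k$. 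If $\tilde e(G,H) = \infty$ no such closure certificate exists, so the upper-bound process never terminates and the overall algorithm runs forever, as required.

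For the second claim (deciding $\tilde e(G,H) \ge N$ given the generalised word problem for $H$), the crucial gain from the generalised word problem is an effective membership test for $\Lambda H$: by quasiconvexity, a boundary point lies in $\Lambda H$ iff an approximating word sequence stays within bounded neighbourhoods of cosets $gH$, and this can be checked by enumerating $g \in G$ and applying the generalised word problem to decide coset membership. Feeding this oracle into the shadow machinery of $\Gamma_H$, we can decide, for any accepted path, whether its limit lies in $\Lambda H$ or in $\partial G \setminus \Lambda H$. The $\ge N$ problem then reduces to searching over $N$-tuples of paths in a finite automaton and checking pairwise non-connectedness; an a priori bound on the required search depth, depending only on $|\Gamma_H^{(n)}|$ and $N$, forces termination in both the ``yes'' and ``no'' cases.

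The principal obstacle is designing the correct closure certificate in the first claim. Without a membership oracle for $\Lambda H$, it is not clear how to decide whether a given strongly connected subautomaton of $\Gamma_H$ contributes a genuinely new component of $\partial G - \Lambda H$, a limit point inside $\Lambda H$, or merely further decoration of an already-counted component. The bulk of the work, I expect, lies in showing that hyperbolicity and quasiconvexity force the component structure of $\partial G - \Lambda H$ to be detectable from finitely much local data on $\Gamma_H$ precisely when $\tilde e(G,H)$ is finite, so that the existence of a closure certificate coincides exactly with finiteness.
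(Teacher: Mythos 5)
Your proposal points in the right general direction---using the digraph/automaton structure coming from a finite $H$-cover of the annular region $A_{r,R,K}(H)$, and identifying $\tilde e(G,H)$ with the number of components---but it has two genuine gaps that the paper's proof resolves and yours leaves open.

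First, you identify ``designing the correct closure certificate'' as the principal obstacle for the semi-decision procedure and do not resolve it; this is precisely where the work lies, and the paper's solution is quite concrete. Taking an exhaustion $F_0 \subset F_1 \subset \ldots$ of $A_{r,R,K}(H)$ by finite $H$-covers (where $F_{i+1} = YF_i \cup F_i$ for a generating set $Y$ of $H$), the certificate of finiteness is simply: an index $i$ such that every component of $YF_i$ can be joined to $F_i$ by a path inside $A_{r,R,K}(H)$. This is a finite search, and a straightforward induction shows that when it succeeds every component of the whole infinite graph meets $F_i$; conversely such an $i$ exists whenever there are finitely many components. Your ``strongly connected component / closure certificate'' language never arrives at this finite, checkable condition, so the first claim remains unproved in your version.

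Second, the role of the generalised word problem is misidentified. It is not used to decide membership of a boundary point in $\Lambda H$ (indeed membership in $H$, hence in cosets $gH$, is already decidable by quasiconvexity without any extra oracle). Rather, given the adjacency digraph $\Delta_F$, Lemma~\ref{lem:image-of-language} produces a finitely generated subgroup $K_v \le H$ and a finite set $T_{v,v'} \subset H$ such that two components $v, v'$ of $F$ lie in the same component of $A_{r,R,K}(H)$ iff $T_{v,v'} \cap K_v \ne \emptyset$; this is exactly an instance of the generalised word problem \emph{in $H$}. That is what lets one compute the counts $N_i$ of components meeting $F_i$ (Proposition~\ref{prop:distinct-arrk-gwp}), and the $\tilde e(G,H) \ge N$ decision then follows by observing that the sequence $N_i$ is non-decreasing and stabilises. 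For outputting the exact value in the first claim, the paper sidesteps the need for the generalised word problem entirely by noting (Lemma~\ref{lem:fin-components}) that finiteness of $\tilde e(G,H)$ forces each $K_v$ to have finite index in $H$, hence to be quasiconvex with decidable membership --- a mechanism your proposal does not capture.
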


We remark in Section~\ref{sec:ends-revisited} that $\tilde e(G,H)$ can be identified with the number of components of $\partial G - \Lambda H$. Thus, the above algorithm allows us to decide if $\partial G - \Lambda H$ is disconnected. 
It's also worth noting that if we know a priori that $\tilde e(G,H)$ is finite, for example if $H$ is two-ended, then $\tilde e(G,H)$ is fully computable. We will also see that the value of $\tilde e(G,H)$ is computable if $H$ is free. It does not seem possible to decide in general if $\tilde e(G,H) = \infty$ using our machinery for an arbitrary quasiconvex subgroup, without assuming further hypotheses. We discuss this limitation in Section~\ref{sec:counting-ends}.

\subsection*{Acknowledgements}

My thanks go to Panos Papazoglou for suggesting this problem, and for many helpful discussions. I'm also grateful to Sam Hughes and Ric Wade for their detailed feedback, to Michah Sageev and Henry Wilton for fruitful exchanges, and to Thomas Delzant for pointing me towards Vonseel's work. Finally, I thank the referees for their helpful suggestions.

\section{Preliminaries}

In this section we recall the basic notions and tools we require. We begin with a look at almost invariant sets and (filtered) ends of pairs, before briefly turning towards hyperbolic groups and their quasiconvex splittings. Throughout this paper we will assume a working knowledge of Bass--Serre theory, a good reference for which is \cite{serre2002trees}. 

\subsection{Almost invariant subsets and (filtered) ends}

We will need the idea of an almost invariant subset. A very good introduction to the upcoming definitions can be found in \cite{scott1998symmetry}, which features many helpful examples. The reader should note however that this paper contains an error, a correction of which can be found in \cite{scott2003regular}.

First, some notation. In what follows, $G$ will be a finitely generated group and $H \leq G$ a finitely generated subgroup. If $Z$ is a set upon which $H$ acts on the left, then denote by $H \backslash Z$ the quotient of $Z$ by this action. 

\begin{definition}
Let $U$ and $V$ be two sets. Denote by $U \triangle V$ the \textit{symmetric difference} of $U$ and $V$, defined as 
$$
U \triangle V := (U - V) \cup (V - U).
$$
We say that two sets $U$, $V$ are \textit{almost equal} if $U \triangle V$ is finite. 
\end{definition}

\begin{definition}
Let $G$ act on the right on a set $Z$. We say that $U \subset Z$ is \textit{almost invariant} if for all $g \in G$, $Ug$ is almost equal to $U$.
\end{definition}

\begin{definition}
We say a subset $U\subset G$ is \textit{$H$-finite} or \textit{small}, if $U$ projects to a finite subset of $H \backslash G$. If $U$ is not $H$-finite, then we say $U$ is \textit{$H$-infinite}, or \textit{large}. 
\end{definition}

To ease notation, given a subset $X \subset G$ we will write $X^\ast := G - X$. 

\begin{definition}
We say that a subset $X \subset G$ is $H$-\textit{almost invariant} if it is invariant under the left action of $H$, and $H\backslash X$ is almost invariant under the right action of $G$ on $H \backslash G$. 
We say that $X$ is \textit{non-trivial} if both $X$ and $X^\ast$ are $H$-infinite. 

Let $X$ and $Y$ be two non-trivial $H$-almost invariant subsets of $G$. We say that $X$ and $Y$ are \textit{equivalent}, if $X \triangle Y$ is $H$-finite.
\end{definition}

\begin{definition}\label{def:crossings}
Let $X$ be an $H$-almost invariant subset. Given $g \in G$, we say that $gX$ \textit{crosses} $X$ if all of 
$$
gX \cap X, \ \ gX \cap X^\ast,  \ \ gX^\ast \cap X,  \ \ gX^\ast \cap X^\ast
$$ 
are large. 
If there exists $g \in G$ such that $gX$ crosses $X$ then we say that $X$ \textit{crosses itself}.
If $X$ does not cross itself, we say it is \textit{almost nested}. If one of the above intersections is empty, we say $X$ is \textit{nested}. 
\end{definition}

It is easy to see that if $X$ and $Y$ are equivalent $H$-almost invariant sets, then $X$ crosses itself if and only if $Y$ crosses itself. 

\begin{example}\label{eg:splitting-nested}
Suppose a group $G$ splits as an amalgam or HNN extension over a subgroup $H$. Then one can construct a non-trivial nested $H$-almost invariant subset $X \subset G$ as follows. Let $T$ be the Bass-Serre tree of this splitting. We now construct a $G$-equivariant map $\phi : G \to VT$, where $VT$ denotes the set of vertices of $T$. Given $1 \in G$, set $\phi(1)$ arbitrarily to some $ w\in VT$. For each $g \in G$, set $\phi(g) = g\phi(1)$. Since $G$ acts upon itself freely and transitively, $\phi$ is well defined for all $g \in G$. 

Given $\phi$ as above, let $e \in ET$ be the edge stabilised by $H$ with endpoints $u, v$. Deleting the interior of $e$ separates $T$ into two components, $T_u$ and $T_v$ containing $u$ and $v$ respectively. Set $X = \phi^{-1} (VT_u)$, then it is a simple exercise to check that $X$ is a non-trivial nested $H$-almost invariant subset of $G$. 
\end{example}

We can now state the following key theorem due to Scott--Swarup \cite{scott2000splittings}, which is in some sense a converse to Example~\ref{eg:splitting-nested}.  Recall that a subgroup $H$ of $G$ is said to be \textit{associated to a splitting} if $G$ splits over a subgroup commensurable with $H$.

\begin{theorem}[{\cite[Thm.~2.8]{scott2000splittings}}]\label{thm:almost-nest-split}
Let $G$ be a finitely generated group, $H$ a finitely generated subgroup, and $X$ an $H$-almost invariant subset of $G$. Suppose that $X$ is almost nested, then $H$ is associated to a splitting. 
\end{theorem}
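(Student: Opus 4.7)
The plan is to convert the almost-nested family of $G$-translates of $X$ into a genuine $G$-tree, and then to identify an edge stabilizer that is commensurable with $H$; the splitting of $G$ will then follow from Bass–Serre theory applied to this tree.

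First, I would consider the family $\mathcal{E} := \{gX, gX^\ast : g \in G\}$ modulo the equivalence relation $A \sim B$ iff $A \triangle B$ is $H$-finite. The almost-nestedness of $X$, together with the observation following Definition~\ref{def:crossings} that crossing is invariant under the equivalence of $H$-almost invariant sets, implies that for any two classes $[A], [B]$, at least one of the four intersections $A \cap B$, $A \cap B^\ast$, $A^\ast \cap B$, $A^\ast \cap B^\ast$ is $H$-finite. In other words, modulo small perturbations, the $G$-translates of $X$ form a \emph{nested} $G$-invariant family of subsets of $G$.

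Second, I would appeal to Dunwoody's classical construction (in the form developed in Dicks–Dunwoody's \emph{Groups acting on graphs}) which turns such a nested $G$-invariant family into a simplicial $G$-tree $T$, acted upon without inversions. Edges of $T$ correspond to $\sim$-classes of oriented elements of $\mathcal{E}$, and vertices to maximal coherent orientations of them. The non-triviality of $X$ — both $X$ and $X^\ast$ being $H$-infinite — guarantees that the edge $e$ associated to the class of $X$ genuinely separates $T$ into two non-empty half-trees lying in distinct $G$-orbits, so the action $G \actson T$ has no global fixed point.

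The final step is to analyse the stabilizer $K := G_e$. By $H$-almost invariance, $H \leq K$; the main task is to show that $[K:H] < \infty$. This uses the $H$-finiteness of $gX \triangle X$ for each $g \in K$ to constrain the left action of $K$ on $H \backslash G$, and plays this finiteness off against the $H$-infiniteness of both $X$ and $X^\ast$ to bound $|K/H|$. Once this is established, Bass–Serre theory applied to $G \actson T$ exhibits $G$ as a non-trivial splitting over a conjugate of $K$, a subgroup commensurable with $H$; that is, $H$ is associated to a splitting as required.

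The main obstacle is precisely the commensurability statement $[K : H] < \infty$. Producing a splitting over \emph{some} subgroup containing $H$ is the routine part once the tree is in hand; the delicate point is to prevent $K$ from being an arbitrarily large subgroup of the commensurator of $H$, and this forces a careful quantitative comparison between the size of the symmetric differences $gX \triangle X$ and the $H$-infinite structure of $X$ itself.
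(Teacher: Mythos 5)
The paper offers no proof of this theorem: it is imported as a black box from Scott--Swarup \cite{scott2000splittings}, so there is no in-paper argument to compare against. Evaluating your outline on its own terms, you identify the right two milestones --- build a $G$-tree from the $G$-translates of $X$, then show the stabilizer of the edge corresponding to $X$ is commensurable with $H$ --- but both nontrivial steps are asserted rather than carried out, and the first assertion as stated is not a reduction of the problem but essentially the problem itself.

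The serious gap is the passage from ``almost nested'' to ``nested.'' You observe (correctly) that almost-nestedness forces, for every pair of translates, one of the four corners to be small, and then write that ``modulo small perturbations'' the translates become a genuinely nested $G$-invariant family to which Dunwoody's tree-set construction applies. But there is no a priori way to replace $X$ by an equivalent $X'$ whose $G$-orbit is literally nested: the small corner for the pair $(g_1X, g_2X)$ is small relative to a conjugate of $H$ depending on $g_1$ (not simply $H$-finite), and the modifications needed to empty these corners must be made $G$-equivariantly and coherently across infinitely many pairs at once. Scott and Swarup's actual argument handles this by analysing the structure one gets from the translates (in the spirit of the Sageev cubing construction) rather than by literal perturbation, and that analysis is the content of the proof. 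The second gap is the one you flag yourself: the commensurability $[K:H]<\infty$ of the edge stabilizer. This is not a routine ``quantitative comparison''; it is precisely where the nontriviality of $X$ (both $X$ and $X^\ast$ being $H$-infinite) enters, and it requires its own argument. In short, your sketch names the ingredients of the known proof but supplies neither of the two ideas that make it go.
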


There is a generalisation of the above, which will be important to us. Firstly, we must further loosen our requirements for nesting. Denote by $\Comm_G(H)$ the commensurator of $H$ in $G$. That is, 
$$
\Comm_G(H) = \{g \in G : |H : H\cap H^g| < \infty, \ |H^g : H\cap H^g| < \infty\}.
$$
Then we have the following definition.

\begin{definition}
Let $X \subset G$ be $H$-almost invariant. We say that $X$ is \textit{semi-nested} if $\{ g \in G  : \textrm{$gX$ crosses $X$} \}$ is contained in $\Comm_G(H)$. 
\end{definition}

Informally, we relax our definition to allow crossings of $X$ by $gX$ on the condition that $gH$ is ``very close'' to $H$. 
We then have the following useful result, due to Niblo--Sageev--Scott--Swarup \cite{niblo2005minimal}, which says that this relaxation still produces splittings. 

\begin{theorem}[{\cite[Thm.~4.2]{niblo2005minimal}}]\label{thm:nsss}
Let $G$ be a finitely generated group and $H$ a finitely generated subgroup. Suppose that there exists a non-trivial $H$-almost invariant subset $X \subset G$ which is semi-nested. Then $G$ splits over a subgroup commensurable with $H$. 
\end{theorem}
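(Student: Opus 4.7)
The plan is to reduce this to the strictly almost-nested case already handled by Theorem~\ref{thm:almost-nest-split}. By hypothesis, the ``crossing set'' $S_X := \{g \in G : gX \textrm{ crosses } X\}$ lies inside $\Comm_G(H)$. Before anything else I would dispose of the degenerate case $\Comm_G(H) = H$: any $g \in S_X$ would then lie in $H$ and hence satisfy $gX = X$ (since $X$ is left $H$-invariant), contradicting the definition of a crossing. So in this case $S_X = \emptyset$, meaning $X$ is already almost nested, and Theorem~\ref{thm:almost-nest-split} applies directly to produce the required splitting.

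In general $\Comm_G(H)$ may be strictly larger than $H$, and I would approach the remaining crossings in one of two complementary ways. The first is to modify $X$ within its $G$-orbit by standard nesting moves: whenever $gX$ crosses $X$ for some $g \in \Comm_G(H)$, replace one of the crossing pair by $X \cap gX$ or $X \cup gX$, each of which is a nested alteration. Because $g \in \Comm_G(H)$, the intersection $H \cap H^g$ has finite index in $H$, so the modified set remains $H$-almost invariant up to a controlled change on finitely many $H$-cosets. When $S_X$ is finite this yields an almost-nested representative in finitely many steps and one is immediately in the setting of Theorem~\ref{thm:almost-nest-split}. When $S_X$ is infinite, a more global construction is needed: consider the family of all translates $\{gX, gX^* : g \in G\}$ under the partial order of almost-inclusion and apply the pretree/cubing machinery of Dunwoody--Roller and Sageev to extract a $G$-action on a tree. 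Semi-nesting is precisely what keeps this combinatorial object one-dimensional, since crossings of hyperplanes only arise among $\Comm_G(H)$-translates and can be collapsed.

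The hard step, which I expect to be the main obstacle, is to certify that the edge stabilisers of the resulting tree are commensurable with $H$ rather than with the potentially much larger group $\Comm_G(H)$. The wall associated with $X$ has stabiliser containing $H$ and contained in $\Comm_G(H)$; the required commensurability should follow from a careful bookkeeping argument combining semi-nesting with the fact that each $H \cap H^g$ for $g \in \Comm_G(H)$ has finite index in $H$, so that any element stabilising the wall up to a small symmetric difference must normalise $H$ up to finite index. Once the stabiliser is pinned down, applying Bass--Serre theory to the resulting minimal $G$-tree delivers a splitting of $G$ over a subgroup commensurable with $H$, as required.
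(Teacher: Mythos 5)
The statement you are asked to prove is, in the paper, not proved at all: it is cited verbatim as \cite[Thm.~4.2]{niblo2005minimal} and the paper simply invokes it. So there is no in-paper argument to compare your proposal against; you are effectively reconstructing the Niblo--Sageev--Scott--Swarup proof.

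Your observation in the degenerate case $\Comm_G(H) = H$ is correct and clean: for $h \in H$ one has $hX = X$ so $hX \cap X^{\ast} = \emptyset$ and there is no crossing, hence $X$ is already almost nested and Theorem~\ref{thm:almost-nest-split} finishes. Beyond that, however, the proposal has real gaps. Your ``nesting moves'' approach --- replacing $X$ by $X \cap gX$ or $X \cup gX$ --- does not clearly stay within the class of $H$-almost invariant sets: $gX$ is $H^{g}$-almost invariant, and even when $H \cap H^{g}$ has finite index in $H$ you would at best get a set that is $(H \cap H^{g})$-almost invariant, which changes the subgroup in question. Moreover nesting moves can introduce \emph{new} crossings, so the claim that finitely many crossing elements implies termination in finitely many steps is unjustified. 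The pivot to ``the Sageev/Dunwoody--Roller cubing machinery'' for the infinite case is only a pointer, not an argument: it is precisely the content of the NSSS theorem that semi-nesting collapses the cubing to a tree and controls the edge stabilisers, and you have not shown either. Finally, you explicitly acknowledge the crucial step --- certifying that the edge stabiliser is commensurable with $H$ rather than merely sitting between $H$ and $\Comm_G(H)$ --- as something you ``expect'' to follow from bookkeeping; in the absence of the hyperbolic/quasiconvex hypothesis (the theorem is stated for arbitrary finitely generated $G$ and $H$), $H$ need not have finite index in $\Comm_G(H)$, so this is not a small gap but the heart of the matter. As written, the proposal is a plausible outline but not a proof; the theorem should be cited, as the paper does, or proved by genuinely reproducing the NSSS argument.
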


This idea of ``crossings'' of almost invariant sets is much more rich than what is presented here, and pertains to the idea of ``compatible'' splittings. 
The interested reader should consult \cite{scott2000splittings}, which features many helpful examples, as a starting point.
\medskip

There is a natural way to ``count'' these $H$-almost invariant subsets, which provides a useful integer invariant of the subgroup.  
Let $\mathcal P(H\backslash G)$ denote the power set of $H\backslash G$. Let $\mathcal F(H \backslash G)$, denote the set of finite subsets. Under the operation of symmetric difference $\triangle$, $\mathcal P(H \backslash G)$ can be seen as a $\Z_2$-vector space, and $\mathcal F( H \backslash G)$ a subspace. The quotient space $\mathcal E(H \backslash G) := \mathcal P(H\backslash G) / \mathcal F(H \backslash G)$ can be identified naturally with the set of $H$-almost invariant sets of $G$, modulo equivalence. 

\begin{definition}
Let $G$ be a group and $H \leq G$. We define the \textit{number of ends of the pair $(G,H)$} as the rank of $\mathcal E (H \backslash G)$ as a $\Z_2$-vector space. 
Denote by $e(G) = e(G,\{1\})$, and say that $e(G)$ is the \textit{number of ends of $G$}. 
\end{definition}

There is also the following characterisation of ends of pairs, which will be helpful later. This result motivates the earlier description that $e(G,H)$ ``counts'' $H$-almost invariant subsets. 

\begin{proposition}[{\cite[Lem.~1.6]{scott1977ends}}]
Let $G$ be a group, $H$ a subgroup, and $n \geq 0$. Then $e(G,H) \geq n$ if and only if there exists a collection of $n$ pairwise disjoint $H$-almost invariant subsets of $G$.
\end{proposition}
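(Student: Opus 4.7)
The plan is to verify each direction separately, freely translating between the set-theoretic language of disjoint subsets and the algebraic language of linearly independent classes in $\mathcal{E}(H\backslash G)$. Throughout, I would exploit the key observation that the $H$-almost invariant subsets of $G$ form a Boolean subalgebra (modulo $H$-finite sets): finite unions, finite intersections, and complements all preserve $H$-almost invariance. For intersections this follows from the containment
\[
(A \cap B)g \, \triangle \, (A \cap B) \subseteq (Ag \triangle A) \cup (Bg \triangle B),
\]
whose right-hand side is $H$-finite whenever $A$ and $B$ are $H$-almost invariant.

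For the forward direction, given $n$ pairwise disjoint non-trivial $H$-almost invariant subsets $X_1,\ldots,X_n$, I would argue that their classes $[X_1],\ldots,[X_n] \in \mathcal{E}(H\backslash G)$ are linearly independent, which gives $e(G,H)\geq n$ by definition. Suppose for contradiction that $\sum_{i\in S}[X_i]=0$ for some nonempty $S$. Since the $X_i$ are pairwise disjoint and left $H$-invariant, their images in $H \backslash G$ are pairwise disjoint as well, so the symmetric difference collapses to an actual disjoint union: $\triangle_{i\in S} X_i = \bigsqcup_{i\in S}X_i$. This set then projects to a finite subset of $H\backslash G$, forcing each $H\backslash X_i$ to be finite and contradicting non-triviality.

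For the reverse direction, suppose $e(G,H) \geq n$ and fix classes $[Y_1],\ldots,[Y_n]$ spanning an $n$-dimensional subspace of $\mathcal{E}(H\backslash G)$. I would then refine the $Y_i$ into pairwise disjoint representatives via the atoms of the finite Boolean algebra they generate. Explicitly, for each $\sigma\in\{0,1\}^n$ set $A_\sigma := \bigcap_{i=1}^n Y_i^{\sigma(i)}$, with the convention $Y_i^1=Y_i$ and $Y_i^0 = G - Y_i$. By the preliminary observation each $A_\sigma$ is $H$-almost invariant; the $A_\sigma$ visibly partition $G$; and each $Y_i$ is the disjoint union of those $A_\sigma$ with $\sigma(i)=1$. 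Letting $I$ index the $H$-infinite atoms, the classes $\{[A_\sigma]\}_{\sigma\in I}$ are linearly independent by the same symmetric-difference argument as in the forward direction, and they span a subspace containing each $[Y_i]$, so $|I|\geq n$. Picking any $n$ atoms from $I$ produces the required collection.

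The main subtlety is ensuring the atomic subsets remain non-trivial. When $n\geq 2$ this is automatic: for any chosen atom $A_\sigma$ the complement $A_\sigma^c$ contains at least one further $H$-infinite atom and is therefore itself $H$-infinite. The case $n\leq 1$ is handled separately, reducing to the standing assumption $[G:H]=\infty$ so that the "improper" class $[G]$ admits a non-trivial refinement, or else choosing between $Y_1$ and $G-Y_1$ directly. The whole argument hinges on the Boolean closure properties noted at the outset, which let one move freely between almost invariant sets and their atoms without ever leaving $\mathcal{E}(H\backslash G)$.
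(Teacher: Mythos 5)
Your core strategy --- closing the $H$-almost invariant sets under Boolean operations, passing to the atoms $A_\sigma$ of the algebra generated by representatives, and matching pairwise disjoint sets with linearly independent classes in $\mathcal{E}(H\backslash G)$ --- is the standard proof of Scott's Lemma 1.6, and both directions of the argument are carried out correctly for $n\geq 2$. The observation that disjoint left $H$-invariant sets descend to disjoint subsets of $H\backslash G$, so symmetric difference collapses to a disjoint union, is exactly the right mechanism, and the dimension count via the $H$-infinite atoms is sound. (The paper does not prove this proposition; it cites Scott, so there is no paper proof to compare against.)

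The gap is in your choice of non-degeneracy condition and the resulting treatment of $n\leq 1$. You require each of the $n$ sets to be non-trivial (both the set and its complement $H$-infinite), whereas Scott's lemma only asks that each set be $H$-infinite. For pairwise disjoint collections with $n\geq 2$ the two conditions coincide, as you note, but for $n=1$ they diverge, and the non-trivial version of the statement is actually false: if $e(G,H)=1$ --- which occurs precisely when $[G:H]=\infty$ yet every $H$-almost invariant subset or its complement is $H$-finite --- there is simply no non-trivial $H$-almost invariant set to exhibit, so your assertion that ``the improper class $[G]$ admits a non-trivial refinement'' is exactly what fails in that situation, and ``choosing between $Y_1$ and $G-Y_1$'' does not help either. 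With the correct $H$-infinite hypothesis the degenerate case evaporates: for $n=1$ take $X_1=G$, which is $H$-infinite iff $[G:H]=\infty$ iff $e(G,H)\geq 1$. (The statement as quoted in the paper omits any qualifier at all, which cannot be intended either, since $\emptyset$ and $G$ are always two disjoint $H$-almost invariant subsets.) Replace ``non-trivial'' by ``$H$-infinite'' throughout and drop the $n\leq 1$ workaround, and your proof is complete.
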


There is a more geometric intuition for the above, which can be seen in the coset graph. 

\begin{proposition}[{\cite[Lem.~1.1]{scott1977ends}}]\label{prop:coset-ends}
Let $\Gamma$ be a Cayley graph of $G$, and $H \subset G$. Then $e(G,H)$ is equal to the number of ends of the coset graph $H \backslash \Gamma$. 
\end{proposition}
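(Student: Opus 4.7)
The plan is to construct a $\Z_2$-linear isomorphism between $\mathcal E(H\backslash G)$ and a standard algebraic invariant of the coset graph $\Gamma_H := H\backslash \Gamma$, and then invoke the classical identification of that invariant with the topological count of ends.

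Fix a finite symmetric generating set $S$ so that $\Gamma = \Cay(G,S)$; since $S$ is finite, $\Gamma_H$ is locally finite. For $U \subset V\Gamma_H$, write $\delta U$ for its edge coboundary in $\Gamma_H$, set $\mathcal B(\Gamma_H) := \{U \subset V\Gamma_H : |\delta U| < \infty\}$, and let $\mathcal F(\Gamma_H)$ denote the finite subsets. A classical fact about ends of locally finite connected graphs (essentially going back to Freudenthal) states that $\dim_{\Z_2}\bigl(\mathcal B(\Gamma_H)/\mathcal F(\Gamma_H)\bigr) = e(\Gamma_H)$, with each equivalence class determined by which ends it captures. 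I will take this statement as given.

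Let $\pi : G \to H\backslash G$ denote the projection. The key step is to show that, for any left-$H$-invariant subset $X \subset G$, $X$ is $H$-almost invariant (under the right $G$-action) if and only if $\pi(X) \in \mathcal B(\Gamma_H)$. For the forward direction, for each $s \in S$ an edge of $\Gamma_H$ labelled $s$ lies in $\delta\pi(X)$ precisely when its tail $Hg$ lies in $\pi(X) \triangle \pi(X)s^{-1}$; these are finite in number since this symmetric difference equals $H\backslash(X \triangle Xs^{-1})$, which is finite by hypothesis. Summing over the finite set $S$ bounds $|\delta\pi(X)| < \infty$. Conversely, if $\delta\pi(X)$ is finite then so is each $\pi(X)s \triangle \pi(X)$ for $s \in S$, and an induction on word length using the identity $(Xs)g' \triangle Xg' = (Xs \triangle X) g'$ (right translation preserves $H$-cardinality) extends this to finiteness of $\pi(X)g \triangle \pi(X)$ for every $g \in G$.

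Finally, since both $X$ and $Y$ are left-$H$-invariant, $\pi(X \triangle Y) = \pi(X) \triangle \pi(Y)$, so $X \triangle Y$ is $H$-finite if and only if $\pi(X) \triangle \pi(Y)$ is finite. Hence $\pi$ descends to a $\Z_2$-linear isomorphism $\mathcal E(H\backslash G) \cong \mathcal B(\Gamma_H)/\mathcal F(\Gamma_H)$, yielding $e(G,H) = \dim_{\Z_2}\mathcal E(H\backslash G) = e(\Gamma_H) = e(H\backslash\Gamma)$. The only genuinely non-elementary input is the Freudenthal identification invoked above; the rest is routine bookkeeping with symmetric differences. The main technical pitfall to avoid is miscounting in the forward direction of the correspondence --- one must check that every boundary edge of $\pi(X)$ is accounted for exactly once by the generator-by-generator symmetric differences, which is straightforward since each edge of $\Gamma_H$ carries a well-defined generator label.
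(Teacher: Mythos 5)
The paper does not prove this proposition --- it is stated with only a citation to \cite[Lem.~1.1]{scott1977ends} --- so there is no in-paper argument to compare yours against. Your argument is correct and is essentially the classical one (going back to Cohen and Scott): translate ``almost invariant'' into ``finite edge coboundary'' generator by generator, and then quote the Freudenthal identification $\dim_{\Z_2}\bigl(\mathcal B(\Gamma_H)/\mathcal F(\Gamma_H)\bigr) = e(\Gamma_H)$ for a connected locally finite graph.

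One point worth making explicit. The paper's displayed definition $\mathcal E(H\backslash G) := \mathcal P(H\backslash G)/\mathcal F(H\backslash G)$, taken literally, is the \emph{entire} quotient, which has infinite $\Z_2$-rank whenever $H\backslash G$ is infinite; what is actually meant (and what Scott defines) is the $G$-fixed subspace $\bigl(\mathcal P(H\backslash G)/\mathcal F(H\backslash G)\bigr)^G$, whose elements are exactly the classes of almost invariant subsets of $H\backslash G$. Your proof implicitly and correctly works with this subspace: the correspondence you establish --- $X$ is $H$-almost invariant iff $\pi(X) \in \mathcal B(\Gamma_H)$, via the tally $|\delta\pi(X)| = \sum_{s} |\pi(X) \triangle \pi(X)s^{-1}|$ over one orientation per generator, together with the word-length induction --- identifies precisely that subspace with $\mathcal B(\Gamma_H)/\mathcal F(\Gamma_H)$, not the full quotient. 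With that understanding the proof is complete and sound.
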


From the above it is then clear that this definition generalises the standard geometric notion of ends.
This also provides a helpful way of seeing that the number of geometric ends of a finitely generated group is indeed independent of the choice of generating set.
\medskip

There is another competing, but equally interesting notion of ends of a pair of groups, namely the idea of \textit{filtered ends}. This was considered independently
by Geoghegan \cite[Ch.~14]{geoghegan2007topological},  Kropholler--Roller \cite{kropholler1989relative}, and Bowditch \cite{bowditch2002splittings}. 
We now summarise the definition as it appears in \cite{geoghegan2007topological}. We first need the following technical preliminaries relating to filtrations.  

Let $Y$ be a connected, locally finite cell complex. A \textit{filtration} $\mathcal K = \{K_i\}$ of $Y$ is an ascending sequence of subcomplexes $K_1 \subset K_2 \subset \ldots \subset Y$ such that $\bigcup_i K_i = Y$. We say that this filtration is \textit{finite} if each $K_i$ is finite. We call the pair $(Y, \mathcal K)$ a \textit{filtered complex}, and 
a map $f : (Y, \mathcal K) \to (X, \mathcal L)$ between filtered complexes is called a \textit{filtered map} if the following four conditions hold:
\begin{enumerate}
    \item $\forall i$, $\exists j$ such that $f(K_j) \subset L_i$,
    \item $\forall i$, $\exists j$ such that $f(K_i) \subset L_j$,
    \item $\forall i$, $\exists j$ such that $f(Y-K_j) \subset X-L_i$,
    \item $\forall i$, $\exists j$ such that $f(Y-K_i) \subset X-L_j$.
\end{enumerate}
We say that a homotopy $H_t$ between two filtered maps is a \textit{filtered homotopy} if  $H_t$ is a filtered map for each $t$. 
Fix a basepoint $b \in Y$, then a \textit{filtered ray based at b} is a map $\gamma : [0,\infty) \to Y$ with $\gamma(0)=b$, which is filtered with respect to the filtration $\{[0,i]:i \in \mathbb N\}$ of $[0,\infty)$.
We say that two filtered rays based at $b$ are \textit{equivalent} if there is a filtered homotopy between them fixing $b$. A \textit{filtered end} of $(Y, \mathcal K)$ is an equivalence class of filtered rays based at $b$. It is easy to see that the choice of $b$ does not affect the number of filtered ends. 

Let $G$ be a finitely generated group and $H$ a finitely generated subgroup. Let $X_G$ be a Cayley complex for $G$, and let $p : X_G \to H \backslash X_G$ denote quotient map. Choose a finite filtration $\mathcal K = \{K_i\}$ of $H \backslash X_G$, and lift this to a filtration $\mathcal L = \{L_i\}$ of $X_G$, where $L_i = p^{-1}(K_i)$.

\begin{definition}
The \textit{number of filtered ends of the pair} $(G,H)$, denoted by $\tilde e(G,H)$, is defined as the number of filtered ends of the filtered complex $(X_G, \mathcal L)$. 
\end{definition}

Though it seems as though this definition depends on a choice of the filtration $\mathcal K$, it is a helpful fact that it does not --- see \cite[Ch.~14]{geoghegan2007topological} for details. 
While this definition is a little technical, we will see later on that, in the case of quasiconvex subgroups of hyperbolic groups, these filtered ends can be clarified by looking at the Gromov boundary. We conclude this section with two results relating ends of pairs and filtered ends of pairs. 

\begin{proposition}[{\cite[Prop.~14.5.3]{geoghegan2007topological}}]
Let $G$ be a group and $H$ be a finitely generated subgroup. Then $e(G,H) \leq \tilde e(G,H)$.
\end{proposition}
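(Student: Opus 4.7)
The plan is to exhibit a natural surjection from the set of filtered ends of $(G,H)$ onto the set of ordinary ends of the coset graph $H \backslash X_G$. By Proposition~\ref{prop:coset-ends}, $e(G,H) = |\mathrm{Ends}(H \backslash X_G)|$, so such a surjection immediately yields $\tilde e(G,H) \geq e(G,H)$. Throughout, the key observation is that $G$ acts freely on the Cayley $2$-complex $X_G$, so $p : X_G \to H \backslash X_G$ is a covering map and the filtration $\mathcal L = \{L_i\}$ is precisely the pullback $L_i = p^{-1}(K_i)$ of the finite filtration $\mathcal K$ of the coset graph.

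I would first show that projection sends filtered rays to proper rays: given a filtered ray $\gamma : [0,\infty) \to X_G$, the condition that $\gamma$ eventually lies outside each $L_i$ translates immediately, via $L_i = p^{-1}(K_i)$, into $p \circ \gamma$ eventually leaving each $K_i$, which is precisely properness with respect to $\mathcal K$. Next I would verify that $[\gamma] \mapsto [p \circ \gamma]$ descends to equivalence classes. If $\{H_t\}_{t \in [0,1]}$ is a filtered homotopy between filtered rays $\gamma_0$ and $\gamma_1$, then each slice $p \circ H_t$ is a proper ray in $H \backslash X_G$, and a compactness argument on $[0,1]$ (using that the end class of a proper ray is stable under sufficiently small proper deformation, together with local finiteness of the coset graph) shows that $p \circ \gamma_0$ and $p \circ \gamma_1$ represent the same end. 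This yields a well-defined map $\Phi$ from filtered ends of $(G,H)$ to ends of $H \backslash X_G$. Surjectivity of $\Phi$ comes from path-lifting: a proper ray $\alpha$ in $H \backslash X_G$ based at $p(b)$ lifts uniquely along the covering $p$ to a ray $\tilde\alpha$ in $X_G$ based at $b$, and the pullback form of $\mathcal L$ converts properness of $\alpha$ against $\mathcal K$ directly into the filtration conditions for $\tilde\alpha$ against $\mathcal L$, making $\tilde\alpha$ a filtered ray with $\Phi([\tilde\alpha])$ equal to the given end.

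The main technical obstacle I anticipate is the well-definedness step: since a filtered homotopy is only required to be filtered slice-by-slice, one must argue uniformly in $t$ that the family $\{p \circ H_t\}$ does not change the end class downstairs. I would handle this by subdividing $[0,1]$ into finitely many intervals on each of which the slices $p \circ H_t$ eventually lie in a single connected component of $(H \backslash X_G) - K_i$ for each relevant $i$, using continuity of the homotopy together with local finiteness of $X_G$. Combining this surjection $\Phi$ with Proposition~\ref{prop:coset-ends} then delivers the required inequality $e(G,H) \leq \tilde e(G,H)$.
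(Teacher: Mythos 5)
The paper does not prove this proposition; it is cited directly from Geoghegan, so there is no in-paper argument to compare against. On its own terms, your plan --- build a surjection from filtered ends of $(X_G, \mathcal{L})$ onto ends of $H \backslash X_G$ and invoke Proposition~\ref{prop:coset-ends} --- is exactly right, and the projection and lifting steps are handled correctly.

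The gap is in the well-definedness step, which you rightly flag as the main obstacle but do not actually close. Under the definition of filtered homotopy as stated in the paper (each slice $H_t$ is a filtered map, with no uniformity over $t$), the projected end class can genuinely jump. Take $G = \Z$, $H = \{1\}$, $X_G = \R$ with $K_i = L_i = [-i,i]$, and the homotopy $F(s,t) = \min(s,\, 2/t - s)$ for $t>0$, $F(s,0)=s$. Every slice $F_t$ is a proper ray based at $0$, and $F$ is continuous, but $F_0$ escapes to $+\infty$ while $F_t$ escapes to $-\infty$ for every $t>0$; the escape parameter of $F_t$ blows up as $t \to 0$, which defeats any compactness or subdivision argument of the kind you sketch. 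What makes the result true is that Geoghegan's actual definition of filtered homotopy requires the map $H : [0,\infty)\times[0,1] \to X_G$ \emph{itself} to be filtered, with $[0,\infty)\times[0,1]$ carrying the filtration $\{[0,i]\times[0,1]\}$ --- a condition strictly stronger than slice-by-slice filteredness, which the paper's summary quietly elides. With that definition, condition~(3) applied to $H$ gives, for each $i$, a single $j$ with $H([j,\infty)\times[0,1]) \cap L_i = \emptyset$; since $L_i = p^{-1}(K_i)$ this says $p \circ H$ misses $K_i$ past time $j$ uniformly in $t$, so $p \circ H$ is an honest proper homotopy in the coset complex and the end class is preserved with no further work. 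Once you replace the slice-by-slice hypothesis by the genuine filtered-homotopy condition, your argument goes through.
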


Equality is certainly possible, but not true in general. A simple counterexample is presented in \cite[p.~32]{scott2003regular}, which involves a one-sided essential simple closed curve on a non-orientable surface. We finally conclude with the following, which demonstrates the common thread between the two competing definitions. 

\begin{proposition}[{\cite[Lem.~2.40]{scott2003regular}}]
Let $G$ be a group and $H$ be a finitely generated subgroup of infinite index. Then $\tilde e(G,H) > 1$ if and only if there is some subgroup $K \leq H$ such that $e(G,K) > 1$.
\end{proposition}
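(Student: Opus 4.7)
The plan is to exploit both interpretations of the invariants at play: $e(G,K)$ counts the ends of the Schreier graph $K \backslash X_G$ by Proposition~\ref{prop:coset-ends}, while $\tilde e(G,H)$ counts filtered rays in $X_G$ modulo filtered homotopy, where the filtration $\mathcal L = \{p_H^{-1}(C_i)\}$ lifts an exhausting filtration $\{C_i\}$ of $H \backslash X_G$ by compacts. Throughout I would work with the tower of coverings $X_G \to K \backslash X_G \to H \backslash X_G$, which induces compatible relationships between ends of the intermediate quotient and filtered ends of the top one.

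For the ``if'' direction, suppose $e(G,K) > 1$ and pick two proper rays $\alpha_1, \alpha_2$ in $K \backslash X_G$ converging to distinct ends. Lift each to a proper ray $\tilde \alpha_i$ in $X_G$ sharing a common basepoint, and observe that the further projections to $H \backslash X_G$ are automatically proper rays provided $[H:K] < \infty$; in the infinite-index case, one can first replace the $\alpha_i$ by rays in $K \backslash X_G$ whose projections to $H \backslash X_G$ are proper, which is possible by an exhaustion argument using the structure of the covering $K \backslash X_G \to H \backslash X_G$. The $\tilde \alpha_i$ are then filtered rays in $(X_G, \mathcal L)$, and any filtered homotopy between them would descend via the intermediate projection to a proper homotopy in $K \backslash X_G$ between $\alpha_1$ and $\alpha_2$, contradicting their convergence to distinct ends. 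Hence they represent distinct filtered ends, giving $\tilde e(G,H) \geq 2$.

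For the ``only if'' direction, take two filtered-inequivalent rays $\tilde \gamma_1, \tilde \gamma_2$ in $X_G$ representing distinct filtered ends $\epsilon_1, \epsilon_2$. The deck action $H \actson X_G$ induces an action of $H$ on the set of filtered ends, and I would distinguish two cases. If $\epsilon_1, \epsilon_2$ lie in distinct $H$-orbits, their projections to $H \backslash X_G$ are proper rays going to distinct ends, so $e(G,H) \geq 2$ and the choice $K := H$ suffices. Otherwise, I set $K := \Stab_H(\epsilon_1)$; then $\tilde \gamma_1$ and a translate $h \cdot \tilde \gamma_1$, with $h \in H$ chosen so that $h\epsilon_1 = \epsilon_2$, project to proper rays in $K \backslash X_G$ converging to distinct ends of the Schreier graph of $K$, yielding $e(G,K) \geq 2$.

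The main obstacle is ensuring that the lifts and projections respect the filtered structure along the tower of covers; in particular, when $K$ has infinite index in $H$ the covering $K \backslash X_G \to H \backslash X_G$ is infinite, and compact sets do not pull back to compact sets, so the descent of proper and filtered homotopies needs careful verification. Handling this requires a compatible choice of filtrations throughout the tower, which is technical but feasible given the freedom in choosing the base filtration of $H \backslash X_G$; this is where the bulk of the proof lies, and is precisely the content of the cited lemma of Scott.
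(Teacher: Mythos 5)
The paper does not prove this statement; it is quoted directly from Scott--Swarup (Lem.~2.40 of \cite{scott2003regular}) with no argument given, so there is no in-paper proof to compare against, and I assess your proposal on its own merits. The difficulty you flag at the end — that properness does not descend along the infinite covering $K\backslash X_G \to H\backslash X_G$ when $[H:K]=\infty$ — is not a technicality to be deferred: it is a genuine obstruction to several of your steps as written, and it is precisely where the content of the lemma lies.

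In the ``if'' direction, you take proper rays $\alpha_i$ in $K\backslash X_G$ and claim they can be replaced so that their further projections to $H\backslash X_G$ are proper; no argument is given, and this is false in general, because a proper ray in $K\backslash X_G$ can remain in the preimage of a compact piece of $H\backslash X_G$ under an infinite-sheeted covering. Phrased with almost invariant sets: a $K$-almost invariant set $B$ witnessing $e(G,K)>1$ is $K$-infinite on both sides, yet one of $B$, $G\smallsetminus B$ may well be $H$-finite, in which case $[B]$ is trivial for $\tilde e(G,H)$ and contributes nothing. Repairing this requires modifying the set (or choosing the subgroup differently) and making essential use of $[G:H]=\infty$, a hypothesis your sketch never invokes. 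In the ``only if'' direction, Case 2 has the same gap in a different guise: to conclude that $p_K\tilde\gamma_1$ and $p_K(h\tilde\gamma_1)$ hit distinct ends of $K\backslash X_G$, you would need that a proper homotopy between them in $K\backslash X_G$ lifts to an $H$-\emph{filtered} homotopy in $X_G$, contradicting $\epsilon_1\neq\epsilon_2$; but the lift is only $K$-filtered, and properness downstairs in $K\backslash X_G$ does not give properness after the further projection to $H\backslash X_G$. It is also not clear that $K=\Stab_H(\epsilon_1)$ is the right subgroup; the standard treatments work with the stabiliser of (a representative of) the almost invariant set in the Kropholler--Roller picture, and the relationship between the two is itself nontrivial. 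As it stands, the proposal identifies the correct landscape but defers the actual proof to ``the content of the cited lemma of Scott,'' which is circular.
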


\subsection{Hyperbolic groups}

Let $\delta \geq 0$, and recall that a geodesic metric space $X$ is $\delta$\textit{-hyperbolic} if every geodesic triangle in $X$ is $\delta$-slim. That is, if every side of a geodesic triangle is contained in the $\delta$-neighbourhood of the other two sides. We say that a finitely generated group $G$ is \textit{hyperbolic} if some (equivalently, any) Cayley graph of $G$ is $\delta$-hyperbolic for some $\delta \geq 0$.  

For the remainder of this section, let $\Gamma$ be a $\delta$-hyperbolic Cayley graph of the finitely generated group $G$. We recall some basic facts about hyperbolic groups which will be important later. The first is often known as the \textit{visibility} property.

\begin{lemma}\label{lem:visibility}
For every $g, g' \in \Gamma$, there is a geodesic ray $\gamma$ based at $g'$ which passes within $C = 3\delta$ of $g$. 
\end{lemma}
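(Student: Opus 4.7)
Plan. The claim is the classical visibility property for proper hyperbolic spaces, and since the conclusion requires a genuine geodesic ray it is implicit that $G$ is infinite (otherwise the statement is vacuous). My plan has two main parts: first, locate a boundary point $\xi \in \partial G$ whose geodesic rays from $g$ head ``away from'' $g'$, in the sense that the Gromov product $(g' \mid \xi)_g$ is small; second, take a geodesic ray $\gamma$ from $g'$ to $\xi$ and use slimness of the ideal triangle with vertices $g', g, \xi$ to conclude that $\gamma$ passes within $3\delta$ of $g$.

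For the first step: since $G$ is infinite and $\Gamma$ is locally finite (hence proper), for any sequence of vertices $v_n$ with $d(g, v_n) \to \infty$ the geodesic segments $[g, v_n]$ admit, by Arzela--Ascoli, a subsequential pointwise limit $\rho$, which is a geodesic ray based at $g$ with endpoint some $\xi \in \partial G$. The $v_n$ can be chosen so that $\rho$ does not initially fellow-travel any geodesic $[g, g']$, equivalently so that $(g' \mid \xi)_g \leq \delta$. In the virtually cyclic case $|\partial G|=2$ one of the two boundary points trivially works; in the non-elementary case generic choices of boundary point suffice, by minimality of the $G$-action on $\partial G$.

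For the second step, apply Arzela--Ascoli again to the segments $[g', \rho(n)]$ to produce a geodesic ray $\gamma$ from $g'$ asymptotic to $\xi$. The ideal triangle with sides $[g', g]$, $\rho$, and $\gamma$ is $\delta$-slim. Let $p$ be the point of $\gamma$ at distance $d(g', g)$ from $g'$. By slimness, $p$ lies within $\delta$ of some $q \in [g', g] \cup \rho$. If $q \in [g', g]$, then distance comparison along $[g', g]$ forces $d(q, g) \leq \delta$, and so $d(p, g) \leq 2\delta$. If $q \in \rho$, then the estimate $(g' \mid \xi)_g \leq \delta$ translates to $d(g', \rho(t)) \geq d(g', g) + t - 2\delta$, which combined with $d(g', q) \leq d(g', p) + \delta = d(g', g) + \delta$ forces $d(g, q) \leq 2\delta$, giving $d(p, g) \leq 3\delta$.

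The hardest part will be the choice of $\xi$ in the first step: picking a boundary direction from $g$ that heads away from $g'$. A naive sequence of long geodesics need not pass close to $g$ at all (in $\mathbb{Z}$, for instance, a geodesic from $g'$ running in the wrong direction never comes near $g$), so one must genuinely use the non-trivial structure of $\partial G$ for infinite hyperbolic $G$. Once $\xi$ is chosen, the slim ideal triangle case analysis and the Arzela--Ascoli limiting argument are routine.
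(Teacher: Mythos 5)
The paper does not prove this lemma; it is recalled as the standard ``visibility'' property, and the usual source (Bestvina--Mess) asserts the constant $3\delta$ without derivation. So there is no argument in the paper to compare against, only the literature.

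Your plan --- pick $\xi \in \partial G$ heading away from $g'$ as seen from $g$, then run a slimness argument on the ideal triangle with vertices $g', g, \xi$ --- is the natural shape, and the Arzela--Ascoli and ideal-triangle-slimness steps are fine, but the first step has a genuine gap. You assert that some $\xi$ with $(g'\cdot\xi)_g \leq \delta$ exists, justified by ``generic choices suffice, by minimality of the $G$-action.'' Minimality gives you dense orbits, not that the shadow $\{\eta : (g'\cdot\eta)_g > \delta\}$ is a proper subset of $\partial G$, so this is not an argument. Worse, since $(g'\cdot x)_g + (g\cdot x)_{g'} \equiv d(g,g')$ for points $x$, the condition $(g'\cdot\xi)_g \leq \delta$ is the same, up to a bounded multiple of $\delta$, as saying geodesic rays from $g'$ to $\xi$ pass within a bounded multiple of $\delta$ of $g$ --- which is exactly the conclusion, so the reduction is circular. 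One cannot simply extend $[g',g]$ past $g$ (Cayley graphs can have dead ends), and producing a far-away direction from $g$ that departs from $[g,g']$ after only $O(\delta)$ is precisely where the content of the lemma lies; your proof does not supply it. There are also two constant slips in the case $q = \rho(t)$: your two stated inequalities force $t \leq 3\delta$, not $d(g,q)\leq 2\delta$, so $d(p,g) \leq \delta + 3\delta = 4\delta$; and the estimate $d(g',\rho(t)) \geq d(g',g) + t - 2\delta$ needs $(g'\cdot\rho(t))_g \leq \delta$ at that particular finite $t$, which costs one more $\delta$ over the hypothesis $(g'\cdot\xi)_g \leq \delta$ by the approximate monotonicity of the Gromov product along $\rho$. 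These are harmless (one can enlarge $C$), but the missing argument for the existence of $\xi$ is not cosmetic.
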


Viewing geodesic rays as ``lines of sight'' from the basepoint, we can imagine this theorem as saying that every point in $\Gamma$ is (nearly) visible from every other point.

The second result we need is the following, commonly known as the \textit{Morse Lemma}. This fact will play a fundamental role in much of the machinery of this paper. Informally, it says that quasi-geodesics must stay uniformly close to geodesics with the same endpoints.   

\begin{lemma}\label{lem:morse}Let $\lambda \geq 1$, $\varepsilon \geq 0$. 
Then there is some computable $D = D(\delta, \lambda, \varepsilon) \geq 0$ such that $(\lambda, \varepsilon)$-quasi-geodesics in $\Gamma$ are contained within the $D$-neighbourhood of any geodesic with the same endpoints. 
\end{lemma}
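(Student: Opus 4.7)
The plan is to prove this in three stages, each producing explicit constants so that $D$ itself is computable from $\delta, \lambda, \varepsilon$. First, I would reduce to the case of a continuous quasi-geodesic: given a $(\lambda,\varepsilon)$-quasi-geodesic $\alpha$, interpolating geodesically along edges of $\Gamma$ between consecutive integer-parameter images yields a continuous path $\tilde\alpha$ which is a $(\lambda, \varepsilon')$-quasi-geodesic with $\varepsilon'$ computable from $\lambda, \varepsilon$, and whose image lies within Hausdorff distance $1/2$ of that of $\alpha$. It then suffices to bound the Hausdorff distance between $\tilde\alpha$ and the geodesic $\gamma$ joining its endpoints.

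Next, I would bound $D_0 := \max_{p \in \gamma} d(p, \tilde\alpha)$. Let $p_0 \in \gamma$ attain this maximum, choose points $p^\pm \in \gamma$ at distance $2D_0$ from $p_0$ on either side (or the endpoints of $\gamma$ if closer), and pick $q^\pm \in \tilde\alpha$ with $d(p^\pm, q^\pm) \leq D_0$. The concatenation $\sigma$ of the geodesic $[p^-, q^-]$, the subarc of $\tilde\alpha$ from $q^-$ to $q^+$, and the geodesic $[q^+, p^+]$ is then a path from $p^-$ to $p^+$ which, by the choice of $D_0$, avoids the open ball $B(p_0, D_0)$. On the other hand, the quasi-geodesic estimate together with the triangle inequality gives $|\sigma| \leq 2D_0 + 6\lambda D_0 + \varepsilon$, which is linear in $D_0$.

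The crux of the argument is to combine this with the exponential divergence of geodesics in a $\delta$-hyperbolic space: by iterating the slim-triangles condition one shows that any path from $p^-$ to $p^+$ which avoids $B(p_0, r)$ has length at least of order $2^{r/\delta}$, with explicit constants. Comparing this exponential lower bound against the linear upper bound above forces $D_0$ to lie below a computable function of $\delta, \lambda, \varepsilon$. The main obstacle is the bookkeeping in this step: one needs to extract genuinely explicit constants from the divergence estimate rather than merely a qualitative bound, and this is the technical heart of the proof.

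Finally, once we know $\gamma \subset N_{D_0}(\tilde\alpha)$, the direction actually stated in the lemma follows by a short argument. For $q \in \tilde\alpha$, consider the last point $q^-$ and the first point $q^+$ of $\tilde\alpha$ on either side of $q$ lying within $D_0$ of $\gamma$; these exist because the endpoints of $\tilde\alpha$ lie on $\gamma$. Their nearest points $p^\pm \in \gamma$ satisfy a bound on $d(p^-, p^+)$ in terms of $D_0$ and $\delta$, and the quasi-geodesic estimate then controls the length of $\tilde\alpha|_{[q^-, q^+]}$, and hence $d(q, \gamma)$, linearly in $D_0, \lambda, \varepsilon$. Setting $D$ to be the resulting bound, augmented by the interpolation constant from the first step, yields the desired computable $D(\delta, \lambda, \varepsilon)$.
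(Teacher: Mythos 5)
The paper does not actually prove this lemma: it is stated as a standard result (the Morse Lemma / stability of quasi-geodesics), with the constant's computability implicit in the explicit estimates of, e.g., Bridson--Haefliger III.H.1.7. Your proposal reproduces precisely that standard argument --- tame the quasi-geodesic, bound $\max_{p\in\gamma} d(p,\tilde\alpha)$ by comparing the linear length bound on the detour path $\sigma$ against exponential divergence of geodesics in a $\delta$-hyperbolic space, then convert that one-sided bound into a bound on $d(q,\gamma)$ for $q \in \tilde\alpha$ using the quasi-geodesic length estimate --- so the approach is the same as the one the paper is implicitly appealing to, and the strategy is sound and does yield an explicitly computable $D(\delta,\lambda,\varepsilon)$.

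One small slip worth flagging: in the taming step, the Hausdorff distance between $\alpha$ and the interpolated path $\tilde\alpha$ is not $1/2$; a point of $\alpha|_{[n,n+1]}$ can lie as far as roughly $\lambda+\varepsilon$ from $\alpha(n)$, and likewise for the geodesic interpolation, so the correct bound is of order $\lambda+\varepsilon$. This does not affect the argument --- it is still an explicit, computable quantity to add into $D$ --- but the numerical claim as written is incorrect. Similarly, the constant $\varepsilon$ appearing in your length bound for $\sigma$ should really be the additive constant $\varepsilon'$ controlling arc length of the tamed path, and the bound on $d(p^-,p^+)$ in the last step depends on $\lambda,\varepsilon$ rather than on $\delta$; again harmless, but worth tidying if you write this out in full.
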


We now briefly introduce the Gromov boundary of a hyperbolic group. Consider the set $\mathcal R = \mathcal R(\Gamma, 1)$ of geodesic rays in $\Gamma$ based at $1$. Throughout this paper we may abuse notation and identify a ray or path with its image in $\Gamma$. We say that two such rays $\gamma, \gamma' \in \mathcal R$ are \textit{equivalent}, and write $\gamma \sim \gamma'$, if their Hausdorff distance as subsets of $\Gamma$ is finite. Let $\partial \Gamma = \mathcal R / \sim$, and call this set the \textit{Gromov boundary of $\Gamma$.} Given $\gamma \in \mathcal R$, denote by $\gamma(\infty)$ its equivalence class in $\partial \Gamma$. We topologise $\partial \Gamma$ as follows. Given $x,y,z \in \Gamma$, let 
$$
(x\cdot y)_z = \tfrac 1 2 (d(x,z) + d(y,z) - d(x,z))
$$
denote the \textit{Gromov product}. For $p \in \partial \Gamma$, $r \geq 0$, let 
$$
V(p, r) = \set {q \in \partial \Gamma}{\textrm{$\exists \gamma \in p, \gamma' \in q$ such that $\liminf_{t \to \infty} (\gamma(t)\cdot \gamma'(t))_1 \geq r$}}.
$$
Given $p \in \partial \Gamma$, we set $\set{V(p,r)}{r\geq 0}$ as a basis of neighbourhoods about $p$. This topologises $\partial \Gamma$. Informally, two rays are ``close'' in $\partial \Gamma$ if they fellow-travel for a long time. 
It can be seen that, up to homeomorphism, $\partial \Gamma$ does not depend on a choice of basepoint, or even on the choice of Cayley graph $\Gamma$. Thus, we may abuse notation and write $\partial G$. 

One can also define the \textit{sequential boundary} of $\Gamma$. 
Fix a basepoint $z \in \Gamma$. We say that a sequence $(p_n) \in \Gamma^\N$ \textit{tends to infinity} if $\liminf_{n, m \to \infty}(p_n\cdot p_m)_z = \infty$. We say two such sequences $(p_n)$, $(q_m)$ are \textit{equivalent} if $\liminf_{n,m \to \infty} (p_n\cdot q_m)_z = \infty$. 
One can then define $\partial \Gamma$ as the set of equivalence classes of sequences which tend to infinity, and topologise it in a similar way. Given a sequence $(p_n)$ which tends to infinity, denote by $\lim_{n\to \infty} p_n$ its equivalence class in $\partial \Gamma$.  This definition is equivalent to the previous \cite[Prop.~2.14]{kapovich2002boundaries}. 
Other equivalent variations of this construction are possible, and can be found in \cite{kapovich2002boundaries}. 

We can extend the definition of the Gromov product to $\partial \Gamma$, by defining 
$$
(p\cdot q)_z = \sup \left\{ \liminf_{n,m \to \infty} (p_n\cdot  q_m)_z :  (p_n) \in p,  \ (q_m) \in q    \right\} ,
$$
for $p, q \in \partial \Gamma$, $z \in \Gamma$. 
Using this extension there is a natural way to put a metric on $\partial G$, though the metric is not canonical, as it depends on some chosen parameters. In particular, a \textit{visual metric} on $\partial \Gamma$ with parameter $a$ and multiplicative constants $k_1$, $k_2$ is a metric $\rho $ satisfying
$$
k_1 a^{-(p\cdot q)_1} \leq \rho (p,q) \leq k_2 a^{-(p\cdot q)_1} 
$$
for every $p, q \in \partial \Gamma$. See \cite[p.~434]{bridson2013metric} for how to construct such a metric given suitable parameters. 

Note that the number of connected components of the boundary can be identified with the number of ends $e(G)$ of $G$. In particular, the boundary $\partial \Gamma$ is connected if and only if $G$ is one-ended. Thus by Stallings' Theorem for ends of groups (see e.g. \cite{scott1979topological}) we have that $\partial G$ is connected if and only if $G$ admits no finite splitting.  
\medskip

We conclude this section with an important and deep result due to Bestvina--Mess \cite{bestvina1991boundary}, Bowditch \cite{bowditch1998cut}, and Swarup \cite{swarup1996cut}. This result essentially quantifies the connectedness of the boundary and relates it to the local geometry of the Cayley graph. Let $C = 3\delta$ as in Lemma~\ref{lem:visibility}, and let $M = 6C + 2\delta + 3$. We then have the following.

\begin{definition}
We say that $\Gamma$ satisfies $\ddagger_n$ for $n \geq 1$ if for every $R \geq 0$ and every $x, y \in \Gamma$ such that $d(x,1) = d(y,1) = R$ and $d(x,y) \leq M$ we have that there is a path through $\Gamma - B_1(R)$ connecting $x$ to $y$, of length at most $n$. 
\end{definition}

\begin{theorem}[{\cite[Props.~3.1,~3.2]{bestvina1991boundary}}]\label{thm:bestvina-mess}
Let $G$ be a hyperbolic group, with $\delta$-hyperbolic Cayley graph $\Gamma$. Then $\partial G$ is connected if and only if there exists $n \geq 1$ such that $\Gamma$ satisfies $\ddagger_n$. 
\end{theorem}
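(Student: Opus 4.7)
The plan is to prove both directions by relating the $\ddagger_n$ condition to the fellow-travelling behaviour of geodesic rays, and passing to the visual topology on $\partial G$ via the Gromov product.

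$(\Leftarrow)$ Assume $\ddagger_n$ holds for some fixed $n$. Given $\xi, \eta \in \partial G$, choose geodesic rays $\gamma_\xi, \gamma_\eta$ from $1$. For each integer $R$ pick a geodesic $\sigma_R$ in $\Gamma$ from $\gamma_\xi(R)$ to $\gamma_\eta(R)$, and along $\sigma_R$ select vertices $x_0^R = \gamma_\xi(R), x_1^R, \dots, x_{k_R}^R = \gamma_\eta(R)$ with consecutive distance at most $M$. After a controlled local adjustment one can assume each $x_i^R$ lies on the sphere $S(1,R)$, so $\ddagger_n$ furnishes a path $\pi_i^R$ from $x_i^R$ to $x_{i+1}^R$ lying outside $B_1(R)$ and of length $\leq n$. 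Using Lemma~\ref{lem:visibility}, through every vertex of $\pi_i^R$ there passes a geodesic ray from $1$ at distance $\leq C$; the Morse lemma (Lemma~\ref{lem:morse}) shows that two such rays through adjacent vertices of $\pi_i^R$ fellow-travel up to distance at least $R - O(n,\delta)$ from $1$, so their Gromov product with respect to $1$ is at least $R - O(1)$. Thus the sequence of boundary points obtained by concatenating the $\pi_i^R$ over $i$ yields a chain $\xi = \xi_0^R, \xi_1^R, \dots, \xi_{N_R}^R = \eta$ with consecutive visual distance tending to $0$ uniformly as $R \to \infty$. A standard compactness-based diagonal construction (or an explicit parameterisation $[0,1] \to \partial G$ built from a nested family of such chains) promotes this to a continuous path from $\xi$ to $\eta$, proving $\partial G$ path-connected.

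$(\Rightarrow)$ For the converse, assume $\ddagger_n$ fails for every $n$; I would argue by contradiction that $\partial G$ is disconnected. For each $n$, choose witnesses $x_n, y_n$ with $d(x_n,1) = d(y_n,1) = R_n$ and $d(x_n, y_n) \leq M$, but no path from $x_n$ to $y_n$ in $\Gamma - B_1(R_n)$ of length at most $n$. Since $\Gamma$ is locally finite, $R_n \to \infty$ (else some fixed ball forces $x_n, y_n$ to lie in a common bounded component, contradicting the failure for large $n$). Applying Lemma~\ref{lem:visibility} to $x_n, y_n$, produce rays $\alpha_n, \beta_n$ from $1$ and, after passing to subsequences using compactness of $\partial G$, assume $\alpha_n(\infty) \to \xi$ and $\beta_n(\infty) \to \eta$; crucially, since $d(x_n, y_n) \leq M$ one has $\xi = \eta$. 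Let $A_n$ be the connected component of $x_n$ in $\Gamma - B_1(R_n)$ and $B_n = (\Gamma - B_1(R_n)) - A_n$. Define
\[
A = \set{\zeta \in \partial G}{\exists \ (n_k) \to \infty, \ \text{rays } \gamma_k \in \zeta_k \to \zeta \ \text{with} \ \gamma_k(R_{n_k}) \in A_{n_k}},
\]
and analogously $B$ with $B_{n_k}$. By construction $\xi \in \overline A \cap \overline B$, and by the Morse Lemma every ray from $1$ enters either $A_{n_k}$ or $B_{n_k}$ at radius $R_{n_k}$, so $\overline A \cup \overline B = \partial G$. The substantive step is to show that one can separate $\overline A$ and $\overline B$ along a null-sequence of boundary sets, producing a clopen partition of $\partial G$ refining $\overline A, \overline B$; this yields a disconnection, contradicting the hypothesis.

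The main obstacle is the second direction, and specifically the passage from the combinatorial non-connectedness of spheres witnessed by $(x_n, y_n)$ to an honest topological disconnection of $\partial G$. The subtlety is that the ``limit separations'' $\overline A, \overline B$ can a priori overlap on a large set, so one must show that the failure of $\ddagger_n$ for all $n$ produces separations that are not collapsed by the fellow-travelling of rays. This requires careful use of Lemma~\ref{lem:morse} (to bound how rays can cross between $A_n$ and $B_n$ at radii past $R_n$) together with a pigeonhole argument on the visual metric to extract a subsequence along which the partitions $(A_n, B_n)$ converge to a genuine clopen partition of $\partial G$. The first direction, by contrast, is essentially an explicit path-construction once Lemmas~\ref{lem:visibility} and~\ref{lem:morse} are in hand.
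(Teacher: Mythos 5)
The paper does not prove Theorem~\ref{thm:bestvina-mess}; it is a citation, attributed jointly to Bestvina--Mess (Props.~3.1, 3.2), Bowditch, and Swarup. In particular the direction ``$\partial G$ connected $\Rightarrow \ddagger_n$'' relies on the deep theorem of Bowditch and Swarup that the boundary of a one-ended hyperbolic group has no global cut point (and hence is locally connected). Your attempt at an elementary proof has a genuine gap in each direction.

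In $(\Leftarrow)$, the step ``after a controlled local adjustment one can assume each $x_i^R$ lies on the sphere $S(1,R)$'' is false. By $\delta$-slimness of the triangle $[1,\gamma_\xi(R),\gamma_\eta(R)]$, the geodesic $\sigma_R$ passes within $O(\delta)$ of a point whose distance from $1$ is approximately $(\xi\cdot\eta)_1$, which stays bounded as $R\to\infty$. Thus the middle vertices of $\sigma_R$ are at distance $\approx(\xi\cdot\eta)_1$ from $1$, not $\approx R$, and pushing them out to $S(1,R)$ moves them by $\approx R - (\xi\cdot\eta)_1$, an unbounded amount. So $\ddagger_n$ cannot be applied along $\sigma_R$. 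The genuine Bestvina--Mess projection argument (of which Lemma~\ref{lem:projecting-paths} in the paper is a variant) starts from points on a single sphere already known to be $M$-close and bootstraps outward level by level; it does not begin with a geodesic between two sphere points that may be on opposite sides of the basepoint, and producing the initial sphere chain between $\gamma_\xi(R)$ and $\gamma_\eta(R)$ already requires a connectivity input you have not supplied.

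In $(\Rightarrow)$, the ``substantive step'' you flag is not a step you can fill in: it is the content of the Bowditch--Swarup no-global-cut-point theorem. Failure of $\ddagger_n$ for all $n$ only implies, a priori, that $\partial G$ fails to be connected and locally connected simultaneously; it does not by itself yield a disconnection. A compactness, pigeonhole, or diagonal argument cannot manufacture a clopen partition of a connected compactum that happens not to be locally connected --- connected but not locally connected compacta (the Warsaw circle, the topologist's sine curve) simply have no nontrivial clopen partitions, so ``separating $\overline{A}$ and $\overline{B}$ along a null-sequence'' is not an available move. Indeed in your own construction you have $\xi \in \overline{A}\cap\overline{B}$, so the pair $(\overline A, \overline B)$ does not separate, and the whole difficulty is precisely to rule out that $\partial G$ is connected with such a degenerate cut structure. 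This is why the theorem is attributed to Bowditch and Swarup as well as Bestvina--Mess, and why the paper cites rather than proves it.
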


Note that through the algorithm presented in \cite{dahmani2008detecting}, one can decide if a given hyperbolic group $G$ is one-ended, and if so this algorithm will output $n$ such that $\ddagger_n$ holds in the Cayley graph associated to the given generators of $G$. 
Their algorithm also applies to relatively hyperbolic groups, but we will not use that here.

\subsection{Quasiconvex subgroups and splittings}

Recall that a subset of a geodesic space is termed \textit{convex} if it contains any geodesic between any two of its points. This notion is too precise for the setting of groups, so we must ``quasify'' it. 

Let $Q \geq 0$, and let $G$ be a hyperbolic group with Cayley graph $\Gamma$. Then a subgroup $H \leq G$ is called $Q$\textit{-quasiconvex} if for every $h,h' \in H$, any geodesic path between $h, h'$ in $\Gamma$ is contained in the closed $Q$-neighbourhood of $H$. The quasiconvexity of $H$ does not depend on the choice of Cayley graph for $G$, given that $G$ is hyperbolic, and quasiconvex subgroups of hyperbolic groups are also hyperbolic. The following characterisation of quasiconvexity will be important.

\begin{lemma}\label{lem:qi-embed}
Let $G$ be a hyperbolic group, $H \leq G$ a finitely generated subgroup, and fix word metrics on these groups. Then $H$ is quasiconvex if and only if the inclusion map $H \into G$ is a $(\lambda, \varepsilon)$-quasi-isometric embedding for some $\lambda \geq 1$, $\varepsilon \geq 0$. 

Moreover, given a presentation of $G$ and generators of $H$, if $H$ is quasiconvex then these constants can be found algorithmically.  
\end{lemma}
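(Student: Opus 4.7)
The plan is to prove the two implications separately and then address the algorithmic claim, with the Morse Lemma doing the heavy lifting for the reverse direction.

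For the forward implication, I assume $H$ is $Q$-quasiconvex, let $T$ be the given finite generating set of $H$, and set $K := \max_{t \in T} d_G(1, t)$. The inequality $d_G(h, h') \leq K \cdot d_H(h, h')$ is immediate. For the other direction, take $h \in H$ with $n := d_G(1, h)$ and a $\Gamma$-geodesic $\gamma_0 = 1, \gamma_1, \ldots, \gamma_n = h$. By $Q$-quasiconvexity there exist $h_i \in H$ with $d_G(\gamma_i, h_i) \leq Q$ (setting $h_0 = 1$ and $h_n = h$), so $d_G(h_i, h_{i+1}) \leq 2Q + 1$. Define $M := \max\{d_H(1, h') : h' \in H, \, d_G(1, h') \leq 2Q + 1\}$, which is finite since balls in $\Gamma$ are finite. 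Then $d_H(h_i, h_{i+1}) \leq M$ for each $i$, and concatenating the corresponding $\Gamma_H$-paths gives $d_H(1, h) \leq Mn$, completing the quasi-isometric embedding.

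For the converse, suppose the inclusion is a $(\lambda, \varepsilon)$-quasi-isometric embedding. For any $h, h' \in H$, a geodesic between them in the Cayley graph of $H$ maps to a $(\lambda, \varepsilon)$-quasi-geodesic in $\Gamma$ with endpoints $h, h'$. By the Morse Lemma~\ref{lem:morse}, this image lies within a computable distance $D = D(\delta, \lambda, \varepsilon)$ of any $\Gamma$-geodesic between $h$ and $h'$. Since the image is contained in $H$, every such $\Gamma$-geodesic lies in the $D$-neighbourhood of $H$, giving $D$-quasiconvexity.

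For the algorithmic claim, I would search for the quasiconvexity constant $Q$ by enumerating $Q = 1, 2, \ldots$ in parallel with a finite certification procedure, then recover the QI constants from $Q$, $K$, and $M$ using the bounds above. The key ingredient is a local-to-global principle for quasiconvex subsets of hyperbolic spaces: there is a computable radius $R = R(\delta, Q)$ such that $H$ is $Q$-quasiconvex if and only if every $\Gamma$-geodesic between elements of $H$ at $\Gamma$-distance at most $R$ lies in the $Q$-neighbourhood of $H$. Checking this finite condition is effective using the solvability of the word problem in $G$ and an enumeration of words in $T$, since only finitely many elements of $H$ can lie within a bounded region of any given point. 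The main obstacle is precisely this local-to-global principle with explicit computable bounds; its proof is a standard but delicate piece of hyperbolic geometry, obtained by subdividing long $\Gamma$-geodesics between elements of $H$ into shorter pieces, applying the local condition on each piece, and propagating the quasiconvexity estimate globally via slimness of geodesic triangles.
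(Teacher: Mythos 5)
Your argument reconstructs what the paper simply cites: the equivalence is \cite[Ch.~III.$\Gamma$, Lem.~3.5]{bridson2013metric}, and the algorithmic step is Kapovich's algorithm \cite{kapovich1996detecting}. The forward direction (the bead argument giving $d_H(h,h') \leq M\,d_G(h,h')$) is complete and correct, and the overall plan matches the standard one. However, there are two places where the argument as written does not go through.

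In the converse direction you invoke Lemma~\ref{lem:morse} to conclude that the $\Gamma$-geodesic from $h$ to $h'$ lies in the $D$-neighbourhood of $H$. But Lemma~\ref{lem:morse} as stated in the paper only gives the containment of the quasi-geodesic inside the $D$-neighbourhood of the geodesic; you need the \emph{reverse} containment (geodesic inside the $D$-neighbourhood of the quasi-geodesic image, hence of $H$). This is true, but it requires the two-sided form of the Morse Lemma, which bounds the \emph{Hausdorff} distance between a quasi-geodesic and a geodesic with the same endpoints (e.g.\ \cite[Ch.~III.H, Thm.~1.7]{bridson2013metric}); as written, the conclusion is a non~sequitur from the cited lemma. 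Secondly, the local-to-global criterion you posit cannot hold as an ``if and only if'' with the same constant $Q$ on both sides: verifying the local condition at radius $R(\delta,Q)$ only certifies $Q'$-quasiconvexity for some larger (but still computable) $Q'$, and it is precisely this constant drift that Kapovich's argument must track. The algorithm survives once the output constant is adjusted, but the biconditional as stated would certify a quasiconvexity constant that $H$ need not actually satisfy.
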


The above follows immediately from Kapovich's algorithm \cite{kapovich1996detecting} for computing the quasiconvexity constant $Q$, together with \cite[ch.~III.$\Gamma$, Lem.~3.5]{bridson2013metric}.

Quasiconvex subgroups interact with the Gromov boundary in a very controlled way. Informally, if $H \leq G$ is a quasiconvex subgroup, then this induces an inclusion $\partial H \into \partial G$. More precisely, we need the idea of ``limit sets'' in the boundary. 

\begin{definition}
Let $G$ be a hyperbolic group with Cayley graph $\Gamma$, and $H$ some subgroup of $G$. Then define the \textit{limit set} of $H$ as
$$
\Lambda H = \{p \in \partial \Gamma : \textrm{$p = \lim_{n \to \infty} h_n$ for some $h_n \in H$}  \} .
$$
\end{definition}

It is a standard fact that if $H$ is quasiconvex then $\Lambda H \cong \partial H$, and $\Lambda H$ is a closed $H$-invariant subset of $\partial G$ (see e.g. \cite{kapovich2002boundaries}). 
Combining Lemmas~\ref{lem:visibility} and \ref{lem:morse} we immediately arrive at the following, which will play a big role in the next section.

\begin{lemma}\label{lem:eta}
Let $\Gamma$ be a $\delta$-hyperbolic Cayley graph of a hyperbolic group $G$, and let $H$ be a $Q$-quasiconvex subgroup. 
There exists some computable constant $\eta = \eta(\delta, Q) > 0$ with the following property. If $\gamma$ is a geodesic ray in $\Gamma$ based at 1 such that $\gamma(\infty) \in \Lambda H$, then $\gamma$ is contained in the closed $\eta$-neighbourhood of $H$. 
\end{lemma}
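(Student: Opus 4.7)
The plan is to combine the $Q$-quasiconvexity of $H$ with a standard fellow-traveling estimate for geodesics sharing a common initial point. Unpacking the hypothesis, $\gamma(\infty) \in \Lambda H$ means there is a sequence $(h_n) \subset H$ with $\lim_{n\to\infty} h_n = \gamma(\infty)$ in $\partial\Gamma$. For each $n$, pick a geodesic segment $\alpha_n = [1,h_n]$ in $\Gamma$. By $Q$-quasiconvexity of $H$, the entire segment $\alpha_n$ lies in the closed $Q$-neighbourhood of $H$, so it suffices to show that every point of $\gamma$ is close (in a uniform, computable way) to some $\alpha_n$.

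Fix an arbitrary $t \geq 0$; I want to control $d(\gamma(t), H)$. Since $h_n \to \gamma(\infty)$, the definition of convergence via the Gromov product gives $(h_n \cdot \gamma(\infty))_1 \to \infty$, so I can choose $n$ large enough that $(h_n \cdot \gamma(s))_1 > t$ for all sufficiently large $s$. Appealing to the standard $\delta$-hyperbolic fact that, whenever $(x\cdot y)_1 \geq T$, any two geodesics $[1,x]$ and $[1,y]$ remain within $2\delta$ of each other up to distance $T$ from $1$ (this is a direct consequence of the $\delta$-slim triangles property applied to $\triangle(1,x,y)$), the initial segments of $\gamma$ and $\alpha_n$ of length $t$ fellow-travel within $2\delta$. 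In particular, there is a point $p \in \alpha_n$ with $d(\gamma(t), p) \leq 2\delta$, and since $d(p, H) \leq Q$, we conclude $d(\gamma(t), H) \leq Q + 2\delta$.

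Setting $\eta := Q + 2\delta$ yields the desired uniform bound, and since $\delta$ is part of the data of the Cayley graph and $Q$ is computable from a presentation of $G$ and generators of $H$ by Lemma~\ref{lem:qi-embed} together with Kapovich's algorithm, $\eta$ is computable from $\delta$ and $Q$ as required.

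There is no serious obstacle here: the only point requiring a little care is invoking the fellow-traveling bound at distance $t$, for which one needs the auxiliary parameter $s$ large enough that $(h_n\cdot \gamma(s))_1$ exceeds $t$. This is a routine application of the definition of convergence in $\partial\Gamma$ together with the slim-triangles property, and does not require the full strength of the Morse lemma (Lemma~\ref{lem:morse}); the visibility statement (Lemma~\ref{lem:visibility}) is used only implicitly in guaranteeing that $\gamma$ itself exists as a geodesic ray based at $1$ hitting $\gamma(\infty)$.
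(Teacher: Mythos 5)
Your argument is correct in substance and takes a genuinely different, more elementary route than the one the paper gestures at. The paper cites Lemma~\ref{lem:visibility} and Lemma~\ref{lem:morse} (the Morse Lemma), which suggests the intended argument is: realise $\gamma(\infty) \in \Lambda H \cong \partial H$ as the limit of a geodesic ray $\mu$ in the word metric of $H$, push $\mu$ through the $(\lambda,\varepsilon)$-quasi-isometric embedding $H \hookrightarrow G$ to get a quasi-geodesic ray in $\Gamma$, and then invoke the Morse Lemma to conclude that $\gamma$ and $\iota(\mu) \subset H$ stay at bounded Hausdorff distance. That route passes through the QI-embedding constants $\lambda, \varepsilon$ (themselves computable from $\delta, Q$ via Lemma~\ref{lem:qi-embed}). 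Your route bypasses the Morse Lemma and the intrinsic geometry of $H$ entirely: you apply $Q$-quasiconvexity directly to the segments $[1,h_n]$ (legitimate since $1, h_n \in H$), and then fellow-travel $\gamma$ against these segments using the Gromov product. This is simpler and closer to first principles, and it correctly isolates the fact that visibility is not actually needed here since the ray $\gamma$ is given in the hypothesis.

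One small inaccuracy: the fellow-traveling constant is not quite $2\delta$ for $\delta$-\emph{slim} triangles. The standard statement (e.g.\ Bridson--Haefliger~III.H) is that if $c, c'$ are geodesics issuing from the same point and $t \leq (c(a)\cdot c'(b))_1$, then $d(c(t), c'(t)) \leq 4\delta$; the constant $2\delta$ is what you get under the stronger ``$\delta$-thin'' convention. So your $\eta$ should be $Q + 4\delta$ (or whatever fellow-traveling constant is compatible with the slim-triangles convention adopted in the paper). This does not affect the validity of the argument or the computability claim, since the corrected constant is still an explicit function of $\delta$ and $Q$, which is all the lemma requires.
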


Another important fact about quasiconvex subgroups of hyperbolic groups is that they are ``almost'' self-commensurating. More formally we have the following classical result due to Arzhantseva \cite{arzhantseva2001quasiconvex} and Kapovich--Short \cite{kapovich1996greenberg}. 

\begin{theorem}[{\cite[Thm.~2]{arzhantseva2001quasiconvex}}]\label{thm:finite-index-comm}
Let $G$ be a hyperbolic group and $H$ a quasiconvex subgroup. Then $H$ has finite index in $\Comm_G(H)$.  
\end{theorem}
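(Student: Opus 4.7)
The plan is to prove that every element of $\Comm_G(H)$ lies within a bounded neighbourhood of $H$ in $\Gamma$, and then to count cosets over a finite ball. Throughout I would assume $H$ is infinite, which the conclusion in fact requires (any finite $H$ gives $\Comm_G(H) = G$).

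First, I would show that $\Comm_G(H)$ setwise preserves $\Lambda H$. For $g \in \Comm_G(H)$, the subgroup $H' := H \cap g^{-1} H g$ has finite index in both $H$ and $g^{-1} H g$. A finite-index subgroup of a quasiconvex subgroup shares its limit set: writing the larger group as a finite union of cosets of the smaller, any sequence converging to a boundary point may, by pigeonhole and left-invariance of the metric, be replaced by one lying in a single coset, which after a bounded shift lies in the smaller subgroup. Hence $\Lambda H = \Lambda H' = \Lambda(g^{-1} H g) = g^{-1} \Lambda H$.

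Next I would bound $d(g, H)$ uniformly for $g \in \Comm_G(H)$. Let $C(\Lambda H) \subseteq \Gamma$ denote the union of all bi-infinite geodesics between points of $\Lambda H$. By Lemma~\ref{lem:eta} together with slimness of ideal triangles with apex $1$, $C(\Lambda H) \subseteq N_{\eta + \delta}(H)$. In the other direction, since $H$ is infinite hyperbolic one has $|\partial H| \geq 2$, and every vertex of $H$'s Cayley graph (in particular $1$) lies within a bounded distance of some bi-infinite geodesic in $H$ between distinct points of $\partial H = \Lambda H$; via Lemmas~\ref{lem:qi-embed} and~\ref{lem:morse} this pulls back to give $d(1, C(\Lambda H)) \leq D$ for some computable constant $D$. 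Since $\Comm_G(H)$ setwise preserves $C(\Lambda H)$, picking $y \in C(\Lambda H)$ with $d(1, y) \leq D$, we get $g y \in C(\Lambda H)$ and $d(g, g y) = d(1, y) \leq D$, hence $d(g, H) \leq D + \eta + \delta =: R$.

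Finally, I would count cosets. Writing each $g \in \Comm_G(H)$ as $g = hs$ with $h \in H$ and $s \in S := B_R(1) \cap \Comm_G(H)$, a finite set, I observe that for fixed $s \in S$, two products $h_1 s$ and $h_2 s$ lie in the same left coset of $H$ iff $h_1^{-1} h_2 \in H \cap s H s^{-1}$, so the cosets $\{h s H : h \in H\}$ are in bijection with $H / (H \cap s H s^{-1})$, which is finite by the very definition of the commensurator. Summing, $[\Comm_G(H) : H] \leq \sum_{s \in S} [H : H \cap s H s^{-1}] < \infty$.

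The main obstacle is the geometric bound $d(1, C(\Lambda H)) < \infty$: one must exhibit a bi-infinite geodesic in $\Gamma$ through points of $\Lambda H$ passing near $1$. This rests on the standard fact that in any infinite hyperbolic group every vertex of its Cayley graph lies within a bounded distance of some bi-infinite geodesic between boundary points, combined with the Morse lemma to transport the result from $H$ to $\Gamma$. Everything else is a finite bookkeeping argument over the ball $B_R(1)$.
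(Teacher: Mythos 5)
Your proof is correct, and since the paper states this result as a citation to Arzhantseva (and Kapovich--Short) without reproducing a proof, there is no in-paper argument to compare against. Your approach --- showing $\Comm_G(H)$ preserves $\Lambda H$ via the finite-index criterion, bounding $d(g,H)$ uniformly by squeezing the weak convex hull $C(\Lambda H)$ between $H$ and $N_{\eta+\delta'}(H)$, and then counting cosets over a ball --- is essentially the standard limit-set argument underlying the cited results. Two small remarks. First, you are right that $H$ must be assumed infinite: the statement as printed in the paper is silently false when $H$ is finite and $G$ is not, since then $\Comm_G(H)=G$; the cited sources do carry the infiniteness hypothesis. Second, your appeal to "the very definition of the commensurator" to conclude $[H:H\cap sHs^{-1}]<\infty$ literally gives the index condition for $H\cap s^{-1}Hs$; the version you need follows because $s^{-1}\in\Comm_G(H)$ as well (the commensurator is a subgroup), which is worth making explicit. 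Neither issue affects the validity of the argument.
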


\begin{corollary}\label{cor:maximal-fi}
Let $H$ be a quasiconvex subgroup of a hyperbolic group $G$. Then the commensurator $\Comm_G(H)$ is the unique maximal finite index overgroup of $H$. 
\end{corollary}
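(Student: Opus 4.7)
The plan is to combine Theorem~\ref{thm:finite-index-comm} with an elementary group-theoretic observation: every finite index overgroup of $H$ automatically sits inside $\Comm_G(H)$. Once this is established, the corollary follows immediately, since $\Comm_G(H)$ itself is a finite index overgroup of $H$ by Theorem~\ref{thm:finite-index-comm}.

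First I would set up the one non-trivial step. Suppose $K \leq G$ is any subgroup with $H \leq K$ and $[K : H] < \infty$. Pick $k \in K$. Both $H$ and $H^k = kHk^{-1}$ are subgroups of $K$ of index $[K:H]$, so their intersection $H \cap H^k$ has finite index in $K$ (the index of an intersection is bounded by the product of the indices), and hence also finite index in each of $H$ and $H^k$ individually. This is precisely the statement that $k \in \Comm_G(H)$. Thus $K \subseteq \Comm_G(H)$.

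Now the corollary is immediate. By Theorem~\ref{thm:finite-index-comm}, $\Comm_G(H)$ contains $H$ as a finite index subgroup, so $\Comm_G(H)$ is itself a finite index overgroup of $H$. By the paragraph above, any other finite index overgroup of $H$ is contained in $\Comm_G(H)$. Hence $\Comm_G(H)$ is the unique maximal finite index overgroup of $H$, as claimed.

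There is no real obstacle here; the depth of the statement lies entirely in Theorem~\ref{thm:finite-index-comm}, which supplies finiteness of $[\Comm_G(H) : H]$ in the quasiconvex hyperbolic setting. The corollary itself is a clean book-keeping consequence of that theorem together with the observation that the commensurator automatically absorbs finite index overgroups.
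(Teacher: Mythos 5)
Your proof is correct and follows essentially the same route as the paper's: show that any finite-index overgroup $K$ of $H$ has every element $k$ commensurating $H$ (since $H$ and $H^k$ are both finite-index in $K$, hence so is their intersection), so $K \subseteq \Comm_G(H)$, and then invoke Theorem~\ref{thm:finite-index-comm} to see that $\Comm_G(H)$ is itself such an overgroup. If anything, you are slightly more explicit than the paper about why $H \cap H^k$ has finite index in $K$.
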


\begin{proof}
Let $K \leq G$ be some finite index overgroup of $H$, 
and let $k \in K$. Clearly $H^k \subset K$, so $H^k \cap H \leq H \cap K = H$ and $H^k$ has finite index in $K$. Then
$$
|K : H \cap H^k| = |K:H| \cdot |H : H \cap H^k|, 
$$
so $|H : H \cap H^k| < \infty$. Similarly,  $|H^k : H \cap H^k| < \infty$, and so $k \in \Comm_G(H)$. 
\end{proof}

We conclude this section with a description of the vertex groups of a splitting in which the edge groups are quasiconvex. 
We say that a subgroup is \textit{full} if it does not have any finite index overgroups. 

\begin{proposition}[{\cite[Prop.~1.2]{bowditch1998cut}}]\label{prop:full-vertex-groups}
Let $G$ be a one-ended hyperbolic group, and suppose that $G$ splits over a quasiconvex subgroup as an amalgam or an HNN extension. Then the vertex groups of this splitting are full quasiconvex.
\end{proposition}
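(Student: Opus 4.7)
The plan is to exploit the $G$-action on the Bass--Serre tree $T$ of the splitting and treat the two conclusions separately. Fix a vertex $v \in T$ with vertex stabiliser $V$, and suppose first that $V \leq V' \leq G$ with $[V':V] < \infty$; the fullness claim asks us to show $V' = V$. The $V'$-orbit of $v$ has size at most $[V':V]$, so $V'$ has bounded orbits on $T$, and by Serre's fixed-point lemma (and the standard fact that $G$ acts on $T$ without inversions) $V'$ must fix some vertex $v_0 \in T$. If $v_0 = v$ we are done, since $V' \leq \Stab(v) = V$. Otherwise I would argue that any $g \in V' \setminus V$ cannot act hyperbolically on $T$---if it did, the cosets $g^k V$ would be pairwise distinct, violating $[V':V] < \infty$---so every such $g$ must be elliptic. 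Pushing this through, one deduces that $V$ coincides with an adjacent edge group $\Stab(e_1)$, placing us in a degenerate HNN-type situation; the one-endedness of $G$ together with hyperbolicity then excludes this (the resulting $V \rtimes \Z$-type extension would be two-ended unless $V$ were trivial), so no such $g$ can exist and $V' = V$.

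For quasiconvexity I would use the Morse lemma (Lemma~\ref{lem:morse}) together with Bass--Serre normal forms. First, vertex groups are finitely generated because $G$ is finitely presented, via Dunwoody accessibility applied to the splitting. Given $g_1, g_2 \in V$ and a geodesic $\alpha$ between them in a Cayley graph $\Gamma$ of $G$, I would rewrite $\alpha$ using coset representatives for edge groups as a concatenation $\alpha_1 \cdot h_1 \cdot \alpha_2 \cdots \alpha_k$, with each $\alpha_i$ lying inside a translate of a vertex group and each $h_i$ inside a translate of an edge group. Using the quasiconvexity constant $Q$ of the edge groups, the segments $h_i$ can be straightened to produce a path $\alpha'$ from $g_1$ to $g_2$ that is a $(\lambda, \varepsilon)$-quasi-geodesic in $\Gamma$, with $\lambda, \varepsilon$ depending only on $\delta$ and $Q$, and which lies inside a uniform neighbourhood of $V$. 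A second application of Lemma~\ref{lem:morse} to $\alpha$ and $\alpha'$ then forces $\alpha$ itself into a uniform neighbourhood of $V$, yielding the quasiconvexity of $V$.

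The fullness argument is short in outline but hides a delicate case analysis in the degenerate ``HNN with edge group equal to vertex group'' situation, which ultimately collapses under one-endedness and hyperbolicity. The genuinely hard step will be the quasiconvexity half: verifying that the normal-form path $\alpha'$ really is a quasi-geodesic in $\Gamma$ with constants independent of the choice of $g_1, g_2 \in V$ is a combinatorial bookkeeping exercise that relies essentially on the quasiconvexity of the edge groups together with the Morse lemma, and this is the content of a Bestvina--Feighn-style combination argument in this setting.
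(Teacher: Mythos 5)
Your proof attempt has several genuine gaps, the most serious of which is in the \emph{fullness} half, since the quasiconvexity half can be (and in the paper is) simply cited from Bowditch \cite[Prop.~1.2]{bowditch1998cut} rather than re-proved.

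For fullness, your observation that $V'$ has bounded orbits on $T$ and so fixes a vertex (or, more carefully, stabilises a finite subtree and hence its centre, which is a vertex since there are no inversions) is the right starting point, and matches the paper. But the next steps are not sound. First, the remark that $g \in V' \setminus V$ ``cannot act hyperbolically'' is vacuous: $g$ lies in $V'$, which fixes a vertex, so $g$ is automatically elliptic; this does not by itself force anything. Second, the phrase ``pushing this through, one deduces that $V$ coincides with an adjacent edge group'' skips the content of the argument: what actually needs to be shown is that there is a vertex $w$ adjacent to $v$ with $G_v \le G_w \le V'$, and that in the amalgam case this forces $G_e = G_v$ and hence $G_w = G$ (contradicting minimality), while in the HNN case it forces $G_w = G_v = G_e$ after invoking the commensurator result (Corollary~\ref{cor:maximal-fi}) to rule out a strictly increasing chain $G_v < G_v^g < G_v^{g^2} < \cdots$ of finite-index inclusions. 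Your sketch silently conflates these two cases. Third, and most importantly, the claim that ``the resulting $V\rtimes\Z$-type extension would be two-ended unless $V$ were trivial'' is false: mapping tori of pseudo-Anosov surface homeomorphisms give one-ended hyperbolic groups of exactly this form with $V$ an infinite surface group. The correct way to rule this case out — and what the paper does — is to note that $G_v$ is then normal in $G$, invoke the Kapovich–Short result that an infinite quasiconvex normal subgroup of a hyperbolic group must have finite index, and then derive a contradiction with minimality of the action. Note that this step uses the quasiconvexity of the \emph{vertex} group $G_v$, so the fullness argument logically depends on the quasiconvexity half; in your proposal the dependency runs the wrong way.

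For quasiconvexity, your plan to re-derive it via normal forms and a Bestvina–Feighn-style combination argument is reasonable in spirit, but as you yourself note it is ``the genuinely hard step,'' and the sketch does not carry it out: straightening the normal-form path into a quasi-geodesic with uniform constants is precisely the content of the theorem, and asserting it as a ``bookkeeping exercise'' does not constitute a proof. The paper sidesteps this entirely by citing Bowditch, which is the right call here.
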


\begin{proof}
The fact that they are quasiconvex is proven in \cite{bowditch1998cut}, so we will just show here that they are necessarily also full. 

Let $G$ act simplicially, minimally, and without inversions on a tree $T$ with a single edge orbit, such that the edge stabilisers are quasiconvex. Let $v$ be a vertex of $T$, and let $G_v$ denote the stabiliser of $v$. 
Suppose that $G_v$ is not full, so there is finite index overgroup $K > G_v$. Then the $K$-orbit of $v$ is finite, so there is some finite subtree $\Sigma \subset T$ stabilised by $K$. 
Let $u$ be the geometric center of $\Sigma$. If $u$ is a midpoint of an edge $e$, then it must be the case that the action of $K$ inverts this edge. We assumed that $G$ acts without inversions so this cannot happen, and so $u$ is a vertex of $T$. 
Since $u$ is distinct from $v$ (else $K = G_v$), we see that there is a vertex $w$ adjacent to $v$ such that 
$$
G_v \leq G_w \leq K. 
$$
In particular, $G_v$ has finite index in $G_w$. 

Let $e$ be the edge connecting $v$ to $w$. We split into two cases. Firstly, suppose that this splitting is an amalgam. Then $G_e = G_v \cap G_w = G_v$, but then the given splitting is of the form $G = G_w \ast_{G_v} G_v$, and so $G_w = G$ and $G$ fixes a point on $T$. 

Suppose instead that $G$ acts on $T$ with a single vertex orbit, i.e. that this splitting is an HNN extension. Then there is some $g \in G$ such that $G_w = G_v^g$. Suppose that $G_w > G_v$, then we find the following sequence
$$
G_v < G_v^{g} < G_v^{g^2} < G_v^{g^3} < G_v^{g^4} < \ldots,
$$
where each has finite index in the last. But then $G_v$ has no maximal finite index overgroup, which  contradicts Corollary~\ref{cor:maximal-fi} since $G_v$ is quasiconvex. Thus $G_w = G_v = G_e$. In particular, by translating this picture around we see that the stabiliser of every edge in $T$ is equal to the stabiliser of either of its endpoints. Since $T$ is connected, it follows that $G_v$ actually fixes $T$ pointwise and is in fact the kernel of the action of $G$ on $T$. Thus, $G_v$ is a normal subgroup of $G$. 
Since $G_v$ is quasiconvex, this can only happen if $G_v$ is finite or has finite index in $G$ \cite{kapovich1996greenberg}. By assumption, $G$ is one-ended and so $G_v$ has finite index in $G$. But then $G$ must act on $T$ with a global fixed point. With this contradiction, we conclude that $G_v$ must be full. 
\end{proof}

The following definition will be helpful. 

\begin{definition}[Lonely subgroups]
    Let $G$ be a group, then a subgroup $H \leq G$ is said to be \textit{lonely} if there does not exist another subgroup $H'$ distinct from $H$ such that $H$ is commensurable with $H'$. 
\end{definition} 

We then have the following dichotomy, which will play a central role in our algorithmic results. 

\begin{proposition}\label{prop:quasiconvex-splittings}
Let $G$ be a hyperbolic group and $H$ a quasiconvex subgroup such that either 
\begin{enumerate}
    \item $H \neq \Comm_G(H)$, or
    \item $H$ is lonely.
\end{enumerate}
Then $H$ is associated with a splitting if and only if there is some semi-nested $H$-almost invariant set $X$.
\end{proposition}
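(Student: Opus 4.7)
The plan is to prove the two directions separately. The reverse direction $(\Leftarrow)$ is immediate from Theorem~\ref{thm:nsss}: any non-trivial semi-nested $H$-almost invariant subset forces $G$ to split over a subgroup commensurable with $H$, which is exactly to say that $H$ is associated with a splitting.

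For $(\Rightarrow)$, I would start by supposing $G$ splits over some subgroup $H'$ commensurable with $H$, and let $X' \subset G$ be the nested $H'$-almost invariant subset furnished by Example~\ref{eg:splitting-nested} applied to the Bass--Serre tree $T$. Set $K := H \cap H'$, which has finite index in both $H$ and $H'$. Under hypothesis (2), $H$ being lonely forces $H' = H$, and so $X'$ itself is a nested (hence semi-nested) $H$-almost invariant subset, completing this case.

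Under hypothesis (1), when $H \neq \Comm_G(H)$, I would define $X := HX' = \bigcup_{hK \in H/K} hX'$, a finite union of translates of $X'$. This $X$ is $H$-invariant by construction, and $H$-almost invariance follows from the $H$-equivariance of the projection $\pi_H : G \to H \backslash G$: one has $\pi_H(X) = \pi_H(X')$ and $\pi_H(Xg) = \pi_H(X'g)$, so
\[
\pi_H(Xg) \triangle \pi_H(X) \;\subset\; \pi_H(X'g \triangle X'),
\]
which is finite because $X'g \triangle X'$ is $H'$-finite and hence $H$-finite (since $K$ has finite index in both $H$ and $H'$, any set covered by finitely many $H'$-cosets is covered by finitely many $H$-cosets). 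Non-triviality of $X$ is checked using the tree structure of $T$: the $H$-orbit of the halfspace $VT_u$ cannot cover all of $VT$, so $X^*$ still meets infinitely many $H$-cosets.

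The crux is proving that $X$ is semi-nested, i.e.\ that $gX$ crossing $X$ forces $g \in \Comm_G(H)$. Decomposing,
\[
gX \cap X \;=\; \bigcup_{h_1, h_2 \in H/K} (gh_1X') \cap (h_2X'),
\]
each summand is the intersection of two halfspaces in $T$, hence empty, equal to $gh_1X'$, or equal to $h_2X'$. Performing this decomposition for all four crossing quadrants $gX \cap X$, $gX \cap X^*$, $gX^* \cap X$, $gX^* \cap X^*$, the goal is to argue that $H$-infiniteness in each quadrant forces some pair $(h_1, h_2)$ with $gh_1 e = h_2 e$ in $T$, which yields $g \in h_2 H' h_1^{-1} \subset HH'H \subset \Comm_G(H)$ (using both $H \leq \Comm_G(H)$ and $H' \leq \Comm_G(H)$, as $H'$ is commensurable with $H$). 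The main obstacle will be characterising precisely when an intersection piece $kX'$ is $H$-infinite: I expect this to hold if and only if $H^k$ is commensurable with $H'$, equivalently $k \in \Comm_G(H)$, which will need a careful left-coset comparison of $H^k$-cosets and $H'$-cosets inside $X'$. With that in hand, the combinatorial accounting across the four quadrants forces the $gh_1 e = h_2 e$ configuration and completes the proof.
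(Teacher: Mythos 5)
Your reverse direction is the same as the paper's, and in the lonely case the two arguments coincide. For the forward direction under hypothesis (1), however, there is a structural difference and a real gap.

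The paper's proof divides the forward direction into two subcases depending on whether $H'$ has finite or infinite index in the vertex groups of the given splitting. In the infinite-index subcase, it never touches the crossing combinatorics at all: it re-groups the given splitting so that the new edge group is $C := \Comm_G(H)$ (e.g.\ $G = A \ast_{H'} B = (A \ast_{H'} C) \ast_C B$), and then the nested $C$-almost invariant set from Example~\ref{eg:splitting-nested} is automatically nested (hence semi-nested) as an $H$-almost invariant set. Only in the finite-index subcase does it construct an explicit $X$, using Proposition~\ref{prop:full-vertex-groups} and Corollary~\ref{cor:maximal-fi} to identify a vertex group with $\Comm_G(H)$ and then taking $X = \phi^{-1}(VT_v)$, whose crossings are visibly confined to $G_v = \Comm_G(H)$ because $H$ fixes $v$. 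Your proposal skips this case division entirely. In particular, when $H'$ has infinite index in the vertex groups, $H$ need not fix the vertex $v$ abutting $e$, the $H$-translates of $e$ need not all share an endpoint, and it is not clear that $HX'$ is semi-nested; you give no argument for this regime, which the paper handles by a different route.

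Even in the regime where the construction makes sense, the verification has gaps. First, the claim that each summand $gh_1X' \cap h_2 X'$ is ``empty, equal to $gh_1X'$, or equal to $h_2X'$'' misses a fourth possibility: in a tree two halfspaces $A,B$ coming from distinct edges satisfy that exactly one of $A\cap B$, $A\cap B^c$, $A^c\cap B$, $A^c\cap B^c$ is empty, and in the case $A^c\cap B^c=\emptyset$ the intersection $A\cap B$ is a proper, non-trivial subset of both. Second, the displayed decomposition is only valid for the quadrant $gX\cap X$: since $X^* = (HX')^* = \bigcap_h hX'^*$ is an \emph{intersection}, not a union, the other three quadrants have the more awkward form $\bigcup_{h_1}\bigcap_{h_2}(\cdots)$, and $H$-infiniteness of such an expression does not reduce to $H$-infiniteness of a single pairwise piece. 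Third, the ``combinatorial accounting'' that is supposed to force a pair $(h_1,h_2)$ with $gh_1 e = h_2 e$ is stated as a goal and never carried out; it is exactly the hard part, and the paper sidesteps it entirely by its two-case strategy. As it stands, the argument for semi-nestedness does not go through.
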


\begin{proof}
Applying Theorem~\ref{thm:nsss}, it is clear that if we have a semi-nested $H$-almost invariant set then $H$ is associated to a splitting. 

Conversely, suppose that $G$ splits over a subgroup $H'$ commensurable with $H$. If $H$ is lonely then the result is clear as we necessarily split over $H$, thus there is a nested $H$-almost invariant set as in Example~\ref{eg:splitting-nested}. 
So assume instead that $H \neq \Comm_G(H)$. We now split into two cases. 

Firstly, suppose that $H'$ has infinite index in the vertex group(s) of its splitting. We have by Theorem~\ref{thm:finite-index-comm} that $\Comm_G(H)$ is commensurable with $H'$, so it will be strictly contained in a vertex group of this splitting, up to conjugacy. We can then transform our splitting over $H'$ easily into a splitting over $C := \Comm_G(H)$ via e.g.
$$
G = A \ast_{H'} B = A \ast_{H'} (C \ast_C B) = (A \ast_{H'} C) \ast_C B.
$$
The HNN case is similar. Since we have a non-trivial splitting over $C$, there is an almost nested $C$-almost invariant set by Example~\ref{eg:splitting-nested}. Clearly such a set is also $H$-almost invariant, and so we are done. 

Suppose instead that $H'$ has finite index in one of the vertex stabilisers. 
By Proposition~\ref{prop:full-vertex-groups} we know that the vertex groups of the splitting over $H'$ are full, so by Corollary~\ref{cor:maximal-fi} we deduce that one vertex group of the splitting over $H'$ is precisely $\Comm_G(H)$. 
Let $T$ be the Bass-Serre tree of this splitting. Then $H$ fixes a vertex of $T$, namely the vertex stabilised by $\Comm_G(H)$. 
Let $e \in ET$ be the edge stabilised by $H'$ with endpoints $v, u \in VT$, and suppose without loss of generality that $H$ fixes $v$. Then the $H$-orbit of $e$ is a finite collection of edges abutting $v$. Since $H \neq \Comm_G(H)$ there are multiple $H$-orbits of edges abutting $v$. 
Let $F = \set {he} {h \in H}$. Then deleting the interior of the edges in $F$ separates $T$ into a disjoint collection of subtrees. Let $T_v$ be the subtree which contains $v$, then set $X = \phi^{-1}(VT_v)$, where $\phi : G \to T$ is the map constructed in Example~\ref{eg:splitting-nested}. It is clear that $X$ is a non-trivial $H$-almost invariant set, and 
$$
\{ g \in G  : \textrm{$gX$ crosses $X$} \} \subset G_v = \Comm_G(H),
$$
where $G_v \leq G$ is the stabiliser of $v$.
Thus $X$ is semi-nested by definition. 
\end{proof}

\begin{remark}
    It might seem strange that in the hypothesis of Proposition~\ref{prop:quasiconvex-splittings} we ask that $H$ be distinct from its commensurator. This rules out, for example, maximal cyclic subgroups (which are often associated with splittings). This exclusion can be explained by an example. 
    
    Suppose our group $G$ splits as an amalgam $G= A \ast_C  B$ where $|B : C | < \infty$. Then the vertex group $B$ is associated with a splitting, by definition. However, it is certainly not clear if $G$ contains some semi-nested $B$-invariant subset (it is helpful to check where the construction given in the above proof fails in this case). This technicality can be overcome by replacing our given quasiconvex subgroup with one of its finite index subgroups (if one exists), which explains why it is helpful to assume that our given subgroup is residually finite. 
\end{remark}

\section{Limit Set Complements}

The key aim of this section is to understand the connectivity of $\partial G - \Lambda H$ via local geometry. In particular, we will generalise a toolbox introduced by Barrett in \cite{barrett2018computing}. 

\subsection{Annular neighbourhoods}

The Gromov boundary encodes a wide variety of data about our group, but we cannot access it directly. Thus, we must characterise the presence of the topological features we care about via some kind of equivalent local geometric feature in the Cayley graph.

For what follows we will need to fix some notation. Throughout this section $G$ will be a one-ended hyperbolic group with $\delta$-hyperbolic Cayley graph $\Gamma$, and $H$ will be a $Q$-quasiconvex subgroup with a fixed finite generating set $Y$. Note that given a presentation of a hyperbolic group, one can effectively compute the hyperbolicity constant $\delta$ of the corresponding Cayley graph via \cite{papasoglu1996algorithm}. 
Given $x, y \in \Gamma$, we may denote by $[x,y]$ a choice of geodesic path between $x$ and $y$.
Let $C = 3\delta$ as in  Lemma~\ref{lem:visibility}, $D = D(1,0) $ as in Lemma~\ref{lem:morse}. 
Fix a visual metric $\rho $ on $\partial G$, with parameter $a$ and multiplicative constants $k_1$, $k_2$. 
We will make use of some methods from \cite{bestvina1991boundary}, so recall Theorem~\ref{thm:bestvina-mess} and fix $n\geq 0$ such that $\ddagger_n$ holds in $\Gamma$. 
Finally, choose $\lambda \geq 1$, $\varepsilon \geq 0$ such that the inclusion map $H \into G$ is a $(\lambda, \varepsilon)$-quasi-isometric embedding as in Lemma~\ref{lem:qi-embed}, and $\eta = \eta(\delta, Q)$ as in Lemma~\ref{lem:eta}.

\begin{lemma}\label{lem:liminf}
Let $p \in \partial G - \Lambda H$. Then for $\gamma \in p$, we have that 
$$
\lim_{t \to \infty} d(\gamma(t), H) = \infty.
$$
\end{lemma}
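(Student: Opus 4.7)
My plan is to argue by contradiction. Suppose the conclusion fails; since $d(\gamma(t), H)$ is nonnegative, this means there exist $R \geq 0$ and a sequence $t_n \to \infty$ with $d(\gamma(t_n), H) \leq R$ for all $n$. For each $n$, pick $h_n \in H$ realising this bound, so $d(h_n, \gamma(t_n)) \leq R$. Note that $d(1, h_n) \geq d(1, \gamma(t_n)) - R = t_n - R \to \infty$, so the $h_n$ escape every bounded region.

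The goal is then to show that $h_n \to p$ in $\partial G$, which would place $p$ in the closed set $\Lambda H$ and contradict our hypothesis that $p \in \partial G - \Lambda H$. For this I use the sequential description of the boundary. Since $\gamma(t_n) \to p$, we have $\liminf_{n,m \to \infty} (\gamma(t_n) \cdot \gamma(t_m))_1 = \infty$. A direct computation with the triangle inequality gives
$$
(h_n \cdot \gamma(t_m))_1 \;\geq\; \tfrac{1}{2}\bigl( (t_n - R) + t_m - (R + |t_m - t_n|) \bigr) \;=\; \min(t_n, t_m) - R,
$$
which tends to infinity as $n, m \to \infty$. A single application of the $\delta$-hyperbolic four-point inequality
$$
(h_n \cdot h_m)_1 \;\geq\; \min\bigl((h_n \cdot \gamma(t_m))_1, \, (\gamma(t_m) \cdot h_m)_1\bigr) - \delta
$$
then forces $\liminf_{n,m}(h_n \cdot h_m)_1 = \infty$, so $(h_n)$ tends to infinity in $\Gamma$ and defines a boundary point, and a similar Gromov-product estimate shows this boundary point equals $p$. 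Since $h_n \in H$, we get $p \in \Lambda H$, the desired contradiction.

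The argument is essentially soft and does not really use quasiconvexity of $H$ beyond the fact (recorded in the excerpt) that $\Lambda H$ is a closed subset of $\partial G$; the content is entirely in the bookkeeping of Gromov products. The only mildly delicate point is making sure the sequence $(h_n)$ itself converges in the sequential boundary, but this is handled by the hyperbolic inequality as above once one has the mixed estimate $(h_n \cdot \gamma(t_m))_1 \to \infty$. No step here presents a genuine obstacle, so this should be a short proof in the final write-up.
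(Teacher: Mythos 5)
Your proof is correct, and it takes a genuinely different route from the paper's. The paper's argument leans on quasiconvexity in an essential way: it fixes the word metric on $H$, chooses geodesic rays $\mu_i$ in $H$ passing near $h_i$ (using hyperbolicity of $H$, which comes from quasiconvexity), pushes them forward to $(\lambda,\varepsilon)$-quasi-geodesic rays in $\Gamma$ via the QI embedding, replaces these with geodesic rays $\nu_i$ via the Morse Lemma, checks $\nu_i(\infty)\in\Lambda H$ and $\nu_i(\infty)\to p$, and then invokes the closedness of $\Lambda H$ to conclude $p\in\Lambda H$. Your argument bypasses all of this machinery: the four-point inequality shows directly that $(h_n)$ is a Gromov sequence equivalent to $(\gamma(t_m))$, i.e.\ $\lim h_n = p$, so $p\in\Lambda H$ falls out of the \emph{definition} of the limit set as the set of sequential limits of points of $H$. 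In particular you do not actually need the closedness of $\Lambda H$ that you cite at the end (that's needed in the paper's argument because the points it produces in $\Lambda H$ are $\nu_i(\infty)$, not the limit $p$ itself), and your proof does not use quasiconvexity at all, so it establishes the statement for an arbitrary subgroup $H\le G$. The only stylistic caveat is that the $\delta$ appearing in the four-point inequality and the $\delta$ of the slim-triangles condition differ by a bounded factor; since the argument only needs the $\liminf$ to diverge, this is harmless, but it is worth making explicit in a final write-up.
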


\begin{proof}
Suppose not, so $\liminf_{t \to \infty} d(\gamma(t), H) < \infty$. Then there exists a sequence $t_i \to \infty$, and some $A \geq 0$ such that $d_G(\gamma(t_i), H) < A$ for all $i$. Let $h_i \in H$ be such that $d_G(\gamma(t_i), h_i) < A$.

Recall that we fixed a generating set $Y$ of $H$, and thus a word metric on $H$. Let $\delta_H$ be such that $H$ is $\delta_H$-hyperbolic.
Choose geodesic rays $\mu_i$ in $H$ based at $1$ such that $\mu_i$ passes within $3\delta_H$ of $h_i$. Let $\iota : H \into G$ be the inclusion map, which is a $(\lambda, \varepsilon)$-quasi-isometric embedding. Then $\iota(\mu_i)$ is a $(\lambda,\varepsilon)$-quasi-geodesic ray at 1. Let $\nu_i$ be a geodesic ray in $\Gamma$ with the same endpoints as $\iota(\mu_i)$. Then
$$
d(\nu_i, \gamma(t_i)) < A + 3\lambda \delta_H + \varepsilon + D(\lambda, \varepsilon),
$$
for every $i$. It is now routine to see that $\nu_i(\infty) \to p$ in $\partial G$. But $\Lambda H$ is a closed subset of $\partial G$, so $p \in \Lambda H$. This is a contradiction, so the lemma follows. 
\end{proof}

We now introduce some notation borrowed from \cite{barrett2018computing}. Let $0 \leq r \leq K \leq R$ be constants, where $R$ is possibly infinite.  Denote by $N_R(H)$ the closed $R$-neighbourhood of $H$ in $\Gamma$, and by $N_{r,R}(H)$ the annular region
$$
N_{r,R}(H) = \set{x \in \Gamma}{r \leq d(x,H) \leq R}. 
$$
Let $C_K(H) = \set{x \in \Gamma}{d(x,H) =K}$, and finally let $A_{r,R,K}(H)$ be the union of the components of $N_{r,R}(H)$ which intersect $C_K(H)$. 
The geometric significance of $A_{r,R,K}(H)$ is that we take an annular region about $H$, but discard the components which are ``too close'' to $H$. This is illustrated in Figure~\ref{fig:arrk-fig}. Following Barrett, we will demonstrate a correspondence between the components of $\partial G - \Lambda H$ and the  components of $A_{r,R,K}(H)$, provided $r$, $R$, and $K$ are sufficiently large. 

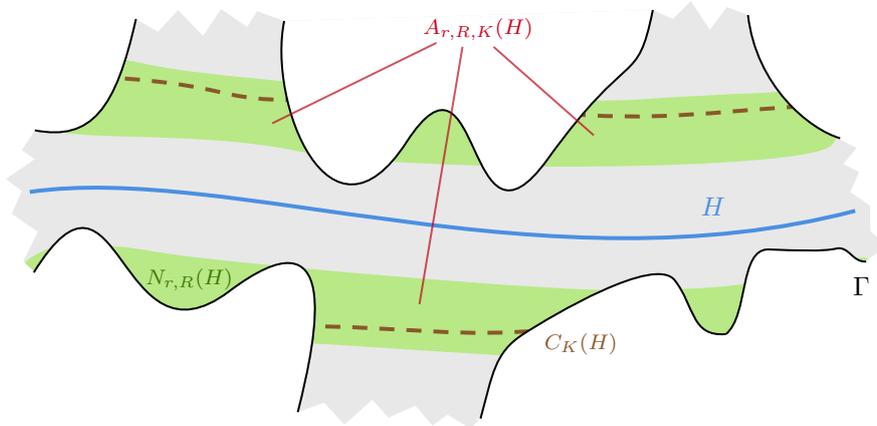
\begin{figure}[b]
    \centering

 
\tikzset{
pattern size/.store in=\mcSize, 
pattern size = 5pt,
pattern thickness/.store in=\mcThickness, 
pattern thickness = 0.3pt,
pattern radius/.store in=\mcRadius, 
pattern radius = 1pt}
\makeatletter
\pgfutil@ifundefined{pgf@pattern@name@_c38ios2z6}{
\pgfdeclarepatternformonly[\mcThickness,\mcSize]{_c38ios2z6}
{\pgfqpoint{0pt}{0pt}}
{\pgfpoint{\mcSize+\mcThickness}{\mcSize+\mcThickness}}
{\pgfpoint{\mcSize}{\mcSize}}
{
\pgfsetcolor{\tikz@pattern@color}
\pgfsetlinewidth{\mcThickness}
\pgfpathmoveto{\pgfqpoint{0pt}{0pt}}
\pgfpathlineto{\pgfpoint{\mcSize+\mcThickness}{\mcSize+\mcThickness}}
\pgfusepath{stroke}
}}
\makeatother

 
\tikzset{
pattern size/.store in=\mcSize, 
pattern size = 5pt,
pattern thickness/.store in=\mcThickness, 
pattern thickness = 0.3pt,
pattern radius/.store in=\mcRadius, 
pattern radius = 1pt}
\makeatletter
\pgfutil@ifundefined{pgf@pattern@name@_84xy0b260}{
\pgfdeclarepatternformonly[\mcThickness,\mcSize]{_84xy0b260}
{\pgfqpoint{0pt}{0pt}}
{\pgfpoint{\mcSize+\mcThickness}{\mcSize+\mcThickness}}
{\pgfpoint{\mcSize}{\mcSize}}
{
\pgfsetcolor{\tikz@pattern@color}
\pgfsetlinewidth{\mcThickness}
\pgfpathmoveto{\pgfqpoint{0pt}{0pt}}
\pgfpathlineto{\pgfpoint{\mcSize+\mcThickness}{\mcSize+\mcThickness}}
\pgfusepath{stroke}
}}
\makeatother

 
\tikzset{
pattern size/.store in=\mcSize, 
pattern size = 5pt,
pattern thickness/.store in=\mcThickness, 
pattern thickness = 0.3pt,
pattern radius/.store in=\mcRadius, 
pattern radius = 1pt}
\makeatletter
\pgfutil@ifundefined{pgf@pattern@name@_mfnqd9z6g}{
\pgfdeclarepatternformonly[\mcThickness,\mcSize]{_mfnqd9z6g}
{\pgfqpoint{0pt}{0pt}}
{\pgfpoint{\mcSize+\mcThickness}{\mcSize+\mcThickness}}
{\pgfpoint{\mcSize}{\mcSize}}
{
\pgfsetcolor{\tikz@pattern@color}
\pgfsetlinewidth{\mcThickness}
\pgfpathmoveto{\pgfqpoint{0pt}{0pt}}
\pgfpathlineto{\pgfpoint{\mcSize+\mcThickness}{\mcSize+\mcThickness}}
\pgfusepath{stroke}
}}
\makeatother
\tikzset{every picture/.style={line width=0.75pt}} 

\begin{tikzpicture}[x=0.75pt,y=0.75pt,yscale=-1,xscale=1]

\draw  [draw opacity=0][fill={rgb, 255:red, 255; green, 255; blue, 255 }  ,fill opacity=1 ] (81,20) -- (527.5,20) -- (527.5,238.5) -- (81,238.5) -- cycle ;
\draw  [draw opacity=0][fill={rgb, 255:red, 232; green, 232; blue, 232 }  ,fill opacity=1 ] (221.5,31.5) -- (405.5,26) -- (415.5,30.5) -- (424.5,21.5) -- (434,36) -- (442.5,24.5) -- (453,28.5) -- (499,90.5) -- (507.5,95) -- (500.5,103.5) -- (508.5,107.5) -- (504,117.5) -- (514,124.5) -- (514,137) -- (522.5,145.5) -- (512,152.5) -- (442,189) -- (319.5,231.5) -- (314.5,236.5) -- (303,229.5) -- (295.5,235) -- (287,225) -- (273.5,231) -- (265.5,222) -- (251.5,234.5) -- (241.5,223.5) -- (228.27,228.24) -- (96.5,158) -- (91,153) -- (98.5,137.5) -- (85.5,128.5) -- (90.5,117) -- (82.5,111) -- (97,101) -- (90.5,96.5) -- (97,86.5) -- (147.5,30.5) -- (158.5,37) -- (168.5,26) -- (181,39) -- (190,25.5) -- (203,34.5) -- (208,26.5) -- cycle ;
\draw  [color={rgb, 255:red, 184; green, 233; blue, 134 }  ,draw opacity=1 ][fill={rgb, 255:red, 184; green, 233; blue, 134 }  ,fill opacity=1 ] (395.5,167.5) .. controls (435,166.5) and (453.5,166.5) .. (493,155) .. controls (532.5,143.5) and (507,158.5) .. (487,187.5) .. controls (467,216.5) and (271,192.5) .. (196,191.5) .. controls (121,190.5) and (81.5,160) .. (94.5,151.5) .. controls (107.5,143) and (114,144) .. (130.5,145) .. controls (147,146) and (138.44,145.22) .. (166.5,149.5) .. controls (194.56,153.78) and (356,168.5) .. (395.5,167.5) -- cycle ;
\draw  [color={rgb, 255:red, 184; green, 233; blue, 134 }  ,draw opacity=1 ][fill={rgb, 255:red, 184; green, 233; blue, 134 }  ,fill opacity=1 ] (398,71.5) .. controls (450,68) and (466.5,68.5) .. (473,67) .. controls (479.5,65.5) and (505.5,80.5) .. (494.5,93.5) .. controls (483.5,106.5) and (283,108) .. (228.5,96.5) .. controls (174,85) and (103.5,93) .. (103.5,83) .. controls (103.5,73) and (130,55.75) .. (132.5,52) .. controls (135,48.25) and (140.44,52.22) .. (168.5,56.5) .. controls (196.56,60.78) and (346,75) .. (398,71.5) -- cycle ;
\draw [color={rgb, 255:red, 74; green, 144; blue, 226 }  ,draw opacity=1 ][line width=1.5]    (94.5,117.5) .. controls (194.5,103) and (351,168) .. (506.5,127) ;
\draw [color={rgb, 255:red, 139; green, 87; blue, 42 }  ,draw opacity=1 ][line width=1.5]  [dash pattern={on 5.63pt off 4.5pt}]  (128.5,59.5) .. controls (210,64.5) and (178.5,74) .. (254.5,70.5) ;
\draw [color={rgb, 255:red, 139; green, 87; blue, 42 }  ,draw opacity=1 ][line width=1.5]  [dash pattern={on 5.63pt off 4.5pt}]  (351,77.5) .. controls (412.5,82.5) and (432.5,77) .. (480,74.5) ;
\draw [color={rgb, 255:red, 139; green, 87; blue, 42 }  ,draw opacity=1 ][line width=1.5]  [dash pattern={on 5.63pt off 4.5pt}]  (228.5,185) .. controls (280.5,185) and (310.5,191.5) .. (355.5,186.5) ;
\draw  [color={rgb, 255:red, 255; green, 255; blue, 255 }  ,draw opacity=1 ][fill={rgb, 255:red, 255; green, 255; blue, 255 }  ,fill opacity=1 ] (96.5,158) -- (228.27,228.24) -- (208,236.5) -- (86,235.5) -- (91,153) -- cycle ;
\draw  [color={rgb, 255:red, 255; green, 255; blue, 255 }  ,draw opacity=1 ][fill={rgb, 255:red, 255; green, 255; blue, 255 }  ,fill opacity=1 ] (512,152.5) -- (522.5,145.5) -- (524.5,234) -- (319.5,231.5) -- (442,189) -- cycle ;
\draw  [color={rgb, 255:red, 255; green, 255; blue, 255 }  ,draw opacity=1 ][fill={rgb, 255:red, 255; green, 255; blue, 255 }  ,fill opacity=1 ] (502,29) -- (518.5,35.5) -- (507.5,95) -- (499,90.5) -- (453,28.5) -- cycle ;
\draw  [color={rgb, 255:red, 255; green, 255; blue, 255 }  ,draw opacity=1 ][fill={rgb, 255:red, 255; green, 255; blue, 255 }  ,fill opacity=1 ] (147.5,30.5) -- (97,86.5) -- (90,64.5) -- (91.5,49.5) -- (93,29.5) -- cycle ;
\draw [fill={rgb, 255:red, 255; green, 255; blue, 255 }  ,fill opacity=1 ]   (512,152.5) .. controls (505,153) and (503.5,144.5) .. (497,146) .. controls (490.5,147.5) and (478.5,147) .. (463.45,146.43) .. controls (448.4,145.86) and (453.5,179.5) .. (442,189) ;
\draw  [draw opacity=0][pattern=_c38ios2z6,pattern size=5.699999999999999pt,pattern thickness=0.75pt,pattern radius=0pt, pattern color={rgb, 255:red, 208; green, 2; blue, 27}] (189.5,59) -- (223,63) -- (237,97) -- (177,91) -- (111,88) -- (142,51) -- cycle ;
\draw  [draw opacity=0][pattern=_84xy0b260,pattern size=5.699999999999999pt,pattern thickness=0.75pt,pattern radius=0pt, pattern color={rgb, 255:red, 208; green, 2; blue, 27}] (427,70) -- (469,65.5) -- (499,90.5) -- (491.5,97.5) -- (417,104) -- (346,104.5) -- (373.5,71) -- cycle ;
\draw  [draw opacity=0][pattern=_mfnqd9z6g,pattern size=5.699999999999999pt,pattern thickness=0.75pt,pattern radius=0pt, pattern color={rgb, 255:red, 208; green, 2; blue, 27}] (300.5,161.5) -- (381,166.5) -- (353,188.5) -- (336.5,200) -- (275.5,196.5) -- (228,193) -- (229.5,156.5) -- cycle ;
\draw [fill={rgb, 255:red, 255; green, 255; blue, 255 }  ,fill opacity=1 ]   (97,86.5) .. controls (125.5,92) and (137.5,75.5) .. (147.5,30.5) ;
\draw [fill={rgb, 255:red, 255; green, 255; blue, 255 }  ,fill opacity=1 ]   (221.5,31.5) .. controls (213,73) and (244.5,151.5) .. (282.5,92.5) .. controls (320.5,33.5) and (311.5,159) .. (353,101) .. controls (394.5,43) and (397.5,65.5) .. (405.5,26) ;
\draw [fill={rgb, 255:red, 255; green, 255; blue, 255 }  ,fill opacity=1 ]   (453,28.5) .. controls (457,64) and (483.5,86) .. (499,90.5) ;
\draw [fill={rgb, 255:red, 255; green, 255; blue, 255 }  ,fill opacity=1 ]   (228.27,228.24) .. controls (238.63,187.16) and (250,125) .. (194,167.5) .. controls (138,210) and (140,87.5) .. (96.5,158) ;
\draw [fill={rgb, 255:red, 255; green, 255; blue, 255 }  ,fill opacity=1 ]   (442,189) .. controls (424.5,190.5) and (426.5,175) .. (416.5,162) .. controls (406.5,149) and (355.5,179.5) .. (340.95,188.93) .. controls (326.4,198.36) and (325.51,209.25) .. (319.5,231.5) ;
\draw [color={rgb, 255:red, 190; green, 0; blue, 25 }  ,draw opacity=0.69 ]   (297.5,42.5) -- (215.5,83) ;
\draw [color={rgb, 255:red, 190; green, 0; blue, 25 }  ,draw opacity=0.69 ]   (326,44.5) -- (376,89) ;
\draw [color={rgb, 255:red, 190; green, 0; blue, 25 }  ,draw opacity=0.69 ]   (310.5,44.5) -- (289,174) ;

\draw (428,118.9) node [anchor=north west][inner sep=0.75pt]    {$\textcolor[rgb]{0.29,0.56,0.89}{H}$};
\draw (349.95,186.83) node [anchor=north west][inner sep=0.75pt]  [font=\footnotesize,color={rgb, 255:red, 139; green, 87; blue, 42 }  ,opacity=1 ]  {$C_{K}( H)$};
\draw (151,154.4) node [anchor=north west][inner sep=0.75pt]  [font=\footnotesize,color={rgb, 255:red, 65; green, 117; blue, 5 }  ,opacity=1 ]  {$N_{r,R}( H)$};
\draw (504,158.9) node [anchor=north west][inner sep=0.75pt]    {$\Gamma $};
\draw (289.5,28.4) node [anchor=north west][inner sep=0.75pt]  [font=\footnotesize,color={rgb, 255:red, 208; green, 2; blue, 27 }  ,opacity=1 ]  {$A_{r,R,K}( H)$};

\end{tikzpicture}

    \caption{The region $A_{r,R,K}(H)$ surrounding $H$. In this cartoon $H$ is depicted as a line for illustrative purposes.}
    \label{fig:arrk-fig}
\end{figure}

Next, we introduce ``shadows''. Given a component $U$ of $A_{r,\infty,K}(H)$, define its \textit{shadow} $\shad U$ as the set of points $p \in \partial G$ such that for any ray $\gamma \in p$ based at $1$, we have that $\gamma(t)$ is in $U$ for all sufficiently large $t$. We also extend the definition of a shadow to a union of components $V = \bigcup_i U_i$ by defining 
$
\shad V = \bigcup_i \shad U_i
$.

We now prove five lemmas, generalised from \cite{barrett2018computing}. Originally, these results related to bi-infinite geodesics, but careful inspection of their proofs reveals that we only really need the fact that a geodesic path in a hyperbolic space is a quasiconvex set. This allows us to generalise some of the machinery in this paper from geodesics to quasiconvex subgroups. 
Apart from some small changes, the following proofs are practically reproduced verbatim from \cite{barrett2018computing}. 

\begin{lemma}[{\cite[Lem.~1.17]{barrett2018computing}}]\label{lem:shadow-covers}
For $r > \eta$, $K \geq r$, we have that 
$$
\partial G - \Lambda H = \bigcup_U \mathcal S U, 
$$
where $U$ ranges over the connected components of $A_{r, \infty, K}(H)$. Moreover, $\shad U \cap \shad V = \emptyset$ for distinct components $U$, $V$. 
\end{lemma}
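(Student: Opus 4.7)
The plan is to verify three things: first, that each shadow $\shad U$ lies in $\partial G - \Lambda H$; second, that every $p \in \partial G - \Lambda H$ lies in $\shad U$ for some component $U$ of $A_{r,\infty,K}(H)$; and third, that the shadows of distinct components are disjoint. The first and third will follow quickly from existing machinery, and the substance of the lemma is the second assertion.

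For the first inclusion I would argue by contradiction: if $p \in \shad U \cap \Lambda H$, then any ray $\gamma \in p$ based at $1$ eventually enters $U$, so $d(\gamma(t), H) \geq r > \eta$ for large $t$, but Lemma~\ref{lem:eta} forces $\gamma$ to remain in the closed $\eta$-neighbourhood of $H$ because $\gamma(\infty) \in \Lambda H$ — a contradiction. For the disjointness statement, if $p \in \shad U \cap \shad V$ then any ray in $p$ is eventually in both $U$ and $V$, which is impossible unless $U = V$, since distinct components of $N_{r,\infty}(H)$ are disjoint in $\Gamma$.

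For the main inclusion, fix $p \in \partial G - \Lambda H$ and a ray $\gamma \in p$ based at $1$. Lemma~\ref{lem:liminf} gives $d(\gamma(t), H) \to \infty$, so for some $T_0$ the tail $\gamma([T_0, \infty))$ sits inside $N_{r,\infty}(H)$ and, being connected, lies in a single component $U$. To see that $U$ meets $C_K(H)$, pick $t$ large with $d(x, H) \geq K$ where $x := \gamma(t)$, let $h \in H$ realise the distance from $x$ to $H$, and walk along the geodesic $[x,h]$. A short triangle-inequality argument shows that $h$ realises the distance to $H$ from every point $y$ of $[x,h]$, so $d(y,H)$ decreases by exactly $1$ at each step. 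Consequently there is $y \in [x,h]$ with $d(y,H) = K$, and the sub-path from $x$ to $y$ stays in $N_{r,\infty}(H)$ because $K \geq r$. Hence $y \in U \cap C_K(H)$, so $U$ is indeed a component of $A_{r,\infty,K}(H)$.

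The step I expect to require the most care is verifying that this $U$ depends only on $p$ and not on the choice of ray $\gamma \in p$ — this is what the definition of $\shad U$ demands. For this I would invoke the standard $\delta$-hyperbolicity fact that any two rays based at $1$ with the same ideal endpoint lie within Hausdorff distance at most some $M_0 = M_0(\delta)$. Once both rays are deep enough that $d(\gamma_i(t), H) > M_0 + r$, a short geodesic joining nearby points on the two rays stays entirely inside $N_{r,\infty}(H)$ by the triangle inequality, so the tails of $\gamma_1$ and $\gamma_2$ lie in the same component of $N_{r,\infty}(H)$. Combining this with the earlier step shows the assignment $p \mapsto U$ is well-defined, finishing the second assertion and hence the lemma.
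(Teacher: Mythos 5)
Your proof is correct and follows essentially the same approach as the paper: disjointness and avoidance of $\Lambda H$ are checked directly, the tail of a ray is placed in a component $U$ via Lemma~\ref{lem:liminf}, and well-definedness of $p \mapsto U$ follows from the bounded Hausdorff distance between asymptotic rays based at the same point. One small improvement over the paper's proof is your explicit verification, via a nearest-point geodesic, that the component of $N_{r,\infty}(H)$ containing the tail actually meets $C_K(H)$ and so really is a component of $A_{r,\infty,K}(H)$; the paper tacitly assumes this.
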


\begin{proof}
It is clear from the definition of $\shad U$ that distinct components have disjoint shadows. 
Since $r > \eta$, we have by Lemma~\ref{lem:eta} that $\shad U$ and $\Lambda H$ are disjoint. 
Now let $p \in \partial G - \Lambda H$. We need to check that $p \in \shad U$ for some component $U$ of $A_{r, \infty, K}(H)$. Let $\gamma \in p$, then there is some $t_0$ such that for $t \geq t_0$ we have that $d(\gamma(t), H) \geq r + D$. 

Let $U$ be the component of $A_{r, \infty, K}(H)$ containing $\gamma (t)$ for $t \geq t_0$. We claim that if $\gamma'$ is another ray in $p$, then, $\gamma'(t')$ lies in $U$ for sufficiently large $t'$. 
We know that the Hausdorff distance between $\gamma$ and $\gamma'$ is at most $D$. In particular, for every $t'$ there is some $t$ such that 
$d(\gamma(t), \gamma'(t')) \leq D$, and by the triangle inequality $t$ and $t'$ satisfy $|t-t'| \leq D$. Then if $t' \geq t_0 + D$, we must have that $t \geq t_0$, and so the segment between $\gamma'(t')$ and $\gamma(t)$ lies in $A_{r, \infty, K}(H)$, and thus $\gamma'(t')\in U$. It follows $p \in \shad U$. 
\end{proof}

\begin{lemma}[{\cite[Lem.~1.18]{barrett2018computing}}]\label{lem:shad-nonempty}
Let $r > \eta$ and $K \geq r + Q + \delta + C$. Then for every component $U$ of $A_{r,\infty,K}(H)$, we have that $\mathcal S U$ is non-empty. 
\end{lemma}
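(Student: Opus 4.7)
The plan is to exhibit an explicit point of $\shad U$ via the visibility property. Fix $x \in U \cap C_K(H)$, which exists by the definition of $A_{r,\infty,K}(H)$, and apply Lemma~\ref{lem:visibility} to choose a geodesic ray $\gamma$ based at $1$ with $d(\gamma(t_0), x) \leq C$ for some $t_0 \geq 0$. A direct triangle-inequality computation on the segment $[\gamma(t_0), x]$ shows that every point $y$ of this segment satisfies $d(y, H) \geq K - C \geq r + Q + \delta \geq r$, so $\gamma(t_0)$ is joined to $x$ inside $N_{r,\infty}(H)$ and therefore lies in the component $U$. The remaining task is to show $\gamma(\infty) \in \shad U$.

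The main obstacle, and the bulk of the work, is to prove $d(\gamma(t), H) \geq r$ for every $t \geq t_0$, so that the connected arc $\gamma|_{[t_0, \infty)}$ remains inside the single component $U$ (rather than wandering back into $N_r(H)$ and re-emerging in a different component). I would argue by contradiction: suppose $d(\gamma(t_1), h) < r$ for some $t_1 > t_0$ and some $h \in H$, and consider the geodesic triangle with vertices $1$, $h$, $\gamma(t_1)$. Since $\gamma(t_0)$ lies on the side $\gamma|_{[0, t_1]}$, $\delta$-slimness forces it within $\delta$ of one of the other two sides. Being close to $[1, h]$ yields $d(\gamma(t_0), H) \leq Q + \delta$ by $Q$-quasiconvexity of $H$, while being close to $[h, \gamma(t_1)]$ yields $d(\gamma(t_0), H) \leq r + \delta$ because that side has length less than $r$. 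Both bounds contradict $d(\gamma(t_0), H) \geq K - C \geq r + Q + \delta$ under the hypothesis on $K$.

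With the main step in hand, $\gamma|_{[t_0, \infty)}$ is a connected subset of $N_{r, \infty}(H)$ containing $\gamma(t_0) \in U$, so $\gamma(t) \in U$ for every $t \geq t_0$; by definition this gives $\gamma(\infty) \in \shad U$. Well-definedness of this conclusion across the equivalence class of $\gamma$ in $\partial G$ proceeds exactly as in the proof of Lemma~\ref{lem:shadow-covers}: any equivalent ray lies within Hausdorff distance $D$ of $\gamma$, so by Lemma~\ref{lem:liminf} it eventually enters $N_{r, \infty}(H)$ and, by the Morse-type fellow-travelling already used in that proof, must do so inside the same component $U$. Hence $\shad U$ is non-empty, as required.
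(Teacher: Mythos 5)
Your proof is correct and follows essentially the same approach as the paper: pick $x \in U \cap C_K(H)$, use the visibility lemma to find a ray $\gamma$ passing within $C$ of $x$, argue by contradiction with a $\delta$-slim geodesic triangle and $Q$-quasiconvexity that $\gamma$ cannot re-enter the $r$-neighbourhood of $H$ past $\gamma(t_0)$, then appeal to the fellow-travelling argument from the proof of Lemma~\ref{lem:shadow-covers} to conclude $\gamma(\infty) \in \shad U$. Your choice of an arbitrary $h \in H$ with $d(\gamma(t_1), h) < r$, as opposed to the paper's nearest-point projection $y$ of $\gamma(t')$ onto $H$, is an inessential variation, and your extra sentence justifying $\gamma(t_0) \in U$ via the segment $[\gamma(t_0), x]$ is a welcome explicitation of a step the paper leaves implicit.
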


\begin{proof}
Let $x \in C_K(H) \cap U$, and using Lemma~\ref{lem:visibility} choose a geodesic ray $\gamma$ based at 1 which passes within $C$ of $x$, say $d(\gamma(t), x) \leq C$. Since $K > r + C$, we have that $\gamma(t) \in U$. In particular, 
$$
d(\gamma(t), H) > r + Q + \delta. 
$$
Suppose now that for some $t' \geq t$, we have that $d(\gamma(t'), H) \leq r$. Let $y \in H$ be a nearest point projection of $\gamma(t')$. Then consider the geodesic triangle $[1,y,\gamma(t')]$, where the segment $[1,\gamma(t')]$ is precisely an initial segment of $\gamma$. Then $\gamma(t)$ is either $\delta$-close to $[y, \gamma(t')]$ or $[1,y]$. In any case, 
$$
d(\gamma(t), H) \leq r +Q + \delta,
$$
which is a contradiction. Thus $\gamma(t') \in U$ for every $t' \geq t$. 
It now follows by an argument similar to that in the proof of Lemma~\ref{lem:shadow-covers} that $\gamma(\infty) \in \shad U$. 
\end{proof}

\begin{lemma}[{\cite[Lem.~1.19]{barrett2018computing}}]
Let $r > \eta$, $K > r$, then for every component $U$ of $A_{r,\infty,K}(H)$, we have that $\mathcal S U$ is open and closed in $\partial G - \Lambda H$.
\end{lemma}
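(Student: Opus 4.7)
The plan is to leverage Lemma~\ref{lem:shadow-covers}, which gives the disjoint decomposition $\partial G - \Lambda H = \bigsqcup_V \shad V$ over the components $V$ of $A_{r,\infty,K}(H)$, to reduce to a single assertion: each $\shad V$ is open in $\partial G - \Lambda H$. Granting this, $\shad U$ is automatically closed, since $(\partial G - \Lambda H) - \shad U = \bigcup_{V \neq U} \shad V$ is an open union. So the entire task reduces to proving openness.

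To show $\shad U$ is open, I would fix $p \in \shad U$, choose a geodesic ray $\gamma \in p$ based at $1$, and combine $p \in \shad U$ with Lemma~\ref{lem:liminf} to pick $T$ large enough that $\gamma(T) \in U$ and $d(\gamma(T), H) > r + Q + 3\delta$. Standard fellow-traveling of geodesic rays from a common basepoint in a $\delta$-hyperbolic space then supplies some $S = S(T,\delta)$ such that every $q \in \partial G$ with $(p\cdot q)_1 \geq S$ admits a geodesic ray $\gamma' \in q$ based at $1$ with $d(\gamma(T), \gamma'(T)) \leq 2\delta$. Let $W = \{q \in \partial G : (p\cdot q)_1 \geq S\}$; shrinking $W$ if necessary using that $\Lambda H$ is closed and $p \notin \Lambda H$, we may assume $W \cap \Lambda H = \emptyset$, so that $W \subseteq \partial G - \Lambda H$.

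For any $q \in W$ and its corresponding ray $\gamma'$, the geodesic segment $[\gamma(T), \gamma'(T)]$ has length at most $2\delta$, so every point on it sits at distance strictly greater than $r$ from $H$; the segment therefore lies in $N_{r,\infty}(H)$, forcing $\gamma'(T)$ into the same component as $\gamma(T)$, which is $U$. Furthermore $d(\gamma'(T), H) > r + Q + \delta$, which positions us to reapply the triangle argument from the proof of Lemma~\ref{lem:shad-nonempty}: if $d(\gamma'(t'), H) \leq r$ for some $t' \geq T$, then with $y \in H$ a nearest-point projection of $\gamma'(t')$, $\delta$-slimness of the triangle $[1, y, \gamma'(t')]$ combined with the $Q$-quasiconvexity of $H$ forces $d(\gamma'(T), H) \leq r + Q + \delta$, a contradiction. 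Hence $\gamma'([T, \infty)) \subseteq N_{r,\infty}(H)$; this is a connected subset meeting $U$ at $\gamma'(T)$, so it lies entirely in $U$, and $q \in \shad U$ as required.

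The main technical care lies in calibrating the constants, so that the $2\delta$ slack from fellow-traveling still leaves enough margin above the threshold $r + Q + \delta$ required by the quasiconvexity argument; this is what dictates the choice of $d(\gamma(T), H) > r + Q + 3\delta$ rather than the bare minimum. Everything else is a packaging of standard hyperbolic estimates together with the three tools already established: Lemma~\ref{lem:liminf}, Lemma~\ref{lem:shadow-covers}, and the triangle bookkeeping inside the proof of Lemma~\ref{lem:shad-nonempty}.
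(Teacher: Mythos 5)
Your argument is correct and follows essentially the same strategy as the paper: reduce to openness using the disjoint decomposition from Lemma~\ref{lem:shadow-covers}, then show a small-enough neighbourhood of $p$ lands in $\shad U$ by combining fellow-travelling of geodesic rays with the $\delta$-slim-triangle-plus-quasiconvexity bookkeeping from Lemma~\ref{lem:shad-nonempty}. The only cosmetic differences are that the paper parametrises its neighbourhoods by the sets $V_t(\gamma)$ rather than a Gromov-product threshold, and that it explicitly verifies the containment for an arbitrary second ray $\beta' \in q$ (hence the larger cushion $r+Q+7\delta+1$), whereas you verify it for one chosen ray $\gamma' \in q$ and then must implicitly appeal again to Lemma~\ref{lem:shadow-covers} (which says $q$ lies in $\shad V$ for exactly one component $V$, forcing $V=U$) to close the argument.
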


\begin{proof}
Let $\gamma \in p \in \shad U$, and for $t \geq 0$ let 
$$
V_t(\gamma) = \{\beta(\infty) : \text{$\beta$ is a ray based at 1},\  d(\gamma(t), \beta(t)) < 2\delta + 1 \}.
$$
As $t$ varies, this forms a basis of neighbourhoods about $p$. 
By Lemma~\ref{lem:liminf} there exists a $t_0$ such that for all $t \geq t_0$, we have that 
$$
d(\gamma(t), H) > r + Q + 7\delta + 1.
$$
We claim that $V_{t_0}(\gamma) \subseteq \shad U$, which implies that $\shad U$ is open. 

Let $\beta \in q \in V_{t_0}(\gamma)$, so by definition we have $d(\beta(t_0), \gamma(t_0)) < 2\delta + 1$. 
Then consider another ray $\beta' \in q$ also, so $d(\beta(t_0), \beta'(t_0)) < 4\delta$. We need to show that $\beta'(t) \in U$ for all $t \geq t_0$. 
Suppose this is not the case. We have that $\beta'(t_0)$ is in $U$, so for this not to hold we must have that $\beta'$ ``leaves'' $U$ at some point. Thus for some $t' \geq t_0$, we have that $d(\beta'(t'), H) \leq r$, say for some $y \in H$ we have $d(\beta'(t'), y) \leq r$. 
\begin{figure}[t]
    \centering

\tikzset{every picture/.style={line width=0.75pt}} 

\begin{tikzpicture}[x=0.75pt,y=0.75pt,yscale=-1,xscale=1]

\draw [fill={rgb, 255:red, 242; green, 242; blue, 242 }  ,fill opacity=1 ]   (182.86,247.03) .. controls (189.24,244.25) and (212.86,204.42) .. (246.44,200.67) .. controls (280.03,196.93) and (316.57,195.96) .. (334.76,204.28) .. controls (352.95,212.61) and (417.1,228.71) .. (437.52,213.17) .. controls (457.95,197.62) and (464.97,219.27) .. (488.58,203.73) .. controls (512.2,188.18) and (456.03,222.6) .. (464.33,233.15) .. controls (472.63,243.7) and (465.61,220.38) .. (482.2,249.25) ;
\draw [color={rgb, 255:red, 103; green, 103; blue, 103 }  ,draw opacity=1 ][line width=0.75]  [dash pattern={on 0.84pt off 2.51pt}]  (229.45,216.77) -- (431.78,143.22) ;
\draw [color={rgb, 255:red, 155; green, 155; blue, 155 }  ,draw opacity=1 ] [dash pattern={on 4.5pt off 4.5pt}]  (280.51,121.62) -- (368.27,110.79) ;
\draw [color={rgb, 255:red, 155; green, 155; blue, 155 }  ,draw opacity=1 ] [dash pattern={on 4.5pt off 4.5pt}]  (280.51,122.12) -- (330.61,179.99) ;
\draw [shift={(330.61,179.99)}, rotate = 49.12] [color={rgb, 255:red, 155; green, 155; blue, 155 }  ,draw opacity=1 ][fill={rgb, 255:red, 155; green, 155; blue, 155 }  ,fill opacity=1 ][line width=0.75]      (0, 0) circle [x radius= 3.35, y radius= 3.35]   ;
\draw [fill={rgb, 255:red, 242; green, 242; blue, 242 }  ,fill opacity=1 ]   (486.03,240.37) .. controls (420.93,236.48) and (429.23,236.48) .. (437.52,213.17) .. controls (445.82,189.85) and (434.65,155.98) .. (431.78,143.22) .. controls (428.91,130.45) and (457.95,117.68) .. (470.08,102.13) .. controls (482.2,86.59) and (510.73,72.45) .. (542,78) ;
\draw    (94.86,71.13) -- (229.45,216.77) ;
\draw [shift={(229.45,216.77)}, rotate = 47.26] [color={rgb, 255:red, 0; green, 0; blue, 0 }  ][fill={rgb, 255:red, 0; green, 0; blue, 0 }  ][line width=0.75]      (0, 0) circle [x radius= 3.35, y radius= 3.35]   ;
\draw [shift={(93.5,69.66)}, rotate = 47.26] [color={rgb, 255:red, 0; green, 0; blue, 0 }  ][line width=0.75]    (10.93,-3.29) .. controls (6.95,-1.4) and (3.31,-0.3) .. (0,0) .. controls (3.31,0.3) and (6.95,1.4) .. (10.93,3.29)   ;
\draw    (200.46,63.57) -- (229.45,216.77) ;
\draw [shift={(229.45,216.77)}, rotate = 79.29] [color={rgb, 255:red, 0; green, 0; blue, 0 }  ][fill={rgb, 255:red, 0; green, 0; blue, 0 }  ][line width=0.75]      (0, 0) circle [x radius= 3.35, y radius= 3.35]   ;
\draw [shift={(200.09,61.61)}, rotate = 79.29] [color={rgb, 255:red, 0; green, 0; blue, 0 }  ][line width=0.75]    (10.93,-3.29) .. controls (6.95,-1.4) and (3.31,-0.3) .. (0,0) .. controls (3.31,0.3) and (6.95,1.4) .. (10.93,3.29)   ;
\draw    (313.38,62.26) -- (229.45,216.77) ;
\draw [shift={(229.45,216.77)}, rotate = 118.51] [color={rgb, 255:red, 0; green, 0; blue, 0 }  ][fill={rgb, 255:red, 0; green, 0; blue, 0 }  ][line width=0.75]      (0, 0) circle [x radius= 3.35, y radius= 3.35]   ;
\draw [shift={(314.34,60.5)}, rotate = 118.51] [color={rgb, 255:red, 0; green, 0; blue, 0 }  ][line width=0.75]    (10.93,-3.29) .. controls (6.95,-1.4) and (3.31,-0.3) .. (0,0) .. controls (3.31,0.3) and (6.95,1.4) .. (10.93,3.29)   ;
\draw [color={rgb, 255:red, 74; green, 144; blue, 226 }  ,draw opacity=1 ]   (147.75,127.39) -- (209.66,114.9) ;
\draw [shift={(209.66,114.9)}, rotate = 348.59] [color={rgb, 255:red, 74; green, 144; blue, 226 }  ,draw opacity=1 ][fill={rgb, 255:red, 74; green, 144; blue, 226 }  ,fill opacity=1 ][line width=0.75]      (0, 0) circle [x radius= 3.35, y radius= 3.35]   ;
\draw [shift={(147.75,127.39)}, rotate = 348.59] [color={rgb, 255:red, 74; green, 144; blue, 226 }  ,draw opacity=1 ][fill={rgb, 255:red, 74; green, 144; blue, 226 }  ,fill opacity=1 ][line width=0.75]      (0, 0) circle [x radius= 3.35, y radius= 3.35]   ;
\draw [color={rgb, 255:red, 74; green, 144; blue, 226 }  ,draw opacity=1 ]   (209.66,114.9) -- (280.51,122.12) ;
\draw [shift={(280.51,122.12)}, rotate = 5.82] [color={rgb, 255:red, 74; green, 144; blue, 226 }  ,draw opacity=1 ][fill={rgb, 255:red, 74; green, 144; blue, 226 }  ,fill opacity=1 ][line width=0.75]      (0, 0) circle [x radius= 3.35, y radius= 3.35]   ;
\draw [shift={(209.66,114.9)}, rotate = 5.82] [color={rgb, 255:red, 74; green, 144; blue, 226 }  ,draw opacity=1 ][fill={rgb, 255:red, 74; green, 144; blue, 226 }  ,fill opacity=1 ][line width=0.75]      (0, 0) circle [x radius= 3.35, y radius= 3.35]   ;
\draw [color={rgb, 255:red, 208; green, 2; blue, 27 }  ,draw opacity=1 ]   (304.77,79.37) -- (431.78,143.22) ;
\draw [shift={(431.78,143.22)}, rotate = 26.69] [color={rgb, 255:red, 208; green, 2; blue, 27 }  ,draw opacity=1 ][fill={rgb, 255:red, 208; green, 2; blue, 27 }  ,fill opacity=1 ][line width=0.75]      (0, 0) circle [x radius= 3.35, y radius= 3.35]   ;
\draw [shift={(304.77,79.37)}, rotate = 26.69] [color={rgb, 255:red, 208; green, 2; blue, 27 }  ,draw opacity=1 ][fill={rgb, 255:red, 208; green, 2; blue, 27 }  ,fill opacity=1 ][line width=0.75]      (0, 0) circle [x radius= 3.35, y radius= 3.35]   ;
\draw  [draw opacity=0][fill={rgb, 255:red, 242; green, 242; blue, 242 }  ,fill opacity=1 ] (434.35,215.39) -- (494.97,215.39) -- (484.1,249.25) -- (423.48,249.25) -- cycle ;
\draw [color={rgb, 255:red, 65; green, 117; blue, 5 }  ,draw opacity=1 ]   (330.61,179.99) -- (334.76,204.28) ;
\draw [shift={(334.76,204.28)}, rotate = 80.31] [color={rgb, 255:red, 65; green, 117; blue, 5 }  ,draw opacity=1 ][fill={rgb, 255:red, 65; green, 117; blue, 5 }  ,fill opacity=1 ][line width=0.75]      (0, 0) circle [x radius= 3.35, y radius= 3.35]   ;
\draw [shift={(330.61,179.99)}, rotate = 80.31] [color={rgb, 255:red, 65; green, 117; blue, 5 }  ,draw opacity=1 ][fill={rgb, 255:red, 65; green, 117; blue, 5 }  ,fill opacity=1 ][line width=0.75]      (0, 0) circle [x radius= 3.35, y radius= 3.35]   ;

\draw (113.24,64.83) node [anchor=north west][inner sep=0.75pt]  [font=\small]  {$\gamma $};
\draw (212.42,61.23) node [anchor=north west][inner sep=0.75pt]  [font=\small]  {$\beta $};
\draw (318.45,64.23) node [anchor=north west][inner sep=0.75pt]  [font=\small]  {$\beta '$};
\draw (2887.2,73.16) node [anchor=north west][inner sep=0.75pt]  [font=\footnotesize,color={rgb, 255:red, 208; green, 2; blue, 27 }  ,opacity=1 ]  {$t'$};
\draw (440.92,137.56) node [anchor=north west][inner sep=0.75pt]  [font=\footnotesize,color={rgb, 255:red, 208; green, 2; blue, 27 }  ,opacity=1 ]  {$y$};
\draw (140.93,135.95) node [anchor=north west][inner sep=0.75pt]  [font=\footnotesize,color={rgb, 255:red, 74; green, 144; blue, 226 }  ,opacity=1 ]  {$t_{0}$};
\draw (216.8,121.52) node [anchor=north west][inner sep=0.75pt]  [font=\footnotesize,color={rgb, 255:red, 74; green, 144; blue, 226 }  ,opacity=1 ]  {$t_{0}$};
\draw (276.34,130.52) node [anchor=north west][inner sep=0.75pt]  [font=\footnotesize,color={rgb, 255:red, 74; green, 144; blue, 226 }  ,opacity=1 ]  {$t_{0}$};
\draw (143,104.53) node [anchor=north west][inner sep=0.75pt]  [font=\footnotesize,color={rgb, 255:red, 74; green, 144; blue, 226 }  ,opacity=1 ]  {$\leq 2\delta +1$};
\draw (227.52,101.75) node [anchor=north west][inner sep=0.75pt]  [font=\footnotesize,color={rgb, 255:red, 74; green, 144; blue, 226 }  ,opacity=1 ]  {$\leq 4\delta $};
\draw (475.8,199.13) node [anchor=north west][inner sep=0.75pt]    {$H$};
\draw (362.85,96.14) node [anchor=north west][inner sep=0.75pt]  [font=\footnotesize,color={rgb, 255:red, 208; green, 2; blue, 27 }  ,opacity=1 ]  {$\leq \ r$};
\draw (221.35,224.11) node [anchor=north west][inner sep=0.75pt]  [font=\footnotesize]  {$1$};
\draw (316.94,120.9) node [anchor=north west][inner sep=0.75pt]  [font=\footnotesize,color={rgb, 255:red, 155; green, 155; blue, 155 }  ,opacity=1 ]  {$\leq \delta ?$};
\draw (281.73,161.9) node [anchor=north west][inner sep=0.75pt]  [font=\footnotesize,color={rgb, 255:red, 155; green, 155; blue, 155 }  ,opacity=1 ]  {$\leq \delta ?$};
\draw (334.7,186.9) node [anchor=north west][inner sep=0.75pt]  [font=\footnotesize,color={rgb, 255:red, 65; green, 117; blue, 5 }  ,opacity=1 ]  {$\leq Q$};

\end{tikzpicture}

    \caption{Bounding the distance between $\gamma(t_0)$ and $H$.}
    \label{fig:bounding-dist-lem3}
\end{figure}
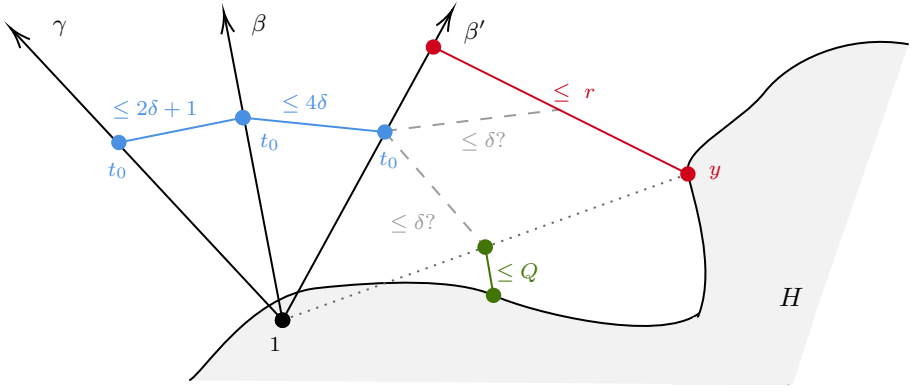
Consider a geodesic path $[1,y]$. Since $H$ is $Q$-quasiconvex this path is contained in the $Q$-neighbourhood of $H$. Combine this with the fact that the triangle $[1, \beta'(t'), y]$ is $\delta$-slim, then inspection of Figure~\ref{fig:bounding-dist-lem3} reveals that 
$$
d(\gamma(t_0),H) \leq r + Q + 7\delta + 1.
$$
This is a contradiction, so $\shad U$ is open. 

To conclude, note that the shadows $\shad U$ form a disjoint open cover of $\partial G - \Lambda H$. It follows that that they must also be closed. 
\end{proof}

For the next lemma, recall that $a_1$, $a_2$ and $k$ denote the parameters of our fixed visual metric on $\partial G$. Also recall that $n$ has been fixed so that $\ddagger_n$ holds in $\Gamma$. 

\begin{lemma}[{\cite[Lem.~1.20]{barrett2018computing}}]\label{lem:projecting-paths}
Suppose that $r$ satisfies
$$
r > 2 \log_a\left( \frac {k_2(n-1)}{k_1(1-a^{-1})}\right) + M + 8\delta + \eta  + C,
$$ 
and let $K > r$. Then for every component $U$ of $A_{r,\infty, K}(H)$, $\mathcal S U$ is contained within exactly one connected component of $\partial G - \Lambda H$. 
Moreover, every component of $\partial G - \Lambda H$ is path connected. 
\end{lemma}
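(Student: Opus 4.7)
The plan is to adapt the Bestvina--Mess argument (that $\ddagger_n$ forces $\partial G$ to be connected) so that the constructed path avoids $\Lambda H$, giving path-connectedness of each shadow $\mathcal{S}U$.

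Given $p, q \in \mathcal{S}U$, fix rays $\gamma_p \in p$, $\gamma_q \in q$ based at $1$; by Lemma~\ref{lem:liminf} choose $t_0$ so that $\gamma_p(t), \gamma_q(t) \in U$ and $d(\gamma_p(t), H), d(\gamma_q(t), H) \geq r + M$ for all $t \geq t_0$. I will produce a continuous path $\pi : [0,1] \to \partial G - \Lambda H$ from $p$ to $q$ as a uniform limit of combinatorial approximations $\pi_k$. Each $\pi_k$ is an edge-path in $\Gamma$ with vertices on the sphere of radius $R_k := t_0 + k$ about $1$, endpoints $\gamma_p(R_k), \gamma_q(R_k)$, consecutive vertices at distance $\leq M$, and the entire path lying in $U$ (so at distance $\geq r$ from $H$). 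The inductive step uses $\ddagger_n$: push each vertex of $\pi_k$ one step outward along a geodesic from $1$, and then connect consecutive pushed vertices by a path of length $\leq n$ lying outside $B_1(R_{k+1})$; each such path remains in $U$ provided $r$ dominates the $O(n)$ drift plus $\eta$.

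Each vertex $v$ of $\pi_k$ yields, via visibility (Lemma~\ref{lem:visibility}), a boundary point $\beta_k(v) \in \partial G$, and adjacent vertices $v, w$ of $\pi_k$ have Gromov product $(\beta_k(v) \cdot \beta_k(w))_1$ bounded below linearly in $R_k$, yielding visual distance $\leq k_2 a^{-\Theta(R_k)}$. Since each edge at level $k$ subdivides into at most $n-1$ edges at level $k+1$, the explicit bound on $r$ in the statement is precisely calibrated so that the geometric factor $a^{-R_k}$ dominates the multiplicative $(n-1)^k$ growth, making $\sum_k (n-1)^k k_2 a^{-\Theta(R_k)}$ summable. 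This produces a uniformly Cauchy sequence of piecewise-constant maps on dyadic partitions whose limit is the desired $\pi$. The limit avoids $\Lambda H$: each $\beta_k(v)$ is realised by a ray passing within $C$ of $v \in U$, where $d(v, H) \geq r > \eta + C$, so Lemma~\ref{lem:eta} excludes $\beta_k(v)$ from $\Lambda H$, and closedness of $\Lambda H$ propagates this to the limit.

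The \textit{moreover} clause is now formal. By Lemma~\ref{lem:shadow-covers} and the preceding result that each shadow is clopen in $\partial G - \Lambda H$, the shadows form a disjoint clopen cover of $\partial G - \Lambda H$; each is path connected by the construction above, hence lies in a single connected component. A connected component of $\partial G - \Lambda H$ is therefore a disjoint union of open shadows, which by connectedness collapses to a single shadow, and is consequently path connected. The main technical obstacle is the constant-bookkeeping in the inductive construction: one must verify that the cumulative drift of the $\ddagger_n$-detours at level $k$ never pushes the approximation within $\eta$ of $H$, while the visual-metric decay at rate $a^{-R_k}$ strictly dominates the $(n-1)^k$ branching factor from subdivision — this is exactly what the explicit threshold on $r$ is engineered to guarantee.
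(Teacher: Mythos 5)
Your overall strategy---project a path through $U$ to the boundary via visibility and control the interpolation with $\ddagger_n$, following Bestvina--Mess---is the same as the paper's. But there are two concrete errors, one of which is a genuine gap in the core of the argument.

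First, the claim that ``closedness of $\Lambda H$ propagates this to the limit'' is false. If $\pi_k(t) \notin \Lambda H$ for every $k$ and $\pi_k \to \pi$ uniformly, it does \emph{not} follow that $\pi(t) \notin \Lambda H$: closed sets contain their limit points, so a uniform limit of paths avoiding a closed set can perfectly well touch it. Knowing only that each approximating boundary point is \emph{not in} $\Lambda H$ (via Lemma~\ref{lem:eta}) gives you no control over the limit. What is actually needed is a \emph{quantitative, uniform} lower bound $\rho(\beta_0(\infty), \Lambda H) \geq k_1 a^{-m_0 - 4\delta + (r-\eta)/2}$, which the paper establishes by a careful Gromov-product estimate: one shows that for $\gamma(\infty) \in \Lambda H$, the product $(\beta_0(\infty) \cdot \gamma(\infty))_1$ is approximated within $2\delta$ by $(\beta_0(m_0) \cdot \gamma(m_0))_1$, which is at most $m_0 - \tfrac{1}{2}(r - \eta)$. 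Your proposal never touches this computation, and it is the heart of the lemma---without it the construction does not close.

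Second, the role you ascribe to the threshold on $r$ is mistaken. You assert that $r$ is calibrated so that $a^{-R_k}$ dominates an exponential $(n-1)^k$ growth and that one must verify summability of $\sum_k (n-1)^k k_2 a^{-\Theta(R_k)}$. In fact there is no such exponential growth in the relevant sum: to compare $\beta_0(\infty)$ with $\beta_t(\infty)$ for $n$-adic $t$, one takes at most $n-1$ steps at each level $k$, each of visual length $\leq k_2 a^{-m_0 - k + M/2}$, giving a plain geometric series $(n-1)k_2 a^{-m_0 + M/2}\sum_k a^{-k}$ that converges regardless of $r$. The threshold on $r$ is there for a different reason entirely: so that the resulting diameter $\frac{k_2(n-1)a^{-m_0 + M/2}}{1-a^{-1}}$ of the interpolating path about $\beta_0(\infty)$ is strictly smaller than the lower bound on $\rho(\beta_0(\infty), \Lambda H)$ from the first point. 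The explicit constant is the result of equating these two quantities, not of a convergence condition. So both the mechanism by which the path avoids $\Lambda H$ and the reason the constant takes the form it does are absent from your proposal.
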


\begin{proof}
Let $p, q \in \shad U$, and let $\alpha_1 \in p$, $\alpha_2 \in q$. Then there is some $t_1$, $t_2$ such that $\alpha_1(t_1), \alpha_2(t_2) \in U$. Let $\phi : [0,\ell] \to U$ be a path connecting  $\alpha_1(t_1)$ to $\alpha_2(t_2)$, parameterised by arc length. 

We follow the methodology of Bestvina--Mess \cite{bestvina1991boundary} and ``project'' this path to the boundary. For every $i \in \Z \cap [0,\ell]$, let $\beta_i$ be a ray from $1$, passing through a point $\beta_i(m_i) = z_i$ such that $d(z_i, \phi(i)) \leq C$. 
We will show that $\beta_i(\infty)$ and $\beta_{i+1}(\infty)$ can be connected by a path in $\partial G$ avoiding $\Lambda H$. This implies the result. Note that each $\beta_i(\infty)$ is not in $\Lambda H$, since $r \geq \eta + C$. 

For notational convenience, assume that $i = 0$. Let $n$ be such that $\ddagger_n$ holds in $\Gamma$. For every $n$-adic $t \in [0,1]$, by induction on the power $k$ of the denominator we define $\beta_t$, satisfying
$$
d(\beta_{j/n^k}(m_i + k),\beta_{j+1/n^k}(m_i + k)) \leq M,
$$
where $M = 6C + 2\delta + 3$, for each $0 \leq j < n^k$. Note that the base case of this induction holds because $M \geq 2C + 1$. Secondly, 
the triangle inequality then gives the following lower bound:
\begin{align*}
    (\beta_{j/n_k}(\infty) \cdot \beta_{j+1/n_k}(\infty))_1 
    &\geq \liminf_{n_1, n_2} (\beta_{j/n_k}(n_1) \cdot \beta_{j+1/n_k}(n_2))_1 \\
    &\geq (\beta_{j/n_k}(m_0 + k) \cdot \beta_{j+1/n_k}(m_0 + k))_1 \\
    &= m_0 + k - M/2.
\end{align*}
We therefore deduce that 
$$
\rho (\beta_{j/n_k}(\infty), \beta_{j+1/n_k}(\infty)) \leq k_2 a^{-m_0 - k + M/2}.
$$
Inductively applying the triangle inequality and summing the geometric series, we thus obtain the bound that for every $n$-adic rational $t\in [0,1]$, we have 
$$
\rho (\beta_{0}(\infty), \beta_{t}(\infty)) \leq \frac {k_2 (n-1)a^{-m_0  + M/2}}{1-a^{-1}}.
$$
We then define a path $\psi : [0,1] \to \partial G$ by $\psi(t) = \beta_t(\infty)$, for every $n$-adic $t$, and extending to $[0,1]$ continuously. 
We have shown that this path is contained within a ball of radius 
\begin{equation}\label{eq:ball-around-beta0}
  \frac{k_2(n-1) a^{-m_0 + \frac M 2}}{1-a^{-1}}  
\end{equation}
around $\beta_0(\infty)$. 

We now seek to place a lower bound on $\rho (\beta_0(\infty), \Lambda H)$ in terms of $m_0$. 
Let $\gamma$ be a ray from 1 with limit in $\Lambda H$. So $\gamma$ is contained in the $\eta$-neighbourhood of $H$. 
By \cite[ch.~III.H,~3.17]{bridson2013metric} we have that 
\begin{equation}\label{eq:bounding-gromov-prod}
(\beta_0(\infty)\cdot \gamma(\infty))_1 \leq \liminf_{n_1,n_2} (\beta_0(n_1) \cdot \gamma(n_2))+2\delta.
\end{equation}
So let $n_1, n_2 \geq m_0$. We will show that 
$(\beta_0(n_1) \cdot \gamma(n_2))_1$ can be approximated by $(\beta_0(m_0) \cdot \gamma(m_0))_1$.
Since $d(\beta_0(m_0), H) \geq r > \delta + \eta$, we have that $d(\beta_0(m_0) , \gamma) > \delta$. So there exists some $a \in [\beta_0(n_1), \gamma(n_2)]$ such that $d(\beta_0(m_0), a) \leq \delta$. 
Similarly, we have that $d(\gamma(m_0), \beta_0) > \delta$, as $r > 2 \delta + \eta$. Thus again there exists a point $b \in [\beta_0(n_1), \gamma(n_2)]$ such that $d(\gamma(m_0), b) \leq \delta$. 

We now check that $a$ and $b$ are ``in order''. Suppose that $b$ is closer to $\beta_0(n_1)$ than $a$. Then by considering the geodesic triangle $[a, \beta_0(m_0), \beta_0(n_1)]$, we see that $d(b, \beta_0) \leq 2\delta$. But this implies that $d(\beta_0(m_0), \gamma(m_0)) \leq 6\delta$, which is a contradiction since $r > 6\delta + \eta$. Therefore the picture we have looks something like Figure~\ref{fig:lem4}.
\begin{figure}[t]
    \centering

\tikzset{every picture/.style={line width=0.75pt}} 

\begin{tikzpicture}[x=0.75pt,y=0.75pt,yscale=-1,xscale=1]

\draw    (100,116.98) .. controls (296.7,116.59) and (409.21,72.28) .. (487.16,31.61) ;
\draw [shift={(488.33,31)}, rotate = 152.37] [color={rgb, 255:red, 0; green, 0; blue, 0 }  ][line width=0.75]    (10.93,-3.29) .. controls (6.95,-1.4) and (3.31,-0.3) .. (0,0) .. controls (3.31,0.3) and (6.95,1.4) .. (10.93,3.29)   ;
\draw [shift={(100,116.98)}, rotate = 359.89] [color={rgb, 255:red, 0; green, 0; blue, 0 }  ][fill={rgb, 255:red, 0; green, 0; blue, 0 }  ][line width=0.75]      (0, 0) circle [x radius= 3.35, y radius= 3.35]   ;
\draw    (100,116.98) .. controls (296.7,116.59) and (396.34,145.36) .. (486.97,171.94) ;
\draw [shift={(488.33,172.34)}, rotate = 196.35] [color={rgb, 255:red, 0; green, 0; blue, 0 }  ][line width=0.75]    (10.93,-3.29) .. controls (6.95,-1.4) and (3.31,-0.3) .. (0,0) .. controls (3.31,0.3) and (6.95,1.4) .. (10.93,3.29)   ;
\draw [color={rgb, 255:red, 208; green, 2; blue, 27 }  ,draw opacity=1 ]   (425,59.86) -- (440,158.21) ;
\draw [shift={(440,158.21)}, rotate = 81.33] [color={rgb, 255:red, 208; green, 2; blue, 27 }  ,draw opacity=1 ][fill={rgb, 255:red, 208; green, 2; blue, 27 }  ,fill opacity=1 ][line width=0.75]      (0, 0) circle [x radius= 3.35, y radius= 3.35]   ;
\draw [shift={(425,59.86)}, rotate = 81.33] [color={rgb, 255:red, 208; green, 2; blue, 27 }  ,draw opacity=1 ][fill={rgb, 255:red, 208; green, 2; blue, 27 }  ,fill opacity=1 ][line width=0.75]      (0, 0) circle [x radius= 3.35, y radius= 3.35]   ;
\draw [color={rgb, 255:red, 144; green, 19; blue, 254 }  ,draw opacity=1 ] [dash pattern={on 0.84pt off 2.51pt}]  (340.5,88.13) -- (430,96.96) ;
\draw [shift={(430,96.96)}, rotate = 5.64] [color={rgb, 255:red, 144; green, 19; blue, 254 }  ,draw opacity=1 ][fill={rgb, 255:red, 144; green, 19; blue, 254 }  ,fill opacity=1 ][line width=0.75]      (0, 0) circle [x radius= 2.34, y radius= 2.34]   ;
\draw [shift={(340.5,88.13)}, rotate = 5.64] [color={rgb, 255:red, 144; green, 19; blue, 254 }  ,draw opacity=1 ][fill={rgb, 255:red, 144; green, 19; blue, 254 }  ,fill opacity=1 ][line width=0.75]      (0, 0) circle [x radius= 2.34, y radius= 2.34]   ;
\draw [color={rgb, 255:red, 144; green, 19; blue, 254 }  ,draw opacity=1 ] [dash pattern={on 0.84pt off 2.51pt}]  (341,135.83) -- (435.5,129) ;
\draw [shift={(435.5,129)}, rotate = 355.87] [color={rgb, 255:red, 144; green, 19; blue, 254 }  ,draw opacity=1 ][fill={rgb, 255:red, 144; green, 19; blue, 254 }  ,fill opacity=1 ][line width=0.75]      (0, 0) circle [x radius= 2.34, y radius= 2.34]   ;
\draw [shift={(341,135.83)}, rotate = 355.87] [color={rgb, 255:red, 144; green, 19; blue, 254 }  ,draw opacity=1 ][fill={rgb, 255:red, 144; green, 19; blue, 254 }  ,fill opacity=1 ][line width=0.75]      (0, 0) circle [x radius= 2.34, y radius= 2.34]   ;

\draw (490.5,142.28) node [anchor=north west][inner sep=0.75pt]    {$\gamma $};
\draw (485.5,46.46) node [anchor=north west][inner sep=0.75pt]    {$\beta _{0}$};
\draw (377.5,40.8) node [anchor=north west][inner sep=0.75pt]  [font=\footnotesize,color={rgb, 255:red, 208; green, 2; blue, 27 }  ,opacity=1 ]  {$\beta _{0}( n_{1})$};
\draw (402,163.89) node [anchor=north west][inner sep=0.75pt]  [font=\footnotesize,color={rgb, 255:red, 208; green, 2; blue, 27 }  ,opacity=1 ]  {$\gamma ( n_{2})$};
\draw (84,111.3) node [anchor=north west][inner sep=0.75pt]  [font=\footnotesize]  {$1$};
\draw (385,78.05) node [anchor=north west][inner sep=0.75pt]  [font=\footnotesize,color={rgb, 255:red, 144; green, 19; blue, 254 }  ,opacity=1 ]  {$\leq \delta $};
\draw (376.5,115.05) node [anchor=north west][inner sep=0.75pt]  [font=\footnotesize,color={rgb, 255:red, 144; green, 19; blue, 254 }  ,opacity=1 ]  {$\leq \delta $};
\draw (435.5,86.4) node [anchor=north west][inner sep=0.75pt]  [font=\small,color={rgb, 255:red, 144; green, 19; blue, 254 }  ,opacity=1 ]  {$a$};
\draw (441,115.9) node [anchor=north west][inner sep=0.75pt]  [font=\small,color={rgb, 255:red, 144; green, 19; blue, 254 }  ,opacity=1 ]  {$b$};
\draw (302,69.8) node [anchor=north west][inner sep=0.75pt]  [font=\footnotesize,color={rgb, 255:red, 144; green, 19; blue, 254 }  ,opacity=1 ]  {$\beta _{0}( m_{0})$};
\draw (310,139.89) node [anchor=north west][inner sep=0.75pt]  [font=\footnotesize,color={rgb, 255:red, 144; green, 19; blue, 254 }  ,opacity=1 ]  {$\gamma ( m_{0})$};

\end{tikzpicture}

    \caption{The point $a$ is closer to $\beta_0(n_1)$ than $b$.}
    \label{fig:lem4}
\end{figure}
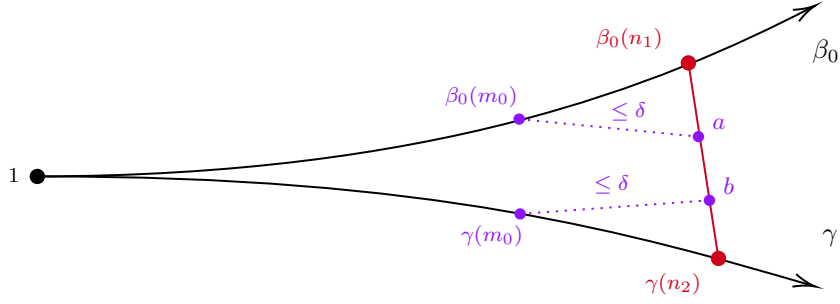

We can thus conclude that 
$$
d(\beta_0(n_1), \gamma(n_2)) = d(\beta_0(n_1), a) + d(a,b) + d(b, \gamma'(n_2)).
$$
We can then compute the following inequality.
\begin{align*}
    (\beta_0(n_1) \cdot \gamma(n_2))_1 - (\beta_0(m_0)) \cdot \gamma(m_0))_1 &= \frac 1 2 [n_1 + n_2 - d(\beta_0(n_1), \gamma(n_2))] \\ 
     &\hspace{1em }- \frac 1 2 [2m_0 - d(\beta_0(m_0), \gamma(m_0))] \\
     &= \frac 1 2 [(n_1 - m_0) + (n_2 - m_0) \\
     & \hspace{1em} + d(\beta_0(m_0), \gamma(m_0)) - d(\beta_0(n_1), \gamma(n_2))] \\
     &= \frac 1 2 [ d(\beta_0(n_1), \beta_0(m_0)) - d(\beta_0(n_1), a) \\
     &\hspace{1.1em} + d(\gamma(n_2), \gamma(m_0)) - d(\gamma(n_2), b) \\
     &\hspace{1.1em} + d(\beta_0(m_0), \gamma(m_0)) - d(a,b)] \\
     &\leq \frac 1 2 [\delta + \delta + 2\delta] = 2\delta. 
\end{align*}
Applying the bound (\ref{eq:bounding-gromov-prod}), it follows that 
\begin{align*}
    (\beta_0(\infty)\cdot \gamma(\infty))_1 &\leq (\beta_0(m_0)\cdot \gamma(m_0))_1 + 4\delta \\
    &= \frac 1 2 [2m_0 - d(\beta_0(m_0), \gamma(m_0))] + 4\delta \\
    &\leq m_0 - \frac 1 2 (r-\eta) + 4\delta.
\end{align*}
This then implies the following lower bound on the distance between $\gamma(\infty)$ and $\beta_0(\infty)$: 
$$
\rho (\beta_0(\infty), \gamma(\infty)) \geq k_1a^{-m_0 - 4\delta + \frac{r-\eta}{2}}.
$$
We combine this inequality with (\ref{eq:ball-around-beta0}), and a simple calculation reveals that our choice of $r$ ensures that the path constructed between $p$ and $q$ avoids $\Lambda H$. The result follows. 
\end{proof}

\begin{lemma}[{\cite[Lem.~1.21]{barrett2018computing}}]
If $R > 4\delta + Q + \max\{r + 4\delta + 1, K\}$, then the inclusion 
$$
A_{r,R,K}(H) \into A_{r,\infty,K}(H) 
$$
induces a bijection between connected components. 
\end{lemma}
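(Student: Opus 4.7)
I would show that the inclusion induces both a surjection and an injection on the sets of connected components. Surjectivity is immediate: any component $V$ of $A_{r,\infty,K}(H)$ contains a point $x \in C_K(H)$, and the hypothesis $R > K$ places $x$ in $N_{r,R}(H)$. The component of $x$ in $N_{r,R}(H)$ still meets $C_K(H)$, so it is a component of $A_{r,R,K}(H)$ whose image under the inclusion sits inside $V$. For injectivity, I would suppose toward a contradiction that components $U_1 \neq U_2$ of $A_{r,R,K}(H)$ lie in a single component of $A_{r,\infty,K}(H)$, fix an edge path $\alpha = v_0 v_1 \cdots v_n$ in $N_{r,\infty}(H)$ joining $u_1 \in U_1$ to $u_2 \in U_2$, and modify $\alpha$ into an edge path lying entirely in $N_{r,R}(H)$, contradicting $U_1 \neq U_2$.

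Fix an intermediate level $R'$ with $r < R' < R$ such that both $R - R'$ and $R' - r$ exceed a constant $L = L(\delta,Q)$ determined below; the hypothesis on $R$ provides the necessary slack. For each $v_i$ with $d(v_i, H) > R'$, choose a nearest point $v_i^* \in H$ and set $w_i$ to be the point on a geodesic $[v_i, v_i^*]$ at distance $R'$ from $v_i^*$ (so $d(w_i, H) = R'$); otherwise set $w_i = v_i$. Concatenating geodesic segments $[w_i, w_{i+1}]$ in $\Gamma$ then produces a new edge path $\alpha'$ from $u_1$ to $u_2$. Once we establish a uniform bound $d(w_i, w_{i+1}) \leq L$, every point of each segment lies within the band $\{R' - L \leq d(\cdot, H) \leq R' + L\} \subseteq N_{r,R}(H)$, and we are done.

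The crux of the proof is the uniform bound $d(w_i, w_{i+1}) \leq L$. For consecutive vertices $v_i, v_{i+1}$ both at height exceeding $R'$, I would consider the $2\delta$-slim geodesic quadrilateral with vertices $v_i, v_{i+1}, v_{i+1}^*, v_i^*$: the point $w_i$, lying on the side $[v_i, v_i^*]$, must be within $2\delta$ of one of the other three sides. The side $[v_i^*, v_{i+1}^*]$ lies in $N_Q(H)$ by $Q$-quasiconvexity, and the hypothesis $R > 4\delta + Q + K$ forces $R' > 2\delta + Q$, so this case is impossible; a routine computation handles the other two cases and yields $L$ depending only on $\delta$ and $Q$. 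Transitions in which exactly one of $v_i, v_{i+1}$ exceeds $R'$ are easier, since there $d(v_j, w_j) \leq 1$ by the triangle inequality. The main obstacle is tracking the constants in the slim-quadrilateral argument carefully enough to confirm that the quantity $L$ really fits inside the slack provided by the two parts $R > K + 4\delta + Q$ and $R > r + 4\delta + 1 + 4\delta + Q$ of the hypothesis; this is precisely the trade-off between choosing $R'$ far enough below $R$ (to absorb fluctuations upward) and far enough above $r$ (to absorb fluctuations downward) on the modified path.
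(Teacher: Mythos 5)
Your proof is correct and takes a genuinely different route from the paper's. The paper's argument works with a single \emph{minimal} path in $N_{r,\infty}(H)$: minimality forces the subarc above a threshold to be an $(8\delta+2)$-local geodesic, so local geodesic stability (\cite[ch.~III.H,~1.13]{bridson2013metric}) places it within $2\delta$ of a genuine geodesic joining the re-entry points, and a single thin-quadrilateral argument then bounds the maximum height of the path in one stroke. Your argument instead starts from an arbitrary edge path and rebuilds it by projecting each vertex above level $R'$ down to a fixed height along a geodesic to $H$, then bounding each step of the new path via a thin quadrilateral built from adjacent vertices and their nearest-point projections. The quadrilateral step works out more cleanly than you might expect: since $w_i$ sits on the geodesic from $v_i$ to its closest point $v_i^*\in H$, the closest point of $H$ to $w_i$ \emph{is} $v_i^*$, so the constant you call $L$ is $O(\delta)$ and independent of $Q$ (only the exclusion of the bottom side of the quadrilateral, $[v_i^*, v_{i+1}^*] \subset N_Q(H)$, uses $Q$, via $R' > 2\delta + Q$). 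One should also observe that at a transition vertex one has $d(v_i,H) \in (R'-1, R']$ by the triangle inequality, so the unmodified part of $\alpha'$ joins onto the height-$R'$ part at a height close to $R'$ and never drops near $r$ except where $\alpha$ itself does. With these points in place, the constraints $R' > 2\delta + Q$, $R' \geq r + 2\delta + 1$, and $R' + 2\delta + 1 \leq R$ are all compatible under the stated hypothesis on $R$, so your argument closes. Your approach is arguably more elementary (no local-to-global geodesic machinery) at the cost of some bookkeeping; the paper's is shorter because minimality does the bookkeeping for it.
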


\begin{proof}
Surjectivity is obvious since $R \geq K$, so we need only show injectivity. Let $x, y \in C_K(H)$ lie in the same component of $A_{r,\infty,K}(H)$. We claim that the shortest path between $x$ and $y$ in $N_{r, \infty}(H)$ actually lies inside $N_{r,R}(H)$, which implies the result. 

Let $\phi : [0,\ell] \to \Gamma$ be the shortest such path, parameterised by length. Then suppose that for some $s \in [0,\ell]$, we have that $d(\phi(s), H) > R$. 
Let $[t_0, t_1] \subset [0,\ell]$ be the maximal subinterval containing $s$, such that for all $t \in [t_0, t_1]$, we have 
$$
d(\phi(t), H) \geq r+4\delta + 1.
$$
We claim that $\phi|_{[t_0, t_1]}$ is an $(8\delta +2)$-local geodesic.
Indeed, for any $t \in [t_0, t_1]$, we have that the segment 
$$
\phi|_{[t-4\delta -1, t+4\delta+1]\cap [t_0, t_1]}
$$
is contained in $N_{r+4\delta +1, \infty}(H)$.
Therefore, any segment from $\phi(\max\{t_0, t-4\delta-1\})$ to $\phi(\min\{t_1, t+4\delta+1\})$ lies in $N_{r,\infty}(H)$. 
By the minimality of $\phi$, it follows that $\phi|_{[t_0, t_1]}$ is an $(8\delta +2)$-local geodesic.
Apply \cite[ch.~III.H,~1.13]{bridson2013metric}, we then have that any geodesic path between $\phi(t_0)$ and $\phi(t_1)$ is contained in the $2\delta$-neighbourhood of $\phi|_{[t_0, t_1]}$.
Maximality of $[t_0, t_1]$ means that either $t_0 = 0$, or
$
d(\phi(t_0), H) = r+4\delta + 1.
$
In both cases, it follows that $d(\phi(t_0), H) \leq \max \{r+4\delta + 1, K\}$. Identical reasoning also shows that $d(\phi(t_1), H) \leq \max \{r+4\delta + 1, K\}$.

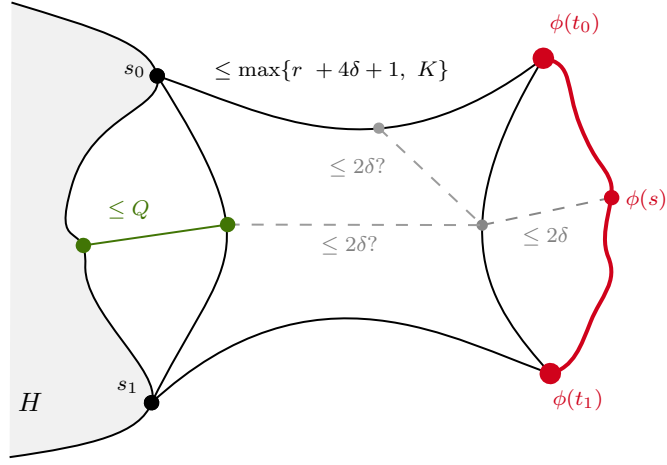
\begin{figure}[h]
    \centering

\tikzset{every picture/.style={line width=0.75pt}} 

\begin{tikzpicture}[x=0.75pt,y=0.75pt,yscale=-1,xscale=1]

\draw    (360.1,83.35) .. controls (307,171.5) and (333,207.5) .. (363.57,241.94) ;
\draw [shift={(363.57,241.94)}, rotate = 48.41] [color={rgb, 255:red, 0; green, 0; blue, 0 }  ][fill={rgb, 255:red, 0; green, 0; blue, 0 }  ][line width=0.75]      (0, 0) circle [x radius= 3.35, y radius= 3.35]   ;
\draw [shift={(360.1,83.35)}, rotate = 121.07] [color={rgb, 255:red, 0; green, 0; blue, 0 }  ][fill={rgb, 255:red, 0; green, 0; blue, 0 }  ][line width=0.75]      (0, 0) circle [x radius= 3.35, y radius= 3.35]   ;
\draw [color={rgb, 255:red, 135; green, 135; blue, 135 }  ,draw opacity=1 ] [dash pattern={on 4.5pt off 4.5pt}]  (329.57,167.27) -- (394.2,153.45) ;
\draw [shift={(394.2,153.45)}, rotate = 347.93] [color={rgb, 255:red, 135; green, 135; blue, 135 }  ,draw opacity=1 ][fill={rgb, 255:red, 135; green, 135; blue, 135 }  ,fill opacity=1 ][line width=0.75]      (0, 0) circle [x radius= 2.34, y radius= 2.34]   ;
\draw [shift={(329.57,167.27)}, rotate = 347.93] [color={rgb, 255:red, 135; green, 135; blue, 135 }  ,draw opacity=1 ][fill={rgb, 255:red, 135; green, 135; blue, 135 }  ,fill opacity=1 ][line width=0.75]      (0, 0) circle [x radius= 2.34, y radius= 2.34]   ;
\draw [fill={rgb, 255:red, 241; green, 241; blue, 241 }  ,fill opacity=1 ]   (94.61,55.31) .. controls (137.13,68.42) and (163.59,75.56) .. (167.37,92.24) .. controls (171.15,108.91) and (135.24,111.29) .. (124.85,147.03) .. controls (114.45,182.76) and (132.41,163.7) .. (131.46,187.52) .. controls (130.52,211.34) and (170.2,241.12) .. (164.53,256.61) .. controls (158.86,272.09) and (135.24,278.04) .. (93.67,284) ;
\draw    (167.37,92.24) .. controls (225.5,184.5) and (201,182.5) .. (164.53,256.61) ;
\draw [shift={(164.53,256.61)}, rotate = 116.2] [color={rgb, 255:red, 0; green, 0; blue, 0 }  ][fill={rgb, 255:red, 0; green, 0; blue, 0 }  ][line width=0.75]      (0, 0) circle [x radius= 3.35, y radius= 3.35]   ;
\draw [shift={(167.37,92.24)}, rotate = 57.79] [color={rgb, 255:red, 0; green, 0; blue, 0 }  ][fill={rgb, 255:red, 0; green, 0; blue, 0 }  ][line width=0.75]      (0, 0) circle [x radius= 3.35, y radius= 3.35]   ;
\draw    (360.1,83.35) .. controls (278.61,146.44) and (221.97,110.51) .. (167.37,92.24) ;
\draw [shift={(167.37,92.24)}, rotate = 198.51] [color={rgb, 255:red, 0; green, 0; blue, 0 }  ][fill={rgb, 255:red, 0; green, 0; blue, 0 }  ][line width=0.75]      (0, 0) circle [x radius= 3.35, y radius= 3.35]   ;
\draw [shift={(360.1,83.35)}, rotate = 142.26] [color={rgb, 255:red, 0; green, 0; blue, 0 }  ][fill={rgb, 255:red, 0; green, 0; blue, 0 }  ][line width=0.75]      (0, 0) circle [x radius= 3.35, y radius= 3.35]   ;
\draw    (363.57,241.94) .. controls (316.12,225.65) and (245.67,181.49) .. (164.53,256.61) ;
\draw [shift={(164.53,256.61)}, rotate = 137.2] [color={rgb, 255:red, 0; green, 0; blue, 0 }  ][fill={rgb, 255:red, 0; green, 0; blue, 0 }  ][line width=0.75]      (0, 0) circle [x radius= 3.35, y radius= 3.35]   ;
\draw [shift={(363.57,241.94)}, rotate = 198.95] [color={rgb, 255:red, 0; green, 0; blue, 0 }  ][fill={rgb, 255:red, 0; green, 0; blue, 0 }  ][line width=0.75]      (0, 0) circle [x radius= 3.35, y radius= 3.35]   ;
\draw [color={rgb, 255:red, 208; green, 2; blue, 27 }  ,draw opacity=1 ][line width=1.5]    (360.1,83.35) .. controls (375.71,85.98) and (370.51,97.37) .. (378.6,117.52) .. controls (386.69,137.68) and (390.74,136.8) .. (393.63,145.56) .. controls (396.51,154.32) and (387.27,170.53) .. (392.47,183.24) .. controls (397.67,195.94) and (388.71,203.34) .. (383.8,215.66) .. controls (378.89,227.98) and (375.42,238.88) .. (363.57,241.94) ;
\draw [shift={(363.57,241.94)}, rotate = 165.49] [color={rgb, 255:red, 208; green, 2; blue, 27 }  ,draw opacity=1 ][fill={rgb, 255:red, 208; green, 2; blue, 27 }  ,fill opacity=1 ][line width=1.5]      (0, 0) circle [x radius= 4.36, y radius= 4.36]   ;
\draw [shift={(360.1,83.35)}, rotate = 9.56] [color={rgb, 255:red, 208; green, 2; blue, 27 }  ,draw opacity=1 ][fill={rgb, 255:red, 208; green, 2; blue, 27 }  ,fill opacity=1 ][line width=1.5]      (0, 0) circle [x radius= 4.36, y radius= 4.36]   ;
\draw [color={rgb, 255:red, 208; green, 2; blue, 27 }  ,draw opacity=1 ]   (394.2,153.45) ;
\draw [shift={(394.2,153.45)}, rotate = 0] [color={rgb, 255:red, 208; green, 2; blue, 27 }  ,draw opacity=1 ][fill={rgb, 255:red, 208; green, 2; blue, 27 }  ,fill opacity=1 ][line width=0.75]      (0, 0) circle [x radius= 3.35, y radius= 3.35]   ;
\draw [color={rgb, 255:red, 155; green, 155; blue, 155 }  ,draw opacity=1 ] [dash pattern={on 4.5pt off 4.5pt}]  (202.5,167) -- (329.57,167.27) ;
\draw [shift={(202.5,167)}, rotate = 0.12] [color={rgb, 255:red, 155; green, 155; blue, 155 }  ,draw opacity=1 ][fill={rgb, 255:red, 155; green, 155; blue, 155 }  ,fill opacity=1 ][line width=0.75]      (0, 0) circle [x radius= 2.34, y radius= 2.34]   ;
\draw [color={rgb, 255:red, 155; green, 155; blue, 155 }  ,draw opacity=1 ] [dash pattern={on 4.5pt off 4.5pt}]  (278,118.5) -- (329.57,167.27) ;
\draw [shift={(278,118.5)}, rotate = 43.4] [color={rgb, 255:red, 155; green, 155; blue, 155 }  ,draw opacity=1 ][fill={rgb, 255:red, 155; green, 155; blue, 155 }  ,fill opacity=1 ][line width=0.75]      (0, 0) circle [x radius= 2.34, y radius= 2.34]   ;
\draw [color={rgb, 255:red, 65; green, 117; blue, 5 }  ,draw opacity=1 ]   (130.5,177.5) -- (202.5,167) ;
\draw [shift={(202.5,167)}, rotate = 351.7] [color={rgb, 255:red, 65; green, 117; blue, 5 }  ,draw opacity=1 ][fill={rgb, 255:red, 65; green, 117; blue, 5 }  ,fill opacity=1 ][line width=0.75]      (0, 0) circle [x radius= 3.35, y radius= 3.35]   ;
\draw [shift={(130.5,177.5)}, rotate = 351.7] [color={rgb, 255:red, 65; green, 117; blue, 5 }  ,draw opacity=1 ][fill={rgb, 255:red, 65; green, 117; blue, 5 }  ,fill opacity=1 ][line width=0.75]      (0, 0) circle [x radius= 3.35, y radius= 3.35]   ;

\draw (96.49,250.4) node [anchor=north west][inner sep=0.75pt]    {$H$};
\draw (361.86,59.07) node [anchor=north west][inner sep=0.75pt]  [font=\footnotesize,color={rgb, 255:red, 208; green, 2; blue, 27 }  ,opacity=1 ]  {$\phi ( t_{0})$};
\draw (363.31,248.33) node [anchor=north west][inner sep=0.75pt]  [font=\footnotesize,color={rgb, 255:red, 208; green, 2; blue, 27 }  ,opacity=1 ]  {$\phi ( t_{1})$};
\draw (399.73,147.75) node [anchor=north west][inner sep=0.75pt]  [font=\footnotesize,color={rgb, 255:red, 208; green, 2; blue, 27 }  ,opacity=1 ]  {$\phi ( s)$};
\draw (194.36,82.73) node [anchor=north west][inner sep=0.75pt]  [font=\footnotesize]  {$\leq \max\{r\ +4\delta +1,\ K\}$};
\draw (149.22,85.36) node [anchor=north west][inner sep=0.75pt]  [font=\footnotesize]  {$s_{0}$};
\draw (145.75,243.95) node [anchor=north west][inner sep=0.75pt]  [font=\footnotesize]  {$s_{1}$};
\draw (348.14,166.64) node [anchor=north west][inner sep=0.75pt]  [font=\footnotesize,color={rgb, 255:red, 128; green, 128; blue, 128 }  ,opacity=1 ]  {$\leq 2\delta $};
\draw (247.64,171.64) node [anchor=north west][inner sep=0.75pt]  [font=\footnotesize,color={rgb, 255:red, 128; green, 128; blue, 128 }  ,opacity=1 ]  {$\leq 2\delta ?$};
\draw (251.64,132.14) node [anchor=north west][inner sep=0.75pt]  [font=\footnotesize,color={rgb, 255:red, 128; green, 128; blue, 128 }  ,opacity=1 ]  {$\leq 2\delta ?$};
\draw (141.14,153.64) node [anchor=north west][inner sep=0.75pt]  [font=\footnotesize,color={rgb, 255:red, 65; green, 117; blue, 5 }  ,opacity=1 ]  {$\leq Q$};

\end{tikzpicture}

    \caption{The point $\phi(s)$ cannot be too far from $H$.}
    \label{fig:lem-121-square}
\end{figure}

Let $s_0$, $s_1$ be closest-point projections of $\phi(t_0)$, $\phi(t_1)$ respectively. Now, consider the quadrilateral $[s_0, s_1, \phi(t_1), \phi(t_0)]$. 
Any quadrilateral in $\Gamma$ is $2\delta$-thin, so by inspecting Figure~\ref{fig:lem-121-square} we see that in any case, there is a path from $\phi(s)$ to $H$ of length at most $4\delta + Q + \max \{r+4\delta + 1 , K\} = R$. This contradicts our choice of $s$, and so the result follows. 
\end{proof}

In summary, the above lemmas give us the following. 

\begin{proposition}\label{prop:arrk-comp-corr}
Let $G$ be a one-ended hyperbolic group with $\delta$-hyperbolic Cayley graph $\Gamma$, and let $H$ be a $Q$-quasiconvex subgroup. Then there exist computable values $r \leq K \leq R$, such that the map
$$
\pi_0(A_{r,R,K}(H)) \rightarrow \pi_0(A_{r,\infty,K}(H)) \xrightarrow{\shad} \pi_0(\partial G - \Lambda H)
$$
is a well-defined bijection. 
\end{proposition}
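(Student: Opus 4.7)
My plan is to assemble the preceding five lemmas into the claimed bijection by a coordinated, computable choice of the parameters $r$, $K$, $R$, and then to verify bijectivity of each map in the composition separately.

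First I would fix parameters. The constants $\delta$, $Q$, $C$, $D$, $\eta$, $M$, $a$, $k_1$, $k_2$ and $n$ are all computable from the input data: $\delta$ via \cite{papasoglu1996algorithm}, $Q$ via Lemma~\ref{lem:qi-embed}, $\eta$ via Lemma~\ref{lem:eta}, and $n$ via the algorithm cited after Theorem~\ref{thm:bestvina-mess}, while $C$, $D$, $M$ depend only on $\delta$ and $a$, $k_1$, $k_2$ are fixed at the outset. I then choose $r$ strictly exceeding the explicit lower bound demanded by Lemma~\ref{lem:projecting-paths} (which in particular forces $r > \eta$), set $K := r + Q + \delta + C$ so that the hypothesis of Lemma~\ref{lem:shad-nonempty} holds and $K \geq r$, and finally pick $R > 4\delta + Q + \max\{r + 4\delta + 1, K\}$ in order to meet the hypothesis of the final (``$A_{r,R,K} \hookrightarrow A_{r,\infty,K}$'') lemma.

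With these choices, the second map $[U] \mapsto [\shad U]$ from $\pi_0(A_{r,\infty,K}(H))$ to $\pi_0(\partial G - \Lambda H)$ is well-defined by Lemma~\ref{lem:projecting-paths}. For surjectivity, Lemma~\ref{lem:shadow-covers} tells me that any component $C$ of $\partial G - \Lambda H$ meets some $\shad U$, and by well-definedness $C$ is exactly the image of $[U]$. For injectivity, suppose components $U_1 \neq U_2$ map to the same component $C$. Then by the third shadow lemma each $\shad U_i$ is a clopen subset of $\partial G - \Lambda H$, hence clopen in $C$; non-emptiness (Lemma~\ref{lem:shad-nonempty}) together with connectedness of $C$ forces $\shad U_1 = C = \shad U_2$, contradicting the disjointness clause of Lemma~\ref{lem:shadow-covers}.

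Finally, the last lemma together with our choice of $R$ yields that the inclusion $A_{r,R,K}(H) \hookrightarrow A_{r,\infty,K}(H)$ induces a bijection on $\pi_0$, so the composition is the required bijection. I do not expect any serious obstacle beyond careful bookkeeping of the thresholds: the geometric content is already packaged in the five preceding lemmas, and the only real task is to check that their hypotheses can be satisfied simultaneously by one explicit, computable triple $(r, K, R)$, which is possible since each condition is merely a lower bound.
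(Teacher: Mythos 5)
Your proof is correct and takes essentially the same route as the paper, which itself simply asserts that the proposition follows by combining the five preceding lemmas; you have made that combination explicit and your threshold bookkeeping (well-definedness from Lemma~\ref{lem:projecting-paths} plus non-emptiness, surjectivity from Lemma~\ref{lem:shadow-covers}, injectivity from clopenness and disjointness, first bijection from the $A_{r,R,K}\hookrightarrow A_{r,\infty,K}$ lemma) is exactly right. One trivial caveat: setting $K := r + Q + \delta + C$ gives $K > r$ strictly only because $C = 3\delta > 0$, which holds since a one-ended hyperbolic group cannot have a $0$-hyperbolic Cayley graph; adding $+1$ would remove any need for this observation.
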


For the remainder of this paper we fix $r$, $R$, $K$ such that the above is satisfied. 
We conclude this section with a final remark that $H$ acts on $\pi_0(A_{r,R,K}(H))$ and $\pi_0(\partial G - \Lambda H)$ by permutations, and the above bijection is easily seen to be equivariant with respect to this action.

\subsection{Ends of pairs, revisited}\label{sec:ends-revisited}

We will conclude this section by using the above machinery to relate the number of (filtered) ends of a quasiconvex subgroup to its limit set complement. 
We begin with the following characterisation of filtered ends. 

\begin{theorem}\label{thm:hyp-filteredends}
Let $G$ be a one-ended hyperbolic group and $H$ a quasiconvex subgroup. Then $\tilde e(G,H)$ is equal to the number of components of $\partial G - \Lambda H$. 
\end{theorem}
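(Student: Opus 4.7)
The plan is to match filtered ends of $(X_G, \mathcal L)$ bijectively with components of $\partial G - \Lambda H$ by routing through components of $A_{r,\infty,K}(H)$ via Proposition~\ref{prop:arrk-comp-corr}. I would first build a map $\Psi : \pi_0(\partial G - \Lambda H) \to \{\text{filtered ends of } (X_G, \mathcal L)\}$. Given a component $V$, pick any $p \in V$ and a geodesic ray $\gamma$ from $1$ with $\gamma(\infty) = p$. By Lemma~\ref{lem:liminf} we have $d(\gamma(t), H) \to \infty$, which together with the properness of $\gamma$ verifies each of the four axioms of a filtered map with respect to the lift $\mathcal L$ of a finite filtration on $H\backslash X_G$; hence $\gamma$ is a filtered ray, and I set $\Psi(V) := [\gamma]$. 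Well-definedness comes from path-connectedness of $V$ (Lemma~\ref{lem:projecting-paths}) together with Proposition~\ref{prop:arrk-comp-corr}: any two such rays $\gamma_1$, $\gamma_2$ eventually enter the same component $U$ of $A_{r,\infty,K}(H)$ (namely the one matched to $V$), and a path in $U$ joining their tails can be extended to a filtered homotopy.

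For injectivity, suppose $\gamma_1$, $\gamma_2$ are filtered homotopic via some filtered homotopy $\widetilde H$. The filtered map axioms applied to $\widetilde H$ force $d(\widetilde H(u, s), H) \to \infty$ uniformly in $u \in [0,1]$, so for some $s_0$ the set $\widetilde H([0,1]\times[s_0, \infty))$ lies inside $A_{r,\infty,K}(H)$; being continuous and connected, it lies inside a single component $U$. The shadow $\shad U$ is then a single component of $\partial G - \Lambda H$ containing both $\gamma_1(\infty)$ and $\gamma_2(\infty)$, so $V_1 = V_2$. For surjectivity, let $\gamma$ be any filtered ray based at $1$; axiom (4) gives $d(\gamma(t), H) \to \infty$, so $\gamma([s_0,\infty))$ lies in $A_{r,\infty,K}(H)$ for some $s_0$, and by continuity in a single component $U$. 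Setting $V := \shad U$ and letting $\gamma_V$ be a geodesic representative of a chosen $p \in V$, I would construct a filtered homotopy between $\gamma$ and $\gamma_V$ by combining a ``bounded initial adjustment'' inside a finite subcomplex of $X_G$ with, for each $s \geq s_0$, a path inside the open connected set $U$ joining $\gamma(s)$ to $\gamma_V(s)$.

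The principal obstacle is the last construction: the homotopy $\widetilde H$ itself must be a filtered map, not merely each slice $\widetilde H_u$. The subtlety is that the parameterised paths in $U$ joining $\gamma(s)$ to $\gamma_V(s)$ must have minimum distance to $H$ tending to infinity with $s$, so that $\widetilde H$ satisfies axioms (3) and (4). This is achievable because $U$ is an open connected subgraph of the locally finite graph $\Gamma$ in which the function $d(\cdot, H)$ is unbounded along any tail of $\gamma$; picking for each $s$ a connecting path lying in $U \cap N_{r(s), \infty}(H)$ for an appropriate $r(s) \to \infty$ yields the desired uniformity. Combined with the proper homotopy of the bounded initial segments inside a finite subcomplex of $X_G$, this produces a genuine filtered homotopy and completes the bijection $\Psi$, establishing $\tilde e(G,H) = |\pi_0(\partial G - \Lambda H)|$.
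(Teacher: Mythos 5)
Your argument follows the same route as the paper: use Lemma~\ref{lem:liminf} and the path-connectedness of components from Lemma~\ref{lem:projecting-paths} (via Proposition~\ref{prop:arrk-comp-corr}) to match filtered ends with components of $\partial G - \Lambda H$. The main difference is that you supply considerably more detail than the paper does --- in particular on the surjectivity direction (that an arbitrary filtered ray eventually lives in a single component $U$ of $A_{r,\infty,K}(H)$ and can be filtered-homotoped to a geodesic representative) and on the subtlety that the homotopy itself, not just its time-slices, must be a filtered map; the paper disposes of both points with ``it is easy to see,'' so your elaboration is a correct and welcome filling-in of those steps rather than a different argument.
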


\begin{proof}
Let $N \in \mathbb N \cup \{\infty\}$ denote the number of components of $\partial G - \Lambda H$. Choose a ray $\gamma_i$ based in each component $C_i$ of $\partial G - \Lambda H$. If $i \neq j$, it is easy to see that there is no filtered homotopy between $\gamma_i$ and $\gamma_j$. This proves that $\tilde e(G,H) \geq N$. 

Conversely, fix a component $C$ of $\partial G - \Lambda H$. It is clear if $\gamma$ is a ray such that $\gamma(\infty) \in C$, then $\gamma$ is a filtered ray.  Now, if $\gamma'$ is another ray such that $\gamma'(\infty) \in C$, then by Lemma~\ref{lem:projecting-paths} we have that $C$ is path connected. So consider a path through $C$ between $\gamma(\infty)$ and $\gamma'(\infty)$, then it is easy to see that this induces a filtered homotopy between $\gamma$ and $\gamma'$ in the Cayley complex. Thus, $\tilde e (G,H) \leq N$. 
\end{proof}

Note that $H$ acts on $\partial G - \Lambda H$ by homeomorphisms. In particular, $H$ permutes the connected components of this set. This gives us the following. 

\begin{theorem}\label{thm:hyp-endsofpairs}
Let $G$ be a one-ended hyperbolic group and $H$ a quasiconvex subgroup. Then $e(G,H)$ is equal to the number of $H$-orbits of components of $\partial G - \Lambda H$. 
\end{theorem}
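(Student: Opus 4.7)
The plan is to combine Proposition~\ref{prop:coset-ends}, which identifies $e(G,H)$ with the number of ends $e(H \backslash \Gamma)$ of the coset graph, with the shadow bijection of Proposition~\ref{prop:arrk-comp-corr}, transported through the quotient map $p : \Gamma \to H \backslash \Gamma$. The argument is a close cousin of the one used in Theorem~\ref{thm:hyp-filteredends}, except that one must also track the $H$-action throughout. Since $d_{H \backslash \Gamma}(Hx, H \cdot 1) = d_\Gamma(x, H)$, one has $p^{-1}(B_n(H \cdot 1)) = N_n(H)$, so I would begin by noting that components of $(H \backslash \Gamma) - B_n(H \cdot 1)$ correspond bijectively to $H$-orbits of components of $\Gamma - N_n(H)$, and that one such component is unbounded in the quotient if and only if the lifts of the corresponding orbit contain points arbitrarily far from $H$ in $\Gamma$.

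Next, I would fix $r \leq K \leq R$ as in Proposition~\ref{prop:arrk-comp-corr} and check that the components of $A_{r,\infty,K}(H)$ are precisely the unbounded components of $N_{r, \infty}(H)$. Lemmas~\ref{lem:shad-nonempty} and~\ref{lem:liminf} together supply one direction: each component of $A_{r,\infty,K}(H)$ has non-empty shadow, and hence contains a ray along which the distance to $H$ tends to infinity. Conversely, any unbounded component of $N_{r,\infty}(H)$ must meet $C_K(H)$ and thus lies in $A_{r,\infty,K}(H)$. Combining this with Proposition~\ref{prop:arrk-comp-corr}, the shadow map restricts to an $H$-equivariant bijection between unbounded components of $\Gamma - N_r^{\circ}(H)$ and components of $\partial G - \Lambda H$; quotienting by $H$ then yields a bijection between unbounded components of $(H \backslash \Gamma) - B_r(H \cdot 1)$ and elements of $H \backslash \pi_0(\partial G - \Lambda H)$.

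To finish, I would identify $e(H \backslash \Gamma)$ with this cardinality by passing to the inverse limit over $n \geq r$. The same analysis produces such a bijection at every level, and the transition maps factor through these compatible shadow bijections, so the inverse system collapses to the single set $H \backslash \pi_0(\partial G - \Lambda H)$, giving the claimed equality. The hard part will be verifying that these transition maps are bijections and not merely surjections; this should follow from the observation that a ray representing a point of $\shad U$ for a component $U$ of $A_{r, \infty, K}(H)$ eventually enters the unique sub-component $U' \subset U$ in $A_{r', \infty, K'}(H)$, so that the shadow correspondences at different levels are mutually compatible.
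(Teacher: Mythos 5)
Your proof is correct and relies on the same core ingredients as the paper's: Proposition~\ref{prop:coset-ends} to interpret $e(G,H)$ as $e(H\backslash\Gamma)$, Proposition~\ref{prop:arrk-comp-corr} to biject components of $A_{r,\infty,K}(H)$ with components of $\partial G - \Lambda H$, and the $H$-equivariance of the shadow map remarked on after that proposition. The difference is how you compute $e(H\backslash\Gamma)$: the paper gives an informal argument building the coset graph by a sequence of coset-collapsing identifications and asserting the end count at each stage, whereas you work directly with the exhaustion definition of ends, using $p^{-1}(B_n(H\cdot 1)) = N_n(H)$ to translate unbounded components of the punctured quotient into $H$-orbits of unbounded components of $N_{n,\infty}(H)$, and then identifying those with components of $A_{n,\infty,K_n}(H)$. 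Your version makes explicit two things the paper leaves implicit: first, that unbounded components of $N_{r,\infty}(H)$ coincide with the components of $A_{r,\infty,K}(H)$ (your argument via Lemmas~\ref{lem:shad-nonempty} and~\ref{lem:liminf} plus the intermediate-value observation is exactly right); second, that the transition maps in the inverse system of unbounded components stabilise. On the latter point you can spare yourself some work: once you know each level of the inverse system has the same finite cardinality (finiteness following from the cocompact action of $H$ on $A_{r,R,K}(H)$, as in Corollary~\ref{cor:ends-of-pair-finite}), the fact that the transition maps are always surjective forces them to be bijections, so verifying compatibility of shadows across scales, while true and a cleaner conceptual reason, is not strictly necessary. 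Overall this is the same proof strategy, rendered more carefully via the exhaustion definition of ends rather than the paper's identification heuristic.
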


\begin{proof}
Recall by Proposition~\ref{prop:coset-ends} that $e(G,H)$ is equal to the number of ends of the coset graph $H\backslash \Gamma$. Note that each right coset $Hg$ satisfies the following: $d(x,H)$ is constant across all $x \in Hg$. We call this value the \textit{height} of a coset. It is easy to see that at any given height there is only finitely many cosets.

Now, we will form the coset graph of $H$ by a sequence of identifications, in a way which will make the conclusion clear. First, identify all cosets which lie in $N_r(H)$ to points. Next, identify all points in $N_{r,R}(H) - A_{r,R,K}(H)$ which lie in the same coset. Now, in each component of $A_{r,\infty,K}(H)$, identify vertices which lie in the same coset of $H$. At this stage, it is clear we have a graph with $\tilde e(G,H)$ ends. The only remaining identifications to be made are identifying some of these ``ends''. We have that the number of ends of the final graph is then equal to the number of $H$-orbits of components of $A_{r,\infty,K}(H)$, which is equal to the number of $H$-orbits of components of $\partial G - \Lambda H$. But the number of ends of this graph is equal to $e(G,H)$ by Proposition~\ref{prop:coset-ends}, and so the result follows.
\end{proof}

The results of this section lead us to a straightforward proof of the following corollary.

\begin{corollary}\label{cor:ends-of-pair-finite}
Let $G$ be a one-ended hyperbolic group, and $H$ a quasiconvex subgroup. Then $e(G,H)$ is finite. In particular, $e(G,H) \leq |B_{2l + R}(1)|$, where $l$ is the longest length of a given generator of $H$. 
\end{corollary}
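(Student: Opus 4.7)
The plan is to combine the two main results of this subsection --- Theorem~\ref{thm:hyp-endsofpairs} and Proposition~\ref{prop:arrk-comp-corr} --- and finish with a short translation argument in $\Gamma$. First I would apply Theorem~\ref{thm:hyp-endsofpairs} to identify $e(G,H)$ with the number of $H$-orbits of connected components of $\partial G - \Lambda H$. Proposition~\ref{prop:arrk-comp-corr}, together with the $H$-equivariance remark immediately following it, provides an $H$-equivariant bijection between $\pi_0(\partial G - \Lambda H)$ and $\pi_0(A_{r,R,K}(H))$, so it suffices to bound the number of $H$-orbits of components of $A_{r,R,K}(H)$.

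The main step is then a simple translation observation inside $\Gamma$. Since $A_{r,R,K}(H) \subset N_R(H)$, every vertex $v$ of a component $U$ satisfies $d(v,H) \leq R$, so one may choose $h \in H$ with $d(v,h) \leq R$. The translated component $h^{-1}U$ then contains the vertex $h^{-1}v \in B_R(1)$, so every $H$-orbit of components of $A_{r,R,K}(H)$ has at least one representative which meets $B_R(1)$. The number of $H$-orbits is therefore at most the cardinality of $B_R(1)$, which in turn is bounded above by $|B_{2l+R}(1)|$. Finiteness is then immediate from local finiteness of $\Gamma$.

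No step here strikes me as a real obstacle: the substantive content of the result is already packaged into Theorem~\ref{thm:hyp-endsofpairs} and Proposition~\ref{prop:arrk-comp-corr}, and the translation trick is essentially a fundamental-domain observation for the free left action of $H$ on $\Gamma$. The extra $2l$ in the stated bound is slightly generous when compared with the cleaner estimate $|B_R(1)|$; presumably the author is allowing a harmless margin to reflect the way the $H$-action is encoded via its given generating set in $G$, rather than via the intrinsic word metric on $H$.
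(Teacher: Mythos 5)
Your proposal is correct and takes essentially the same route as the paper: reduce via Theorem~\ref{thm:hyp-endsofpairs} and the equivariant bijection of Proposition~\ref{prop:arrk-comp-corr} to counting $H$-orbits of components of $A_{r,R,K}(H)$, then bound this by a fundamental-domain count. Your translation argument makes the paper's ``size of a fundamental domain'' step explicit and in fact yields the marginally tighter bound $|B_R(1)|$, which is harmless since the statement only asserts the weaker $|B_{2l+R}(1)|$.
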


\begin{proof}
The action of $H$ on the components of $\partial G - \Lambda H$ is induced by the action of $H$ on the components of $A_{r,R,K}(H)$. The latter is a locally finite graph on which $H$ acts cocompactly, and so this must have finitely many $H$-orbits of connected components, since the quotient graph is finite. It follows that there can only be finitely many $H$-orbits of components of $\partial G - \Lambda H$, and the result follows by Theorem~\ref{thm:hyp-endsofpairs}. The precise bound follows from just bounding above the size of a fundamental domain for the action of $H$ on $A_{r,R,K}(H)$.
\end{proof}

The above results thus give us a full description of all possible $H$-almost invariant subsets of $G$, up to equivalence: any such set is equivalent to some union of $H$-orbits of components of $A_{r,\infty,K}(H)$. 
To ease notation, if $U$ is a union of connected components of $A_{r,\infty,K}(H)$, then let 
$$
U^\ast := A_{r,\infty,K}(H) - U. 
$$
It is easy to see that the symmetric difference $U^\ast \triangle (G - U)$ is $H$-finite, so this overloading of notation is not a problem for our purposes.  

Now, note that if $U$ is a connected component of $A_{r,\infty,K}(H)$, then $gU$ is a connected component of $A_{r,\infty,K}(gH)$. The connected components of the latter are in one-to-one correspondence with the components of $\partial G - \Lambda gH$, and we can define $\shad(gU)$ in the obvious way to realise this correspondence. Note that one has to be careful with the choice of basepoint, but this doesn't really matter as if we increase $r$ accordingly for each $g$, then it is easy to see that the choice of basepoint does not affect the properties of $\shad$.
We conclude this section by characterising via the boundary what it means for an $H$-almost invariant set $X$ to cross itself, in the sense of Definition~\ref{def:crossings}.

\begin{proposition}\label{prop:crossings-in-boundary}
Let $X \subset G$ be an $H$-almost invariant set, and let $U$ be the unique union of connected components of $A_{r,\infty,K}(H)$ which is equivalent to $X$. Then $X$ crosses itself if and only if there exists $g \in G$ such that 
$$
\shad U \cap \shad (gU^\ast), \ \ \ \shad U \cap \shad (gU), \ \ \ \shad (U^\ast) \cap \shad (gU), \ \ \ \shad (U^\ast) \cap \shad (gU^\ast)
$$ 
are all non-empty.
\end{proposition}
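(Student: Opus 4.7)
The plan is to split the proof into a key technical lemma relating $H$-infinite intersections in $G$ to non-empty shadow intersections in $\partial G$, and a short reduction from $X$ to $U$. I would first prove the following: for any union $V$ of components of $A_{r,\infty,K}(H)$ and any union $W$ of components of $A_{r,\infty,K}(gH)$, the intersection $V \cap W$ is $H$-infinite if and only if $\shad V \cap \shad W$ is non-empty.

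The ``if'' direction of this key lemma is the easier half. Given $p \in \shad V \cap \shad W$, take any geodesic ray $\gamma$ from $1$ with $\gamma(\infty)=p$. By the basepoint invariance of shadows (essentially the Hausdorff-distance argument in the proof of Lemma~\ref{lem:shadow-covers}), $\gamma(t)$ lies in both $V$ and $W$ for all sufficiently large $t$, and Lemma~\ref{lem:liminf} gives $d(\gamma(t),H)\to\infty$, so $V\cap W$ is $H$-infinite. For the ``only if'' direction, take $x_n \in V \cap W$ with $d(x_n,H)\to \infty$; the triangle inequality together with $d(H,gH)\leq |g|$ forces $d(x_n,gH)\to\infty$ as well. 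By visibility (Lemma~\ref{lem:visibility}) pick geodesic rays $\gamma_n$ from $1$ passing within $C$ of $x_n$. Running the argument of Lemma~\ref{lem:shad-nonempty} directly for $H$ (with $\gamma_n$), and then running it for $gH$ using a ray $\tilde \gamma_n$ based at $g$ near $x_n$ and transferring the result back to $\gamma_n$ via basepoint invariance, shows $\gamma_n$ is eventually trapped in the component $V_n\subset V$ of $A_{r,\infty,K}(H)$ through $x_n$ and in the component $W_n \subset W$ of $A_{r,\infty,K}(gH)$ through $x_n$. Hence $\gamma_n(\infty) \in \shad V \cap \shad W$.

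To finish, I reduce the crossing condition on $X$ to the same condition on $U$. Since $X \triangle U$ is $H$-invariant and $H$-finite it is contained in some $N_M(H)$, and applying $g$ on the left gives $gX \triangle gU \subset N_M(gH)$. Any sequence with $d(\cdot,H)\to \infty$ also has $d(\cdot,gH) \to \infty$, so it eventually avoids both $N_M(H)$ and $N_M(gH)$ and, moreover, by the observation that a component of $\{d(\cdot,H)\geq r\}$ missing $C_K(H)$ must lie within $N_{K-1}(H)$ (and similarly for $gH$), such tails eventually lie in $A_{r,\infty,K}(H) \cap A_{r,\infty,K}(gH)$. On this tail, membership in $X$ versus $U$ and in $gX$ versus $gU$ coincide, so the four $H$-infinite conditions defining the crossing of $X$ are equivalent to the corresponding four conditions with $U$ in place of $X$, interpreting $U^\ast$ as $A_{r,\infty,K}(H)-U$ and $gU^\ast$ as $A_{r,\infty,K}(gH)-gU$. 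Applying the key lemma to each pair $(V,W) \in \{(U,gU),(U,gU^\ast),(U^\ast,gU),(U^\ast,gU^\ast)\}$ then finishes the proof.

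The step I expect to be most delicate is the basepoint bookkeeping in the ``only if'' direction of the key lemma. Lemma~\ref{lem:shad-nonempty}'s trapping argument uses quasiconvexity of $H$ together with the fact that $1 \in H$, and does not transport verbatim to $gH$ with rays from $1$; the cleanest fix is to prove the trapping for $gH$ using a ray based at $g$ and then appeal to the fact that rays with the same boundary endpoint fellow-travel at Hausdorff distance at most $D$, which lets one port shadow membership between basepoints once $r$ is large enough, as is already built into our choice of parameters.
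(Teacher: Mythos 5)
Your proposal contains a genuine and recurring gap: the claim that the triangle inequality together with $d(H,gH)\leq |g|$ forces $d(x_n,gH)\to\infty$ whenever $d(x_n,H)\to\infty$ is false. The issue is that $d(x, gH) = d(g^{-1}x, H)$, and $d(g^{-1}x, H)$ can differ from $d(x,H)$ by the length of a \emph{conjugate} of $g$, which is unbounded. Concretely, in any hyperbolic group with a quasiconvex $H = \langle a \rangle$ and some $g = b \notin H$, the points $x_n = ba^n$ satisfy $d(x_n, H) \to \infty$ (by quasiconvexity) while $d(x_n, gH) = 0$ for all $n$. You invoke this false implication twice: once in the ``only if'' direction of your key lemma (to conclude that the $x_n$ are far from both $H$ and $gH$ so the trapping argument of Lemma~\ref{lem:shad-nonempty} applies on both sides), and once in the reduction from $X$ to $U$ (to conclude that tails of sequences escape both $N_M(H)$ and $N_M(gH)$). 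As written, neither step survives.

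The paper's proof avoids this problem precisely by citing \cite[Remark~1.13]{niblo2005minimal}, which characterises $H$-infiniteness of the four intersections via the stronger condition of containing \emph{balls of arbitrary diameter}. A ball of large radius contained in $U \cap gU$ is automatically far from both $H$ and $gH$ simultaneously, which is exactly the two-sided distance control your argument needs and cannot obtain from $H$-infiniteness alone. (This remark is doing real work: it rules out the a priori possible ``thin slab'' scenario where $U \cap gU$ is $H$-infinite by running off to infinity along $gH$ without ever becoming deep.) If you want a self-contained argument, you would need to reprove that equivalence rather than substitute the triangle-inequality shortcut. The ``if'' direction of your key lemma and the overall decomposition of the argument into a key lemma plus a reduction are otherwise reasonable and close in spirit to what the paper does.
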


\begin{proof}
It can be seen that $U \cap gU$ is $H$-infinite if and only if it contains arbitrarily large balls \cite[Remark~1.13]{niblo2005minimal}. This clearly implies that $\shad U \cap \shad (gU) \neq \emptyset$. 

Conversely, $\shad U \cap \shad (gU)$ is an open subset of $\partial G$, so if it is non-empty then it contains some open ball $B \subset \shad U \cap \shad (gU)$. It is follows quickly from this that $ U \cap gU$ must contain balls of arbitrary diameter. By symmetry in the other four cases, the result follows. 
\end{proof}

\section{Detection via Digraphs}

So far, we have related the connectivity of the limit set complement to the connectivity of a certain subgraph of the Cayley graph, but this subgraph is still infinite. In order to obtain any sort of algorithm we will need some way of understanding the global connectivity properties by looking only at a finite piece.

In this section, we will make headway towards this goal by constructing a certain \textit{labelled digraph}, whose connectivity and language encodes a great deal of information about the properties of the subgraph $A_{r,R,K}(H)$.

\subsection{Digraphs and their languages}

We begin by briefly introducing labelled digraphs. This area has many applications to the study of subgroups of free groups, and a good survey of this rich theory can be found in \cite{kapovich2002stallings}. 

\begin{definition}[Labelled digraphs]
Let $S$ be a finite set of symbols, closed under taking formal inverses, i.e. $S = S^{-1}$. An \textit{$S$-digraph} $\Delta$ is a finite graph where every edge is oriented and is labelled by some $s \in S$. 

We also require that if there is a directed edge $e$ labelled by $s \in S$ from $v$ to $v'$, then there is an edge labelled by $s^{-1}$ from $v'$ to $v$, which we denote by $e^{-1}$.  
\end{definition}

Note that in this definition we allow the possibility of single-edge loops, and we do not require our graphs to be connected. We now set up some notation.  

We write $V\Delta$ for the vertex set of $\Delta$. 
Denote by $o(e)$ and $t(e)$ the initial and terminal vertices of the oriented edge $e$, respectively. Denote by $\Lab(e)$ the label of the oriented edge $e$, so $\Lab(e^{-1}) = \Lab(e)^{-1}$. A \textit{path} through $\Delta$ is a sequence of oriented edges $e_1e_2 \ldots e_n$ such that $o(e_{i+1}) = t(e_i)$. We call this path a \textit{loop} if $o(e_1) = t(e_n)$. We define the \textit{inverse} of the path $p$ as the path $p^{-1} := e^{-1}_n\ldots e^{-1}_1$. Given $p$ as above, define $o(p) = o(e_1)$, $t(p) = t(e_n)$. 
We say that two paths $p$, $q$ are \textit{freely equal} if $o(p) = o(q)$, $t(p) = t(q)$, and the words $\Lab(p)$ and $\Lab(q)$ are equal in the free group $F(S)$. 

Given a set of symbols $S$, denote by $S^\ast$ the set of finite strings in $S$. If $p = e_1 \ldots e_n$ is a path through $\Delta$, define its label $\Lab(p) \in S^\ast$ as the formal string
$
\Lab(p) = \Lab(e_1) \ldots \Lab(e_n)$. We have the following definition.

\begin{definition}
Let $\Delta$ be an $S$-digraph, and let $v$, $v'$ be vertices of $\Delta$. Define the \textit{language of $\Delta$ from $v$ to $v'$} as the set 
$$
\mathcal L (\Delta, v, v') = \set {\Lab(p)}{\text{$p$ is a path in $\Delta$ from $v$ to $v'$}} \subset S^\ast.
$$
\end{definition}

Let $H$ be a group and $S \subset H$ a finite symmetric subset. Denote by $\pi : S^\ast \to H$ the obvious projection. Then we have the following key lemma.

\begin{lemma}\label{lem:image-of-language}
Let $\Delta$ be an $S$-digraph, where $S$ is a finite subset of a group $H$.
Then for every pair $v_0, v_1$ of vertices of $\Delta$, there is a finitely generated subgroup $K_{v_0} \leq H$ and a finite subset $T_{v_0,v_1} \subset H$ such that 
$$
\pi(\mathcal L(\Delta, v_0, v_1)) = \bigcup_{t \in T_{v_0,v_1}} K_{v_0} t.
$$
Moreover, $T_{v_0,v_1}$ and generators of $K_{v_0}$ can be computed effectively. 
\end{lemma}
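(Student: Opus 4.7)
The plan is to mirror the standard spanning-tree construction of the fundamental group of a graph, then transport it through the label map. First I would reduce to the case that $v_0$ and $v_1$ lie in the same connected component $\Delta_0$ of $\Delta$, since otherwise $\mathcal L(\Delta, v_0, v_1) = \emptyset$ and we may take $T_{v_0, v_1} = \emptyset$. Then I would fix a spanning tree $\mathcal T$ of $\Delta_0$ and, for each $v \in V\Delta_0$, let $\tau_v$ denote the unique reduced path in $\mathcal T$ from $v_0$ to $v$ (with $\tau_{v_0}$ the empty path). For each non-tree edge $e$ of $\Delta_0$, form the loop $\ell_e := \tau_{o(e)} \cdot e \cdot \tau_{t(e)}^{-1}$ based at $v_0$, and set $k_e := \pi(\Lab(\ell_e))$. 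I then define $K_{v_0}$ to be the subgroup of $H$ generated by these finitely many $k_e$, and take $T_{v_0, v_1} := \{t_{v_1}\}$ where $t_{v_1} := \pi(\Lab(\tau_{v_1}))$.

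The main claim to verify is that $\pi(\mathcal L(\Delta, v_0, v_0)) = K_{v_0}$. Since $S$ is symmetric inside $H$ and the inverse-edge condition ensures $\pi(\Lab(e^{-1})) = \pi(\Lab(e))^{-1}$, freely equal paths produce the same element of $H$ under $\pi \circ \Lab$, so this map factors through the free group $F(S)$. Concatenation and inversion of loops then exhibit $\pi(\mathcal L(\Delta, v_0, v_0))$ as a subgroup of $H$ containing each $k_e$, so it contains $K_{v_0}$. Conversely, given a loop $p = e_1 \cdots e_n$ at $v_0$, I would insert the freely trivial words $\tau_{t(e_i)}^{-1} \tau_{o(e_{i+1})}$ between consecutive edges, rewriting $p$ as a concatenation of loops $\tau_{o(e_i)} \cdot e_i \cdot \tau_{t(e_i)}^{-1}$ based at $v_0$; each such factor is either freely trivial (if $e_i$ is a tree edge) or equal to $\ell_e^{\pm 1}$ for some non-tree edge $e$, so $\pi(\Lab(p)) \in K_{v_0}$. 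The coset identity is then immediate: any path $p$ from $v_0$ to $v_1$ is freely equal to $(p \cdot \tau_{v_1}^{-1}) \cdot \tau_{v_1}$, where the first factor is a loop at $v_0$, giving $\pi(\Lab(p)) \in K_{v_0} t_{v_1}$, while the reverse inclusion follows by concatenating any loop based at $v_0$ with $\tau_{v_1}$.

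Effectivity is routine: the component $\Delta_0$, a spanning tree, and the paths $\tau_v$ with their labels are all computable from $\Delta$ by elementary graph algorithms, so generators of $K_{v_0}$ and the element $t_{v_1}$ are read off explicitly. There is no real obstacle here; the only conceptual subtlety is noting that although $\mathcal L(\Delta, v_0, v_1) \subset S^\ast$ lives in the free monoid, the symmetry of $S$ together with the inverse-edge convention allow the argument to pass through the free group $F(S)$, where the classical spanning-tree presentation of $\pi_1(\Delta_0, v_0)$ as a free group on the non-tree edges applies directly.
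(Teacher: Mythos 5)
Your argument is correct, and it takes a genuinely different (and cleaner) route than the paper's. Where the paper decomposes an arbitrary path by inductively peeling off maximal simple subloops $\ell_j$, conjugating them by prefixes of the path to form ``lollipop loops'' based at $v_0$, and taking as $K_{v_0}$ the subgroup generated by \emph{all} lollipop loops and as $T_{v_0,v_1}$ the labels of \emph{all} simple paths to $v_1$, you instead fix a spanning tree of the component of $v_0$ and invoke the classical presentation of $\pi_1$ of a graph: one generator per non-tree edge, with the tree providing a canonical translation $\tau_v$ between $v_0$ and every other vertex $v$. Both constructions compute $\pi\bigl(\mathcal L(\Delta,v_0,v_0)\bigr) = K_{v_0}$, but yours yields a single explicit coset representative $t_{v_1} = \pi(\Lab(\tau_{v_1}))$ rather than the possibly redundant set indexed by all simple $v_0$--$v_1$ paths, and it replaces the paper's path-length induction with the well-known freely-trivial/non-tree-edge dichotomy for the rewritten factors $\tau_{o(e_i)}\, e_i\, \tau_{t(e_i)}^{-1}$. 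Your version is a bit tighter and makes effectivity entirely transparent (spanning tree plus BFS labels); the paper's version avoids fixing a spanning tree and states the decomposition in a form that is perhaps closer to how it is used downstream, but the two are mathematically interchangeable for the purposes of the lemma.
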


\begin{proof}
To ease notation, let $\mathcal L = \mathcal L(\Delta, v_0, v_1)$. We first construct $K_{v_0}$ and $T_{v_0,v_1}$.
Consider the following sets of paths through $\Delta$.
\begin{itemize}
    \item $\Pi(v, v') = \{p : \textrm{$p$ is a simple path through $\Delta$ with $o(p) = v$, $t(p) = v'$}\}$,
    \item $\Lambda(v) = \{\ell : \textrm{$\ell$ is a simple loop through $\Delta$ with $o(\ell) = t(\ell) = v$}\}$,
    \item $\Lambda'(v) = \{p\ell p^{-1} : v' \in V\Delta, \ \ell \in \Lambda(v'), \ p \in \Pi(v, v') \}$.
\end{itemize}
Intuitively, $\Lambda'(v)$ is the set of ``lollipop loops'' based at $v$, which are formed by attaching a simple loop to a simple path and its inverse. 
Let $Y = \{ \pi(\Lab(\gamma)) : \gamma \in \Lambda'(v_0)\}$, and set $K_{v_0} = \langle Y \rangle$. Also define
$$
T_{v_0, v_1} = \{ \pi(\Lab(p)) : p \in \Pi(v_0, v_1) \}.
$$
The rest of this proof will be dedicated to showing the following statement. We claim that every path $\gamma$ through $\Delta$ with $o(\gamma) = v_0$ is freely equal to a path of the form $\alpha_1 \ldots \alpha_r q$, where each $\alpha_i \in \Lambda(v_0)$ and $q \in \Pi(v_0, t(\gamma))$. From this the lemma follows immediately. 

With the above claim in mind, let $\gamma$ be a path in $\Delta$ from $v_0$ to $v_1$ and assume without loss of generality that $\Lab(\gamma)$ is freely reduced. If $\gamma$ is a simple path or simple loop then the statement is trivial, so assume $\gamma$ self-intersects somewhere away from $o(\gamma) = v_0$. We then proceed by induction on the length of $\gamma$. We may decompose $\gamma$ into the form
$$
\gamma = p_0 \ell_1 p_1 \ell_2 p_2 \ldots \ell_k p_k,
$$
where the $p_j$ are (possibly trivial) simple paths such that $t(p_j) = o(p_{j+1})$ and each $\ell_j$ is a non-trivial simple loop in $\Lambda(o(p_j))$. 
For each $1 \leq j \leq k$ let
$$
\ell'_j := p_0 p_1 \ldots p_{j-1} \ell_j p_{j-1}^{-1} \ldots p_1 ^{-1} p_0 ^{-1}. 
$$
Inspection reveals then that $\gamma$ is freely equal to the path $\gamma' := \ell_1' \ell_2' \ldots \ell_k'p_0p_1\ldots p_k$. 
Consider the subpath $\gamma_j = p_0 \ldots p_j$. Each $\gamma_j$ is a path of strictly shorter length than $\gamma$. By the inductive hypothesis we have that $\gamma_j$ is of the form $s_1 \ldots s_kq$ where each $s_i  \in \Lambda'(v_0)$ and $q \in \Pi(v_0, t(p_j))$. 
Now we have that 
$$
\ell_{j+1}' = \gamma_{j-1} \ell_j \gamma_{j-1}^{-1} = s_1 \ldots s_i q \ell_j q^{-1} s_i^{-1} \ldots s_1^{-1}.
$$
Since $q\ell_j q^{-1} \in \Lambda'(v_0)$, we have that $\ell_j'$ is thus freely equal to a path of the form $\alpha_1 \ldots \alpha_r$ where each $\alpha_i \in \Lambda'(v_0)$. If we apply the inductive hypothesis once more to $\gamma_k$, then the claim follows. 
\end{proof}

\subsection{Adjacency digraphs}

Let $G$ be a finitely generated group and $\Gamma$ a Cayley graph of $G$. 
Let $H$ be a finitely generated subgroup of $G$.
Let $F \subset \Gamma$ be a finite subgraph, and consider $X = {HF}$. Then $X$ is a subgraph of $\Gamma$ on which $H$ acts cocompactly. We say that $F$ is a \textit{finite $H$-cover} of $X$. In principle, $X$ will have many finite $H$-covers.

Given such a finite set of vertices $F$, we form its $H$-\textit{adjacency set}, or just its \textit{adjacency set} $S_F$ as follows. Let 
$$
S_F = \set {s \in H}{ s \neq 1, \ sF \cap F \neq \emptyset}.
$$
Informally, $S_F$ is the finite set containing $s\in H$ such that $sF$ intersects $F$. Note that $S_F$ is symmetric, i.e $S_F = S_F^{-1}$. The fact that $S_F$ is finite follows from $F$ being finite and $\Gamma$ being locally finite.  

 We form the $S_F$-digraph $\Delta_F$, called the $F$-\textit{adjacency graph}, as follows. The vertex set  $V\Delta_F$ is precisely the set $\pi_0 (F)$ of connected components of $F$, and there is a directed edge labelled by $s \in S_F$ from $v$ to $v'$ if $v \cap sv' \neq \emptyset$. In particular, this implies that $v$ and $sv'$ are contained in the same connected component of $X$. It is easy to check that $\Delta_F$ is indeed a well-defined digraph. See Figure~\ref{fig:adj-digraph} for an example of this construction. 

 \begin{figure}
     \centering

 
\tikzset{
pattern size/.store in=\mcSize, 
pattern size = 5pt,
pattern thickness/.store in=\mcThickness, 
pattern thickness = 0.3pt,
pattern radius/.store in=\mcRadius, 
pattern radius = 1pt}
\makeatletter
\pgfutil@ifundefined{pgf@pattern@name@_lch571ju3}{
\pgfdeclarepatternformonly[\mcThickness,\mcSize]{_lch571ju3}
{\pgfqpoint{0pt}{0pt}}
{\pgfpoint{\mcSize+\mcThickness}{\mcSize+\mcThickness}}
{\pgfpoint{\mcSize}{\mcSize}}
{
\pgfsetcolor{\tikz@pattern@color}
\pgfsetlinewidth{\mcThickness}
\pgfpathmoveto{\pgfqpoint{0pt}{0pt}}
\pgfpathlineto{\pgfpoint{\mcSize+\mcThickness}{\mcSize+\mcThickness}}
\pgfusepath{stroke}
}}
\makeatother

 
\tikzset{
pattern size/.store in=\mcSize, 
pattern size = 5pt,
pattern thickness/.store in=\mcThickness, 
pattern thickness = 0.3pt,
pattern radius/.store in=\mcRadius, 
pattern radius = 1pt}
\makeatletter
\pgfutil@ifundefined{pgf@pattern@name@_9mdfs4c3p}{
\pgfdeclarepatternformonly[\mcThickness,\mcSize]{_9mdfs4c3p}
{\pgfqpoint{0pt}{0pt}}
{\pgfpoint{\mcSize+\mcThickness}{\mcSize+\mcThickness}}
{\pgfpoint{\mcSize}{\mcSize}}
{
\pgfsetcolor{\tikz@pattern@color}
\pgfsetlinewidth{\mcThickness}
\pgfpathmoveto{\pgfqpoint{0pt}{0pt}}
\pgfpathlineto{\pgfpoint{\mcSize+\mcThickness}{\mcSize+\mcThickness}}
\pgfusepath{stroke}
}}
\makeatother

 
\tikzset{
pattern size/.store in=\mcSize, 
pattern size = 5pt,
pattern thickness/.store in=\mcThickness, 
pattern thickness = 0.3pt,
pattern radius/.store in=\mcRadius, 
pattern radius = 1pt}
\makeatletter
\pgfutil@ifundefined{pgf@pattern@name@_utyw2w9k3}{
\pgfdeclarepatternformonly[\mcThickness,\mcSize]{_utyw2w9k3}
{\pgfqpoint{0pt}{0pt}}
{\pgfpoint{\mcSize+\mcThickness}{\mcSize+\mcThickness}}
{\pgfpoint{\mcSize}{\mcSize}}
{
\pgfsetcolor{\tikz@pattern@color}
\pgfsetlinewidth{\mcThickness}
\pgfpathmoveto{\pgfqpoint{0pt}{0pt}}
\pgfpathlineto{\pgfpoint{\mcSize+\mcThickness}{\mcSize+\mcThickness}}
\pgfusepath{stroke}
}}
\makeatother

 
\tikzset{
pattern size/.store in=\mcSize, 
pattern size = 5pt,
pattern thickness/.store in=\mcThickness, 
pattern thickness = 0.3pt,
pattern radius/.store in=\mcRadius, 
pattern radius = 1pt}
\makeatletter
\pgfutil@ifundefined{pgf@pattern@name@_4eii05wxz}{
\pgfdeclarepatternformonly[\mcThickness,\mcSize]{_4eii05wxz}
{\pgfqpoint{0pt}{0pt}}
{\pgfpoint{\mcSize+\mcThickness}{\mcSize+\mcThickness}}
{\pgfpoint{\mcSize}{\mcSize}}
{
\pgfsetcolor{\tikz@pattern@color}
\pgfsetlinewidth{\mcThickness}
\pgfpathmoveto{\pgfqpoint{0pt}{0pt}}
\pgfpathlineto{\pgfpoint{\mcSize+\mcThickness}{\mcSize+\mcThickness}}
\pgfusepath{stroke}
}}
\makeatother

 
\tikzset{
pattern size/.store in=\mcSize, 
pattern size = 5pt,
pattern thickness/.store in=\mcThickness, 
pattern thickness = 0.3pt,
pattern radius/.store in=\mcRadius, 
pattern radius = 1pt}
\makeatletter
\pgfutil@ifundefined{pgf@pattern@name@_tpd0c0vdo}{
\pgfdeclarepatternformonly[\mcThickness,\mcSize]{_tpd0c0vdo}
{\pgfqpoint{0pt}{-\mcThickness}}
{\pgfpoint{\mcSize}{\mcSize}}
{\pgfpoint{\mcSize}{\mcSize}}
{
\pgfsetcolor{\tikz@pattern@color}
\pgfsetlinewidth{\mcThickness}
\pgfpathmoveto{\pgfqpoint{0pt}{\mcSize}}
\pgfpathlineto{\pgfpoint{\mcSize+\mcThickness}{-\mcThickness}}
\pgfusepath{stroke}
}}
\makeatother

 
\tikzset{
pattern size/.store in=\mcSize, 
pattern size = 5pt,
pattern thickness/.store in=\mcThickness, 
pattern thickness = 0.3pt,
pattern radius/.store in=\mcRadius, 
pattern radius = 1pt}
\makeatletter
\pgfutil@ifundefined{pgf@pattern@name@_8qjvz9jam}{
\pgfdeclarepatternformonly[\mcThickness,\mcSize]{_8qjvz9jam}
{\pgfqpoint{0pt}{-\mcThickness}}
{\pgfpoint{\mcSize}{\mcSize}}
{\pgfpoint{\mcSize}{\mcSize}}
{
\pgfsetcolor{\tikz@pattern@color}
\pgfsetlinewidth{\mcThickness}
\pgfpathmoveto{\pgfqpoint{0pt}{\mcSize}}
\pgfpathlineto{\pgfpoint{\mcSize+\mcThickness}{-\mcThickness}}
\pgfusepath{stroke}
}}
\makeatother

 
\tikzset{
pattern size/.store in=\mcSize, 
pattern size = 5pt,
pattern thickness/.store in=\mcThickness, 
pattern thickness = 0.3pt,
pattern radius/.store in=\mcRadius, 
pattern radius = 1pt}
\makeatletter
\pgfutil@ifundefined{pgf@pattern@name@_l0kbzia19}{
\pgfdeclarepatternformonly[\mcThickness,\mcSize]{_l0kbzia19}
{\pgfqpoint{0pt}{-\mcThickness}}
{\pgfpoint{\mcSize}{\mcSize}}
{\pgfpoint{\mcSize}{\mcSize}}
{
\pgfsetcolor{\tikz@pattern@color}
\pgfsetlinewidth{\mcThickness}
\pgfpathmoveto{\pgfqpoint{0pt}{\mcSize}}
\pgfpathlineto{\pgfpoint{\mcSize+\mcThickness}{-\mcThickness}}
\pgfusepath{stroke}
}}
\makeatother

 
\tikzset{
pattern size/.store in=\mcSize, 
pattern size = 5pt,
pattern thickness/.store in=\mcThickness, 
pattern thickness = 0.3pt,
pattern radius/.store in=\mcRadius, 
pattern radius = 1pt}
\makeatletter
\pgfutil@ifundefined{pgf@pattern@name@_1ym057evs}{
\pgfdeclarepatternformonly[\mcThickness,\mcSize]{_1ym057evs}
{\pgfqpoint{0pt}{-\mcThickness}}
{\pgfpoint{\mcSize}{\mcSize}}
{\pgfpoint{\mcSize}{\mcSize}}
{
\pgfsetcolor{\tikz@pattern@color}
\pgfsetlinewidth{\mcThickness}
\pgfpathmoveto{\pgfqpoint{0pt}{\mcSize}}
\pgfpathlineto{\pgfpoint{\mcSize+\mcThickness}{-\mcThickness}}
\pgfusepath{stroke}
}}
\makeatother
\tikzset{every picture/.style={line width=0.75pt}} 

\begin{tikzpicture}[x=0.75pt,y=0.75pt,yscale=-1,xscale=1]

\draw  [color={rgb, 255:red, 208; green, 2; blue, 27 }  ,draw opacity=1 ][fill={rgb, 255:red, 208; green, 2; blue, 27 }  ,fill opacity=0.54 ] (100.29,58.02) -- (118.91,65.45) -- (121.93,93.21) -- (108.34,112.58) -- (91.74,113.87) -- (81.17,98.7) -- (81.92,82.56) -- (90.48,64.48) -- cycle ;
\draw  [color={rgb, 255:red, 208; green, 2; blue, 27 }  ,draw opacity=1 ][fill={rgb, 255:red, 208; green, 2; blue, 27 }  ,fill opacity=0.54 ] (150.37,63.51) -- (168.24,67.71) -- (175.53,90.63) -- (161.69,106.44) -- (146.59,108.38) -- (134.77,96.11) -- (135.52,79.97) -- (146.34,81.59) -- cycle ;
\draw  [color={rgb, 255:red, 208; green, 2; blue, 27 }  ,draw opacity=1 ][fill={rgb, 255:red, 208; green, 2; blue, 27 }  ,fill opacity=0.54 ] (103.81,131.94) -- (113.13,145.18) -- (112.12,156.48) -- (106.58,159.38) -- (99.03,163.9) -- (88.72,163.9) -- (79.15,153.25) -- (87.21,137.75) -- cycle ;
\draw  [color={rgb, 255:red, 208; green, 2; blue, 27 }  ,draw opacity=1 ][fill={rgb, 255:red, 208; green, 2; blue, 27 }  ,fill opacity=0.54 ] (165.97,131.94) -- (174.28,142.6) -- (168.49,157.77) -- (162.95,160.67) -- (155.4,165.19) -- (145.08,165.19) -- (136.53,144.53) -- (144.83,129.69) -- cycle ;
\draw  [color={rgb, 255:red, 74; green, 144; blue, 226 }  ,draw opacity=1 ][pattern=_lch571ju3,pattern size=6pt,pattern thickness=0.75pt,pattern radius=0pt, pattern color={rgb, 255:red, 74; green, 144; blue, 226}] (178.86,27.16) -- (197.48,34.58) -- (200.5,62.34) -- (186.91,81.71) -- (170.31,83) -- (159.74,67.83) -- (160.49,51.69) -- (169.05,33.62) -- cycle ;
\draw  [color={rgb, 255:red, 74; green, 144; blue, 226 }  ,draw opacity=1 ][pattern=_9mdfs4c3p,pattern size=6pt,pattern thickness=0.75pt,pattern radius=0pt, pattern color={rgb, 255:red, 74; green, 144; blue, 226}] (78.65,104.83) -- (96.52,109.03) -- (103.81,131.94) -- (89.97,147.76) -- (74.88,149.7) -- (63.05,137.43) -- (63.8,121.29) -- (74.62,122.91) -- cycle ;
\draw  [color={rgb, 255:red, 74; green, 144; blue, 226 }  ,draw opacity=1 ][pattern=_utyw2w9k3,pattern size=6pt,pattern thickness=0.75pt,pattern radius=0pt, pattern color={rgb, 255:red, 74; green, 144; blue, 226}] (50.04,29.92) -- (59.35,43.16) -- (58.34,54.45) -- (52.81,57.36) -- (45.26,61.88) -- (34.94,61.88) -- (25.38,51.23) -- (33.43,35.73) -- cycle ;
\draw  [color={rgb, 255:red, 74; green, 144; blue, 226 }  ,draw opacity=1 ][pattern=_4eii05wxz,pattern size=6pt,pattern thickness=0.75pt,pattern radius=0pt, pattern color={rgb, 255:red, 74; green, 144; blue, 226}] (168.07,161.52) -- (176.37,172.18) -- (170.58,187.35) -- (165.05,190.25) -- (157.5,194.77) -- (147.18,194.77) -- (138.62,174.11) -- (146.93,159.26) -- cycle ;
\draw  [color={rgb, 255:red, 126; green, 211; blue, 33 }  ,draw opacity=1 ][pattern=_tpd0c0vdo,pattern size=6pt,pattern thickness=0.75pt,pattern radius=0pt, pattern color={rgb, 255:red, 126; green, 211; blue, 33}] (66.7,57.13) -- (83.31,63.76) -- (86,88.55) -- (73.88,105.85) -- (59.07,107) -- (49.64,93.45) -- (50.32,79.04) -- (57.95,62.89) -- cycle ;
\draw  [color={rgb, 255:red, 126; green, 211; blue, 33 }  ,draw opacity=1 ][pattern=_8qjvz9jam,pattern size=6pt,pattern thickness=0.75pt,pattern radius=0pt, pattern color={rgb, 255:red, 126; green, 211; blue, 33}] (109.51,27.68) -- (127.38,31.87) -- (134.68,54.79) -- (120.84,70.61) -- (105.74,72.55) -- (93.91,60.28) -- (94.67,44.14) -- (105.49,45.75) -- cycle ;
\draw  [color={rgb, 255:red, 126; green, 211; blue, 33 }  ,draw opacity=1 ][pattern=_l0kbzia19,pattern size=6pt,pattern thickness=0.75pt,pattern radius=0pt, pattern color={rgb, 255:red, 126; green, 211; blue, 33}] (167.72,101.25) -- (177.03,114.49) -- (176.02,125.79) -- (170.49,128.69) -- (162.94,133.21) -- (152.62,133.21) -- (143.06,122.56) -- (151.11,107.06) -- cycle ;
\draw  [color={rgb, 255:red, 126; green, 211; blue, 33 }  ,draw opacity=1 ][pattern=_1ym057evs,pattern size=6pt,pattern thickness=0.75pt,pattern radius=0pt, pattern color={rgb, 255:red, 126; green, 211; blue, 33}] (52.12,135.48) -- (60.43,146.13) -- (54.64,161.31) -- (49.1,164.21) -- (41.55,168.73) -- (31.24,168.73) -- (22.68,148.07) -- (30.98,133.22) -- cycle ;
\draw [color={rgb, 255:red, 74; green, 144; blue, 226 }  ,draw opacity=1 ][line width=1.5]    (369.5,52.88) .. controls (385.5,69.67) and (392.64,73.25) .. (423.18,71.32) ;
\draw [shift={(426.08,71.12)}, rotate = 175.95] [color={rgb, 255:red, 74; green, 144; blue, 226 }  ,draw opacity=1 ][line width=1.5]    (14.21,-4.28) .. controls (9.04,-1.82) and (4.3,-0.39) .. (0,0) .. controls (4.3,0.39) and (9.04,1.82) .. (14.21,4.28)   ;
\draw [color={rgb, 255:red, 74; green, 144; blue, 226 }  ,draw opacity=1 ][line width=1.5]    (375.57,43.3) .. controls (404.68,47.7) and (415.19,53.56) .. (427.02,59.43) ;
\draw [shift={(372.32,42.83)}, rotate = 8.01] [color={rgb, 255:red, 74; green, 144; blue, 226 }  ,draw opacity=1 ][line width=1.5]    (14.21,-4.28) .. controls (9.04,-1.82) and (4.3,-0.39) .. (0,0) .. controls (4.3,0.39) and (9.04,1.82) .. (14.21,4.28)   ;
\draw [color={rgb, 255:red, 74; green, 144; blue, 226 }  ,draw opacity=1 ][line width=1.5]    (341.21,132.86) .. controls (358.21,115.07) and (398.08,94.18) .. (427.6,76.24) ;
\draw [shift={(429.85,74.86)}, rotate = 148.45] [color={rgb, 255:red, 74; green, 144; blue, 226 }  ,draw opacity=1 ][line width=1.5]    (14.21,-4.28) .. controls (9.04,-1.82) and (4.3,-0.39) .. (0,0) .. controls (4.3,0.39) and (9.04,1.82) .. (14.21,4.28)   ;
\draw [color={rgb, 255:red, 74; green, 144; blue, 226 }  ,draw opacity=1 ][line width=1.5]    (433.42,173.29) .. controls (449.79,228.49) and (486.89,199.46) .. (440.22,162.79) ;
\draw [shift={(432.44,169.81)}, rotate = 75.22] [color={rgb, 255:red, 74; green, 144; blue, 226 }  ,draw opacity=1 ][line width=1.5]    (14.21,-4.28) .. controls (9.04,-1.82) and (4.3,-0.39) .. (0,0) .. controls (4.3,0.39) and (9.04,1.82) .. (14.21,4.28)   ;
\draw [color={rgb, 255:red, 126; green, 211; blue, 33 }  ,draw opacity=1 ][line width=1.5]    (443.47,51.97) .. controls (427.23,39.7) and (393.38,17.5) .. (358.89,29.5) ;
\draw [shift={(445.88,53.82)}, rotate = 217.77] [color={rgb, 255:red, 126; green, 211; blue, 33 }  ,draw opacity=1 ][line width=1.5]    (14.21,-4.28) .. controls (9.04,-1.82) and (4.3,-0.39) .. (0,0) .. controls (4.3,0.39) and (9.04,1.82) .. (14.21,4.28)   ;
\draw [color={rgb, 255:red, 126; green, 211; blue, 33 }  ,draw opacity=1 ][line width=1.5]    (349.13,142.97) .. controls (400.79,128.4) and (416.62,105.21) .. (438.57,79.54) ;
\draw [shift={(345.92,143.85)}, rotate = 345.03] [color={rgb, 255:red, 126; green, 211; blue, 33 }  ,draw opacity=1 ][line width=1.5]    (14.21,-4.28) .. controls (9.04,-1.82) and (4.3,-0.39) .. (0,0) .. controls (4.3,0.39) and (9.04,1.82) .. (14.21,4.28)   ;
\draw [color={rgb, 255:red, 126; green, 211; blue, 33 }  ,draw opacity=1 ][line width=1.5]    (345.16,155.3) .. controls (383.56,167.09) and (391.43,168.65) .. (419.48,159.98) ;
\draw [shift={(342.15,154.37)}, rotate = 17.11] [color={rgb, 255:red, 126; green, 211; blue, 33 }  ,draw opacity=1 ][line width=1.5]    (14.21,-4.28) .. controls (9.04,-1.82) and (4.3,-0.39) .. (0,0) .. controls (4.3,0.39) and (9.04,1.82) .. (14.21,4.28)   ;
\draw [color={rgb, 255:red, 126; green, 211; blue, 33 }  ,draw opacity=1 ][line width=1.5]    (343.2,49.85) .. controls (302.85,59.58) and (309.3,88.95) .. (352.05,54.75) ;
\draw [shift={(346.39,49.14)}, rotate = 168.54] [color={rgb, 255:red, 126; green, 211; blue, 33 }  ,draw opacity=1 ][line width=1.5]    (14.21,-4.28) .. controls (9.04,-1.82) and (4.3,-0.39) .. (0,0) .. controls (4.3,0.39) and (9.04,1.82) .. (14.21,4.28)   ;
\draw  [fill={rgb, 255:red, 225; green, 225; blue, 225 }  ,fill opacity=1 ] (345.45,42.83) .. controls (345.45,35.47) and (351.47,29.5) .. (358.89,29.5) .. controls (366.31,29.5) and (372.32,35.47) .. (372.32,42.83) .. controls (372.32,50.19) and (366.31,56.16) .. (358.89,56.16) .. controls (351.47,56.16) and (345.45,50.19) .. (345.45,42.83) -- cycle ;
\draw  [fill={rgb, 255:red, 225; green, 225; blue, 225 }  ,fill opacity=1 ] (425.13,66.21) .. controls (425.13,58.85) and (431.15,52.88) .. (438.57,52.88) .. controls (445.99,52.88) and (452.01,58.85) .. (452.01,66.21) .. controls (452.01,73.57) and (445.99,79.54) .. (438.57,79.54) .. controls (431.15,79.54) and (425.13,73.57) .. (425.13,66.21) -- cycle ;
\draw  [fill={rgb, 255:red, 225; green, 225; blue, 225 }  ,fill opacity=1 ] (319.04,143.85) .. controls (319.04,136.49) and (325.06,130.52) .. (332.48,130.52) .. controls (339.9,130.52) and (345.92,136.49) .. (345.92,143.85) .. controls (345.92,151.21) and (339.9,157.18) .. (332.48,157.18) .. controls (325.06,157.18) and (319.04,151.21) .. (319.04,143.85) -- cycle ;
\draw  [fill={rgb, 255:red, 225; green, 225; blue, 225 }  ,fill opacity=1 ] (419,156.48) .. controls (419,149.11) and (425.02,143.15) .. (432.44,143.15) .. controls (439.86,143.15) and (445.88,149.11) .. (445.88,156.48) .. controls (445.88,163.84) and (439.86,169.81) .. (432.44,169.81) .. controls (425.02,169.81) and (419,163.84) .. (419,156.48) -- cycle ;

\draw (91.49,77.79) node [anchor=north west][inner sep=0.75pt]  [font=\small,color={rgb, 255:red, 208; green, 2; blue, 27 }  ,opacity=1 ]  {$A$};
\draw (149.85,83.05) node [anchor=north west][inner sep=0.75pt]  [font=\small,color={rgb, 255:red, 208; green, 2; blue, 27 }  ,opacity=1 ]  {$B$};
\draw (95.22,142.26) node [anchor=north west][inner sep=0.75pt]  [font=\small,color={rgb, 255:red, 208; green, 2; blue, 27 }  ,opacity=1 ]  {$C$};
\draw (146.72,135.89) node [anchor=north west][inner sep=0.75pt]  [font=\small,color={rgb, 255:red, 208; green, 2; blue, 27 }  ,opacity=1 ]  {$D$};
\draw (169.18,46.92) node [anchor=north west][inner sep=0.75pt]  [font=\footnotesize,color={rgb, 255:red, 74; green, 144; blue, 226 }  ,opacity=1 ]  {$aA$};
\draw (76.27,120.48) node [anchor=north west][inner sep=0.75pt]  [font=\footnotesize,color={rgb, 255:red, 74; green, 144; blue, 226 }  ,opacity=1 ]  {$aB$};
\draw (33.56,37.74) node [anchor=north west][inner sep=0.75pt]  [font=\footnotesize,color={rgb, 255:red, 74; green, 144; blue, 226 }  ,opacity=1 ]  {$aC$};
\draw (149.2,171.88) node [anchor=north west][inner sep=0.75pt]  [font=\footnotesize,color={rgb, 255:red, 74; green, 144; blue, 226 }  ,opacity=1 ]  {$aD$};
\draw (55.31,73.79) node [anchor=north west][inner sep=0.75pt]  [font=\footnotesize,color={rgb, 255:red, 126; green, 211; blue, 33 }  ,opacity=1 ]  {$bA$};
\draw (107.11,40.21) node [anchor=north west][inner sep=0.75pt]  [font=\footnotesize,color={rgb, 255:red, 126; green, 211; blue, 33 }  ,opacity=1 ]  {$bB$};
\draw (153.11,110.46) node [anchor=north west][inner sep=0.75pt]  [font=\footnotesize,color={rgb, 255:red, 126; green, 211; blue, 33 }  ,opacity=1 ]  {$bC$};
\draw (32.37,146.93) node [anchor=north west][inner sep=0.75pt]  [font=\footnotesize,color={rgb, 255:red, 126; green, 211; blue, 33 }  ,opacity=1 ]  {$bD$};
\draw (351.62,34.53) node [anchor=north west][inner sep=0.75pt]    {$A$};
\draw (431.86,58.38) node [anchor=north west][inner sep=0.75pt]    {$B$};
\draw (326.66,136.48) node [anchor=north west][inner sep=0.75pt]    {$C$};
\draw (426.12,149.58) node [anchor=north west][inner sep=0.75pt]    {$D$};
\draw (233.73,97.56) node [anchor=north west][inner sep=0.75pt]  [font=\Large]  {$\Longrightarrow $};
\draw (303.53,177.14) node [anchor=north west][inner sep=0.75pt]  [font=\normalsize]  {$\Delta _{F}$};
\draw (389.9,56.17) node [anchor=north west][inner sep=0.75pt]  [font=\footnotesize,color={rgb, 255:red, 74; green, 144; blue, 226 }  ,opacity=1 ]  {$a$};
\draw (360.2,98.26) node [anchor=north west][inner sep=0.75pt]  [font=\footnotesize,color={rgb, 255:red, 74; green, 144; blue, 226 }  ,opacity=1 ]  {$a$};
\draw (397.92,35.59) node [anchor=north west][inner sep=0.75pt]  [font=\footnotesize,color={rgb, 255:red, 74; green, 144; blue, 226 }  ,opacity=1 ]  {$a$};
\draw (464.4,185.25) node [anchor=north west][inner sep=0.75pt]  [font=\footnotesize,color={rgb, 255:red, 74; green, 144; blue, 226 }  ,opacity=1 ]  {$a$};
\draw (379.06,116.97) node [anchor=north west][inner sep=0.75pt]  [font=\footnotesize,color={rgb, 255:red, 126; green, 211; blue, 33 }  ,opacity=1 ]  {$b$};
\draw (385.19,150.64) node [anchor=north west][inner sep=0.75pt]  [font=\footnotesize,color={rgb, 255:red, 126; green, 211; blue, 33 }  ,opacity=1 ]  {$b$};
\draw (404.52,16.88) node [anchor=north west][inner sep=0.75pt]  [font=\footnotesize,color={rgb, 255:red, 126; green, 211; blue, 33 }  ,opacity=1 ]  {$b$};
\draw (303.61,59.91) node [anchor=north west][inner sep=0.75pt]  [font=\footnotesize,color={rgb, 255:red, 126; green, 211; blue, 33 }  ,opacity=1 ]  {$b$};
\draw (26.03,183.14) node [anchor=north west][inner sep=0.75pt]  [font=\normalsize]  {$F\cup aF\cup bF$};

\end{tikzpicture}

     \caption{An example of the construction of an adjacency digraph. Here, $\pi_0(F) = \{A,B, C,D\}$ and $S_F = \{a, b, a^{-1}, b^{-1}\}$. Due to symmetry we have not drawn the translates of $F$ by $a^{-1}$ or $b^{-1}$, nor the corresponding edges of $\Delta_F$.} 
     \label{fig:adj-digraph}
 \end{figure}
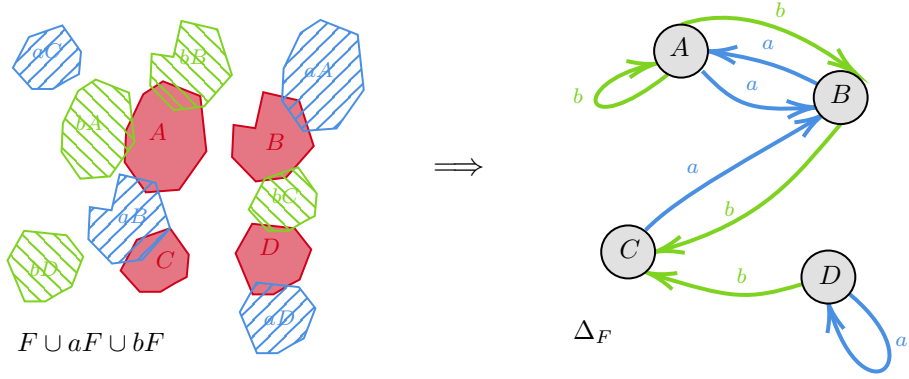
 
 The relevance of the adjacency digraph $\Delta_F$ to our problem is given by the following lemma.

\begin{lemma}\label{lem:adj-digraph}
Let $v, v' \in \pi_0 (F)$, and $h \in H$. Then $v$ and $hv'$ are contained in the same connected component of $X = {HF}$ if and only if $h \in \pi(\mathcal L(\Delta_F, v, v'))$. 
\end{lemma}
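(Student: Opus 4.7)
The plan is to prove both directions by establishing a correspondence between walks in the subgraph $X = HF$ of $\Gamma$ and paths in the adjacency digraph $\Delta_F$, where the label of a path in $\Delta_F$ records the product of the $S_F$-translations needed to glue consecutive translates of $F$ together.

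For the easy direction, suppose $p = e_1 e_2 \cdots e_n$ is a path in $\Delta_F$ from $v$ to $v'$ with $\Lab(e_i) = s_i$, so $h = s_1 s_2 \cdots s_n$. I would induct on $n$. The base case $n = 0$ is trivial. For the step, the first edge $e_1$ gives $v \cap s_1 v_1 \neq \emptyset$, so $v$ and $s_1 v_1$ lie in the same component of $X$. By induction applied to the shorter path $e_2 \cdots e_n$, the components $v_1$ and $(s_1^{-1}h) v'$ lie in the same component of $X$; translating by $s_1 \in H$ (which acts on $X$ by graph automorphisms) yields that $s_1 v_1$ and $hv'$ lie in the same component, and concatenation finishes this direction.

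For the converse, suppose $v$ and $hv'$ lie in the same component of $X$, and fix a walk $x_0, x_1, \ldots, x_n$ in $X$ with $x_0 \in v$ and $x_n \in hv'$. The key bookkeeping step is to pick, for each vertex $x_i$, an element $g_i \in H$ with $x_i \in g_i F$ (forcing $g_0 = 1$ and $g_n = h$), and for each edge $(x_{i-1}, x_i)$, an element $\tilde g_i \in H$ with the whole edge in $\tilde g_i F$. Then I interleave these to form the sequence $g_0, \tilde g_1, g_1, \tilde g_2, \ldots, \tilde g_n, g_n$; consecutive elements in this sequence differ by a right multiplication by an element of $S_F \cup \{1\}$, because their corresponding translates share a vertex. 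To each $g_i$ I associate the component $w_i \subset F$ containing $g_i^{-1} x_i$, and to each $\tilde g_i$ the component $\tilde w_i$ containing $\tilde g_i^{-1} x_{i-1}$ (which also contains $\tilde g_i^{-1} x_i$, since these are joined by an edge in $F$). Each consecutive transition then yields an edge of $\Delta_F$ (or a loop when the $H$-elements coincide), and the resulting walk in $\Delta_F$ goes from $w_0 = v$ to $w_n = v'$ with label $\prod s_i = g_0^{-1} g_n = h$.

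The only mildly delicate point is the last one: ensuring that the component attached to each $\tilde g_i$ is well-defined, which is precisely where I use that $\tilde g_i F$ contains the entire edge $(x_{i-1}, x_i)$ (so the two pre-images lie in a common component of $F$). Everything else is a straightforward unwinding of definitions, with the telescoping product of the $S_F$-labels giving exactly $h$.
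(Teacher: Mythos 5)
Your proof is correct and follows essentially the same strategy as the paper: induction on the path length for the forward direction, and reading off the interleaved sequence of $H$-translates of components of $F$ visited by a walk in $X$ for the converse, with the labels telescoping to $h$. The one place you are slightly more careful than the paper is the converse: the paper simply asserts that a path through $X$ passes through a chain of overlapping translates $h_0 v_0, h_1 v_1, \ldots$ of components of $F$, whereas you explicitly introduce the auxiliary elements $\tilde g_i$ to account for the fact that an edge of the walk need not lie inside either of the translates containing its endpoints; your treatment closes that small gap cleanly, and the resulting label is still $h$ by telescoping. (A tiny terminological nit: when $g_{i-1} = \tilde g_i$ the transition is not a loop edge of $\Delta_F$ but rather no transition at all, since $\Delta_F$ has no edges labelled by the identity; you clearly have the right idea, as the two components in question coincide, but ``loop'' is a misleading word here.)
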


\begin{proof}
Firstly, suppose that $h = \pi (w)$, where $w=s_1\ldots s_n \in \mathcal L(\Delta_F, v, v')$. Let $\gamma = e_1\ldots e_n$ be a path through $\Delta_F$ between $v$ and $v'$ labelled by $w$, so $e_i$ is labelled by $s_i$. Let $v = v_0$ and $v_{i} = t(e_{i})$ for each $1\leq i \leq n$. Observe that by definition we have that $v_i$ intersects $s_{i+1}v_{i+1}$ in $\Gamma$, and so $s_{i+1}v_{i+1}$ and $v_i$ must be contained within the same component of $X$. 
Since $w \in \mathcal L(\Delta_F, v, v')$, we have that $v_n = v'$.
Note also that $s_{i+1}\ldots s_n \in \mathcal L (\Delta_F, v_i, v_n)$.
We proceed by induction on $n$. Clearly if $n = 1$, then the result is true by the definition of $\Delta_F$. 

Suppose then that $n > 1$. Then $w' = s_{2}\ldots s_n \in \mathcal L (\Delta_F, v_2, v_n)$. Let $h' = \pi(w')$, then by the inductive hypothesis we have that $v_1$ and $h'v_n$ are contained within the same component of $X$. Then $v_0$ is contained within the same component as $s_1 v_1$ by definition. But this is contained in the same component as $s_1h'v_n = hv'$. 

Conversely, suppose that $v$ and $hv'$ lie in the same component of $X$ for some $h \in H$. Let $\gamma$ be a path through $X$ between $v$ and $hv'$. The path $\gamma$ will pass through a sequence of translates of components of $F$, say 
$$
v = h_0v_0, \  h_1v_1, \ h_2 v_2, \ \ldots , \ h_lv_l = hv',
$$
where $h_i \in H$, $h_0 = 1$, $v_i \in \pi_0 (F)$, and $h_{i-1}v_{i-1} \cap h_{i}v_{i} \neq \emptyset$ for every $1\leq i\leq n$.
By definition, we then have that $s_{i} := h_{i-1}^{-1}h_{i} \in S_F$, and there is an edge in $\Delta_F$ from $v_{i-1}$ to $v_i$ labelled by $s_i$. We thus deduce that $s_1\ldots s_n \in \mathcal L(\Delta_F, v, v')$. Since $h = s_1 \ldots s_n$, the result follows. 
\end{proof}

The next result is a characterisation of when the graph $X = HF$ has only finitely many components. Given $v, v' \in \pi_0(F)$, recall the definition of $K_v$ and $T_{v,v'}$ from Lemma~\ref{lem:image-of-language}. 

\begin{lemma}\label{lem:fin-components}
Let $F$ be a finite $H$-cover of $X =  {HF}$. Then $X$ has finitely many connected components if and only if for every $v \in \pi_0 (F)$ we have that $K_{v}$ has finite index in $H$. 
\end{lemma}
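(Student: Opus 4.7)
The plan is to identify the $H$-orbits of connected components of $X$ with the coset spaces $H / K_v$, reducing everything to Lemmas~\ref{lem:image-of-language} and~\ref{lem:adj-digraph}.

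First, I would verify that $\pi(\mathcal L(\Delta_F, v, v)) = K_v$ for every $v \in \pi_0(F)$. Note that $\pi(\mathcal L(\Delta_F, v, v))$ is closed under multiplication (concatenate loops based at $v$) and under inversion (reverse the path, using the inverse edges in $\Delta_F$), so it is a subgroup of $H$. It evidently contains $\pi(\Lab(\gamma))$ for every $\gamma \in \Lambda'(v)$, and hence contains the subgroup $K_v$. For the reverse inclusion, the decomposition argument carried out in the proof of Lemma~\ref{lem:image-of-language}, specialised to $v_0 = v_1 = v$ (so that the terminal simple path $q \in \Pi(v,v)$ is necessarily the trivial loop at $v$), expresses every loop at $v$ as freely equal to a finite product of elements of $\Lambda'(v)$. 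Thus $\pi(\mathcal L(\Delta_F, v, v)) = K_v$.

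Combining this identification with Lemma~\ref{lem:adj-digraph}, an element $h \in H$ satisfies $hv$ lies in the same component of $X$ as $v$ precisely when $h \in K_v$. In other words, $K_v$ is exactly the setwise stabiliser in $H$ of the component $C_v$ of $X$ containing $v$, so the $H$-orbit of $C_v$ has size $[H : K_v]$.

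To finish, since $X = HF$, every component $C$ of $X$ meets some translate $hF$ and so contains a translate $hv$ with $v \in \pi_0(F)$; consequently $C$ lies in the $H$-orbit of some $C_v$. Therefore the connected components of $X$ split into at most $|\pi_0(F)|$ many $H$-orbits, and
$$
|\pi_0(X)| \;=\; \sum_{[v]} [H : K_v],
$$
where the sum runs over a system of orbit representatives. Since $K_{hv} = h K_v h^{-1}$ has the same index as $K_v$, the finiteness of this sum is equivalent to $[H : K_v] < \infty$ holding for \emph{every} $v \in \pi_0(F)$, which yields the stated equivalence. The only mildly delicate point is the subgroup identification $\pi(\mathcal L(\Delta_F, v, v)) = K_v$; after that, the result is a straightforward orbit--stabiliser argument, so I do not anticipate any real obstacle.
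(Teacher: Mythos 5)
Your proof is correct and follows essentially the same approach as the paper's: both identify $K_v$ (via Lemmas~\ref{lem:image-of-language} and~\ref{lem:adj-digraph}) with the set of $h\in H$ for which $hv$ lies in the same component of $X$ as $v$, and then count components. The only cosmetic difference is that you phrase this as an orbit--stabiliser computation (making explicit that $T_{v,v}=\{1\}$ and that $K_v=\Stab_H(C_v)$), whereas the paper argues directly with finite and infinite left-transversals of $K_v$ in $H$; the underlying reasoning is the same.
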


\begin{proof}
By Lemmas~\ref{lem:image-of-language} and \ref{lem:adj-digraph}, note that $h \in K_v$ if and only if there is a path through $X$ from $v$ to $hv$, and thus the same also holds for $h'v$ and $h'hv$ for any $h' \in H$. So suppose first that some $K_v$ has finite index in $H$ and let $T$ be a finite left-transversal of $K_v$. Then $TK_v v = Hv$, and this intersects at most $|T|$ components of $X$. Repeat this for every $v \in \pi_0(F)$ and see that $X$ has at most 
$$
\sum_{v\in \pi_0(F)} |H : K_v| < \infty
$$
components.

Conversely, fix $v \in \pi_0(F)$ such that $|H: K_v| = \infty$ and let $T = \{h_i : i \in \N\}$ be an infinite left-transversal in $H$ of $K_v$. We claim that every $h_iv$ lies in a distinct component of $X$. Indeed, suppose that $h_iv$ and $h_jv$ were contained in the same component of $X$ for some $i \neq j$. Then by Lemmas~\ref{lem:image-of-language} and \ref{lem:adj-digraph} we have that $h_i^{-1}h_j \in K_v$. This contradicts our choice of $T$, so the lemma follows. 
\end{proof}

It is clear that $H$ permutes the connected components of $X$. Note that if $X$ is connected then $\Delta_F$ is certainly connected, though the converse is not necessarily true. Instead, the number of connected components of $\Delta_F$ actually encodes the following. 

\begin{lemma}\label{lem:comp-is-coends}
The number of connected components of $\Delta_F$ is equal to the number of $H$-orbits of components of $X = {HF}$. 
\end{lemma}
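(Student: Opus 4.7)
The plan is to construct a natural bijection between $\pi_0(\Delta_F)$ and the set of $H$-orbits of connected components of $X$, using Lemma~\ref{lem:adj-digraph} as the key input. Define a map
$$
\phi : \pi_0(\Delta_F) \longrightarrow H \backslash \pi_0(X)
$$
by sending the $\Delta_F$-component of a vertex $v \in \pi_0(F)$ to the $H$-orbit of the unique $X$-component of $X$ containing $v$ (viewed as a subset of $\Gamma$). I will check the three things: well-definedness, surjectivity, and injectivity.

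For well-definedness, suppose $v, v' \in \pi_0(F)$ lie in the same component of $\Delta_F$. Then there is a path in $\Delta_F$ from $v$ to $v'$, so $\mathcal{L}(\Delta_F, v, v')$ is non-empty, and by Lemma~\ref{lem:adj-digraph} there exists $h \in H$ such that $v$ and $hv'$ lie in the same component of $X$. Thus the $X$-component of $v$ and the $X$-component of $v'$ belong to the same $H$-orbit, so $\phi$ depends only on the $\Delta_F$-component.

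For surjectivity, let $C$ be any component of $X$. Since $X = HF$, there is some $h \in H$ and some $v \in \pi_0(F)$ with $hv \subseteq C$, and then the $X$-component of $v$ is $h^{-1}C$, which lies in the same $H$-orbit as $C$. Thus $\phi$ hits every $H$-orbit. For injectivity, suppose $\phi([v_1]) = \phi([v_2])$. Then there exists $h \in H$ such that $hv_1$ and $v_2$ lie in the same component of $X$, i.e.\ the component of $v_2$ contains $hv_1$. Applying Lemma~\ref{lem:adj-digraph} with the roles reversed gives $h \in \pi(\mathcal{L}(\Delta_F, v_2, v_1))$, so $\mathcal{L}(\Delta_F, v_2, v_1)$ is non-empty, which means there is a path in $\Delta_F$ from $v_2$ to $v_1$. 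Hence $[v_1] = [v_2]$ in $\pi_0(\Delta_F)$.

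There is no real obstacle here beyond bookkeeping; once Lemma~\ref{lem:adj-digraph} converts the combinatorial statement ``there is a path from $v$ to $v'$ in $\Delta_F$'' into the geometric statement ``some $H$-translate of $v'$ is in the $X$-component of $v$'', the bijection is essentially forced. The only mild subtlety is making sure to distinguish between ``same $X$-component'' and ``same $H$-orbit of $X$-components'', but this is precisely the discrepancy that passing from $\mathcal{L}(\Delta_F,v,v')$ (requiring a specific $h$) to mere non-emptiness of this language (requiring some $h$) is designed to absorb.
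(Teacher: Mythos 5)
Your proof is correct and takes essentially the same approach as the paper: both rest entirely on Lemma~\ref{lem:adj-digraph}, which translates ``path in $\Delta_F$ from $v$ to $v'$'' into ``some $H$-translate of $v'$ lies in the $X$-component of $v$.'' The paper states the conclusion ``follows immediately'' from this equivalence, whereas you spell out the resulting bijection $\pi_0(\Delta_F) \to H\backslash\pi_0(X)$ and verify well-definedness, surjectivity, and injectivity explicitly, but the underlying argument is the same.
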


\begin{proof}
By Lemma~\ref{lem:adj-digraph} we see that 
$v, v'\in \pi_0 (F)$ are connected by a path in $\Delta_F$ if and only if  $v$ is joined by a path through $X$ to some $H$-translate of $v'$. The lemma then follows immediately.
\end{proof}

\subsection{First algorithms}

The first application of our digraph machinery is the following. Throughout this subsection we fix constants $0\leq r \leq K \leq R < \infty$ and a Cayley graph $\Gamma$ of our one-ended hyperbolic group $G$. 

\begin{proposition}\label{prop:decide-arrk}
Let $H$ be a quasiconvex subgroup of a one-ended hyperbolic group $G$. Then there is an algorithm which, given $x \in G$, will decide if $x \in A_{r,R,K}(H)$.
\end{proposition}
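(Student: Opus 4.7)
The plan is to decide membership in $A_{r,R,K}(H)$ in two stages. First we check whether $x \in N_{r,R}(H)$ by computing $d(x, H)$; if $d(x,H) \notin [r,R]$ we output ``no''. Otherwise, we must determine whether the connected component of $x$ in $N_{r,R}(H)$ contains a vertex of $C_K(H)$, and for this we use a finite $H$-cover together with the adjacency digraph machinery developed in the preceding subsection.

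For stage one, observe that membership in $H$ is decidable: by Lemma~\ref{lem:qi-embed} the inclusion $H \hookrightarrow G$ is an explicitly computable $(\lambda,\varepsilon)$-quasi-isometric embedding, so an element $g \in G$ belongs to $H$ if and only if it can be expressed by a word of length at most $\lambda\,|g|_G + \varepsilon$ in the generators of $H$, which is a finite check via the word problem of $G$. Since $1 \in H$, we have $d(x,H) \leq d(x,1)$, so enumerating the (finite) ball $B_{d(x,1)}(x)$ and applying the membership test to each vertex yields $d(x,H)$ and, in particular, a nearest $H$-point $h_x$.

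For stage two, assume $d(x,H) \in [r,R]$. Set
\[
  F := \{\, y \in N_{r,R}(H) : d(y,1) \leq R \,\},
\]
a finite, computable subgraph of $\Gamma$. Every $y \in N_{r,R}(H)$ is within distance $R$ of some $h \in H$, so translating by $h^{-1}$ shows $HF = N_{r,R}(H)$; thus $F$ is a finite $H$-cover. Construct the adjacency digraph $\Delta_F$: the label set $S_F$ is contained in $H \cap B_{2R}(1)$, which is finite and computable by enumerating $B_{2R}(1)$ and filtering by the membership test, and the edge relations of $\Delta_F$ are then read off by checking intersections of components of $F$ with their $S_F$-translates inside the finite graph $F \cup S_F \cdot F$. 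Using $h_x$ from stage one, put $h = h_x$, note that $h^{-1}x \in F$, and identify the component $v \in \pi_0(F)$ containing $h^{-1}x$.

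To conclude, by Lemma~\ref{lem:adj-digraph} a vertex $h' v' \in HF$ lies in the connected component of $N_{r,R}(H)$ containing $h^{-1}x$ if and only if $v'$ and $v$ lie in a common connected component of $\Delta_F$. Since $C_K(H)$ is $H$-invariant, the component of $N_{r,R}(H)$ containing $x$ meets $C_K(H)$ if and only if some $v' \in \pi_0(F)$ lying in the same $\Delta_F$-component as $v$ contains a vertex at distance exactly $K$ from $H$, and this last condition is a finite check over the (finitely many) vertices of $F$ using stage one. The main obstacle is really bookkeeping: verifying that $F$ is indeed an $H$-cover and that $\Delta_F$ faithfully records the connectivity of $N_{r,R}(H)$, both of which follow directly from the construction and the earlier adjacency-digraph lemmas.
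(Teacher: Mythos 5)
Your proof is correct and takes essentially the same approach as the paper: both reduce the question to deciding membership in $N_{r,R}(H)$ and $C_K(H)$ (which follows from decidability of membership in $H$), both compute a finite $H$-cover $F$ of $N_{r,R}(H)$, build the adjacency digraph $\Delta_F$, mark the components of $F$ meeting $C_K(H)$, and check whether the relevant component of $\pi_0(F)$ lies in a $\Delta_F$-component with a marked vertex. The only cosmetic difference is that you centre the finite cover $F = N_{r,R}(H)\cap B_R(1)$ at the identity and translate $x$ into it by a nearest-point projection $h_x$, whereas the paper centres $F$ at $x$ directly; both choices yield a valid finite $H$-cover and the $H$-invariance of $C_K(H)$ makes the translation harmless.
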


\begin{proof}
First, we show that one can decide membership of $N_{r,R}(H)$. Given $x \in G$, compute the finite balls $U_1 = B_r(x)$, $U_2 = B_R(x)$ in $\Gamma$. Then $x \in N_{r,R}(H)$ if and only if $U_1 \cap H = \emptyset$ and $U_2 \cap H \neq \emptyset$. This is decidable, since membership of $H$ is decidable. Similarly, we can decide membership of $C_K(H)$. If $x$ is not in $N_{r,R}(H)$, then terminate and return `no'.

Now, note that $H$ acts cocompactly on $N_{r,R}(H)$, so compute a finite $H$-cover $F$ containing $x$. This can be achieved by, for example, letting $l$ be the length of the longest given generator of $H$, and choosing $F = N_{r,R}(H) \cap B_{2l+R}(x)$. Let $v_x \in \pi_0 (F)$ be the component of $F$ containing $x$. We now form the adjacency digraph $\Delta_F$, and mark the vertices $v \in \pi_0 (F)$ which intersect $C_K(H)$ in $\Gamma$. One can then check that $x \in A_{r,R,K}(H)$ if and only if $v_x$ lies in a connected component of $\Delta_F$ containing a marked vertex. 
\end{proof}

We now have the following algorithms, which will allow us to distinguish (filtered) ends from one another. 

\begin{proposition}
There is an algorithm which, given $x, y \in A_{r,R,K}(H)$, will decide if $x$ and $y$ lie in distinct $H$-orbits of connected components of $A_{r,R,K}(H)$. 
\end{proposition}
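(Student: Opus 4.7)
The plan is to reduce the question to a connectivity check in the finite adjacency digraph $\Delta_F$ for a suitable finite $H$-cover $F$ of $A_{r,R,K}(H)$ containing both $x$ and $y$. Since $H$ acts cocompactly on $A_{r,R,K}(H)$, such an $F$ can be computed effectively. Following the recipe in the proof of Proposition~\ref{prop:decide-arrk}, let $l$ be the maximum length in $G$ of a generator of $H$ and take
$$
F \ = \ A_{r,R,K}(H) \, \cap \, \bigl(B_{2l+R}(x) \cup B_{2l+R}(y)\bigr),
$$
which is finite and explicitly enumerable because membership in $A_{r,R,K}(H)$ is decidable by Proposition~\ref{prop:decide-arrk}. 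By construction $x, y \in F$.

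From $F$ I then construct the adjacency digraph $\Delta_F$. This is again routine: the vertex set $\pi_0(F)$ is computed directly from $F$, the adjacency set $S_F$ is determined by checking which elements $u(u')^{-1}$ for $u, u' \in VF$ lie in $H$ (decidable for quasiconvex subgroups of hyperbolic groups), and the edges follow from finitely many intersection tests between vertices and their $s$-translates.

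Let $v_x, v_y \in V\Delta_F$ be the components of $F$ containing $x$ and $y$ respectively. Following the argument in the proof of Lemma~\ref{lem:comp-is-coends}, Lemma~\ref{lem:adj-digraph} tells us that $v_x$ and $v_y$ lie in the same connected component of $\Delta_F$ if and only if there exists $h \in H$ such that $v_x$ and $hv_y$ lie in the same connected component of $HF = A_{r,R,K}(H)$. Since $x \in v_x$ and $y \in v_y$, this is exactly the assertion that $x$ and $y$ lie in the same $H$-orbit of connected components of $A_{r,R,K}(H)$. Testing whether $v_x$ and $v_y$ are in the same component of the finite digraph $\Delta_F$ is then a routine breadth-first search, completing the algorithm.

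The only real subtlety is verifying that the particular $F$ constructed above is a finite $H$-cover of \emph{all} of $A_{r,R,K}(H)$, rather than just a finite fragment near $x$ and $y$; without this, the global correspondence between connected components of $\Delta_F$ and $H$-orbits of components of $HF$ would break down. This rests on the cocompactness of the $H$-action on $N_{r,R}(H)$, which ensures that a ball of radius $2l + R$ around any single vertex of $A_{r,R,K}(H)$ meets every $H$-orbit of vertices in $A_{r,R,K}(H)$; this is precisely the analogous step already justified in the proof of Proposition~\ref{prop:decide-arrk}.
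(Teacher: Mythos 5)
Your proof is correct and takes essentially the same route as the paper: compute a finite $H$-cover $F$ of $A_{r,R,K}(H)$ using the recipe from Proposition~\ref{prop:decide-arrk}, form the adjacency digraph $\Delta_F$, and test whether $v_x$ and $v_y$ lie in the same component of $\Delta_F$, justified by Lemmas~\ref{lem:adj-digraph} and~\ref{lem:comp-is-coends}. You simply spell out the details (the explicit choice of $F$, the computation of $S_F$, and the translation from $\Delta_F$-connectivity to $H$-orbits of components) that the paper leaves implicit.
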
 

\begin{proof}
Use Proposition~\ref{prop:decide-arrk} to compute a finite $H$-cover $F$ of $A_{r,R,K}(H)$, and let $v_x, v_y \in \pi_0(F)$ be such that $x \in v_x$, $y \in v_y$. 
Then, as remarked in the proof of Lemma~\ref{lem:comp-is-coends}, we need only form the adjacency digraph $\Delta_F$ and check whether $v_x$ and $v_y$ lie in the same connected component of $\Delta_F$. 
\end{proof}

\begin{proposition}\label{prop:distinct-arrk-gwp}
There is an algorithm which, given $x, y \in A_{r,R,K}(H)$ and a solution to the generalised word problem for $H$, will decide if $x$ and $y$ lie in distinct connected components of $A_{r,R,K}(H)$. 
\end{proposition}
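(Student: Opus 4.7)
The plan is to reduce this to the generalised word problem for $H$ via the adjacency digraph and Lemma~\ref{lem:adj-digraph}. First I would use Proposition~\ref{prop:decide-arrk} to compute a finite $H$-cover $F$ of $A_{r,R,K}(H)$ which contains both $x$ and $y$ (e.g.\ by taking $F = A_{r,R,K}(H) \cap B_{2l+R}(\{x,y\})$ where $l$ is the length of the longest given generator of $H$), and identify the (unique) components $v_x, v_y \in \pi_0(F)$ containing $x$ and $y$ respectively. Then I would form the adjacency digraph $\Delta_F$.

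The key observation is that $x$ and $y$ lie in the same connected component of $A_{r,R,K}(H) = HF$ if and only if $v_x = 1\cdot v_x$ and $v_y = 1 \cdot v_y$ lie in the same component of $HF$. By Lemma~\ref{lem:adj-digraph}, this happens precisely when $1 \in \pi(\mathcal{L}(\Delta_F, v_x, v_y))$, i.e.\ when the identity element of $H$ is labelled by some path in $\Delta_F$ from $v_x$ to $v_y$.

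Next I would invoke Lemma~\ref{lem:image-of-language} to compute a finite generating set for the subgroup $K_{v_x} \leq H$ and the finite set $T_{v_x,v_y} \subset H$ such that
$$
\pi(\mathcal L(\Delta_F, v_x, v_y)) = \bigcup_{t \in T_{v_x,v_y}} K_{v_x} t.
$$
Thus deciding whether $1$ lies in this union amounts to deciding, for each $t \in T_{v_x,v_y}$, whether $t^{-1} \in K_{v_x}$. Since we are given generators of $H$ and have computed generators of $K_{v_x}$ as words in those generators, and since each $t \in T_{v_x,v_y}$ is also expressible as a word in the generators of $H$, each of these queries is an instance of the generalised word problem for $H$. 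Running the given solution to the GWP finitely many times then decides whether $x$ and $y$ lie in the same connected component of $A_{r,R,K}(H)$.

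The main conceptual point is the reduction to the GWP; there is no real obstacle beyond verifying that the data produced by Lemmas~\ref{lem:adj-digraph} and \ref{lem:image-of-language} is exactly of the form required as input to the GWP, which is essentially immediate from their statements.
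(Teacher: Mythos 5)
Your proof is correct and follows essentially the same route as the paper: compute a finite $H$-cover $F$ containing $x$ and $y$, reduce via Lemma~\ref{lem:adj-digraph} to deciding whether $1 \in \pi(\mathcal L(\Delta_F, v_x, v_y))$, then use Lemma~\ref{lem:image-of-language} to reduce that to finitely many coset membership queries, each an instance of the generalised word problem for $H$. Your checking of $t^{-1} \in K_{v_x}$ is the same condition as the paper's $T_{v_x,v_y} \cap K_{v_x} \neq \emptyset$, since $K_{v_x}$ is a subgroup.
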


\begin{proof}
Find a finite $H$-cover $F$ for $A_{r,R,K}(H)$ containing both $x$ and $y$. Let $v_x, v_y \in \pi_0 (F)$ be the components of $F$ containing $x$ and $y$ respectively. By Lemma~\ref{lem:adj-digraph}, we have that $x$ and $y$ are contained in the same component if and only if $1 \in \pi(\mathcal L(\Delta_F, v_x, v_y))$. Form the subgroup $K_{v_x}$ and set of words $T_{v_x,v_y}$ as in Lemma~\ref{lem:image-of-language}, then $x$ and $y$ lie in different components if and only if $T_{v_x,v_y} \cap K_{v_x}$ is empty. Using our given solution to the generalised word problem in $H$, this is decidable. 
\end{proof}

We now turn to reproving Proposition~\ref{prop:distinct-arrk-gwp}, but we drop the hypothesis that $H$ has a solvable generalised word problem and replace it with the condition that $H$ has finitely many filtered ends in $G$. 
The key observation that makes this problem tractable in this case is the following. 





\begin{proposition}\label{prop:distinct-arrk-finite}
If $A_{r,R,K}(H)$ has finitely many components, then
there is an algorithm which, given $x, y \in A_{r,R,K}(H)$, will decide if $x$ and $y$ lie in distinct connected components of $A_{r,R,K}(H)$. 
\end{proposition}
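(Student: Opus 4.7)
The plan is to reduce the connectivity question to a finite collection of subgroup-membership tests in a particular finite-index subgroup of $H$, and then resolve those tests via Kapovich's algorithm.

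First I would use Proposition~\ref{prop:decide-arrk} to compute a finite $H$-cover $F$ of $A_{r,R,K}(H)$ containing $x$ and $y$ as vertices, form the adjacency digraph $\Delta_F$, and let $v_x,v_y\in\pi_0(F)$ denote the components containing $x$ and $y$ respectively. Applying Lemma~\ref{lem:adj-digraph} with $h=1$, the points $x$ and $y$ lie in the same component of $A_{r,R,K}(H)=HF$ if and only if $1\in\pi(\mathcal L(\Delta_F,v_x,v_y))$. By Lemma~\ref{lem:image-of-language} this set equals $\bigcup_{t\in T_{v_x,v_y}}K_{v_x}t$ for a computable finite set $T_{v_x,v_y}\subset H$ and a finitely generated subgroup $K_{v_x}\leq H$ whose generators are also computable. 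The question therefore reduces to deciding, for each of the finitely many $t\in T_{v_x,v_y}$, whether $t^{-1}\in K_{v_x}$.

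Next I would invoke the hypothesis: by Lemma~\ref{lem:fin-components} applied to $F$, the finiteness of $\pi_0(A_{r,R,K}(H))$ forces $K_{v_x}$ to have finite index in $H$. Any finite-index subgroup is coarsely dense and hence quasiconvex, so by Lemma~\ref{lem:qi-embed} the inclusion $K_{v_x}\into H$ is a quasi-isometric embedding; composing with the quasi-isometric embedding $H\into G$ shows that $K_{v_x}$ is in fact quasiconvex in $G$. Kapovich's algorithm \cite{kapovich1996detecting}, applied to $G$ together with the computed generators of $K_{v_x}$, therefore terminates and outputs a quasiconvexity constant $Q'$ for $K_{v_x}\leq G$. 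From $Q'$ and the solvable word problem in $G$, membership in $K_{v_x}$ is decided by enumerating words in its generators of length bounded in terms of the resulting quasi-isometry constants and $|\cdot|_G$. Running this test for each $t^{-1}$ completes the algorithm.

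The main obstacle is the passage from the abstract finiteness hypothesis to an effective membership procedure in $K_{v_x}$. I would handle it precisely by combining Lemma~\ref{lem:fin-components}, which converts the hypothesis into a finite-index statement, with the transitivity of quasi-isometric embeddings, which promotes the finite-index datum inside $H$ to full quasiconvexity inside the ambient hyperbolic group $G$, where Kapovich's classical membership procedure then applies.
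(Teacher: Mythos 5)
Your proposal is correct and follows essentially the same route as the paper: reduce via $\Delta_F$, Lemma~\ref{lem:adj-digraph} and Lemma~\ref{lem:image-of-language} to a membership test in $K_{v_x}$, then use Lemma~\ref{lem:fin-components} to conclude $K_{v_x}$ has finite index and is therefore quasiconvex, making membership decidable. The only cosmetic difference is that you promote quasiconvexity of $K_{v_x}$ in $H$ to quasiconvexity in $G$ and invoke Kapovich's algorithm directly, whereas the paper simply notes quasiconvexity in the hyperbolic group $H$ and defers to the procedure of Proposition~\ref{prop:distinct-arrk-gwp}; both are equivalent routes to the same decision procedure.
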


\begin{proof}
It is known by Lemma~\ref{lem:fin-components} that, for every $v \in \pi_0 (F)$, the subgroup $K_v\leq H$ constructed in Lemma~\ref{lem:image-of-language} has finite index in $H$. In particular, $K_v$ is quasiconvex in $H$, and so we can decide membership of $K_v$. The algorithm then proceeds exactly as in Proposition~\ref{prop:distinct-arrk-gwp}. 
\end{proof}

\subsection{Counting (filtered) ends of pairs}\label{sec:counting-ends}

We conclude this section by giving algorithms to compute $e(G,H)$ and $\tilde e(G,H)$. Fix constants $0\leq r \leq K \leq R < \infty$ such that Proposition~\ref{prop:arrk-comp-corr} is satisfied. 
Let $F$ be a finite $H$-cover for $A_{r,R,K}(H)$, and form the adjacency digraph $\Delta_F$. Note that the construction of this digraph is completely effective. We immediately have the following new algorithm for computing $e(G,H)$, which was first shown to be computable by Vonseel \cite{vonseel2018ends}. 

\begin{theorem}\label{thm:decide-coends-scott}
There is an algorithm which, upon input of a one-ended hyperbolic group $G$ and generators of a quasiconvex subgroup $H$, will output $e(G,H)$.
\end{theorem}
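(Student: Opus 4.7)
The plan is to reduce the computation of $e(G,H)$ to counting connected components of a finite digraph, by chaining together the equivalences already established in the paper. By Theorem~\ref{thm:hyp-endsofpairs}, $e(G,H)$ equals the number of $H$-orbits of connected components of $\partial G - \Lambda H$. Proposition~\ref{prop:arrk-comp-corr} (for suitably chosen $r \leq K \leq R$) identifies this with the number of $H$-orbits of components of $A_{r,R,K}(H)$, equivariantly with respect to the $H$-action. Finally, Lemma~\ref{lem:comp-is-coends} says that for any finite $H$-cover $F$ of $A_{r,R,K}(H)$, this same count is realised as the number of connected components of the adjacency digraph $\Delta_F$. So it suffices to produce $F$ and $\Delta_F$ effectively.

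First I would compute, from the input, all the constants needed to invoke Proposition~\ref{prop:arrk-comp-corr}: the hyperbolicity constant $\delta$ via Papasoglu's algorithm, the quasiconvexity constant $Q$ and the quasi-isometric embedding constants $(\lambda,\varepsilon)$ via Lemma~\ref{lem:qi-embed}, the constant $n$ for $\ddagger_n$ via Dahmani--Groves, the constant $\eta$ from Lemma~\ref{lem:eta}, and visual metric parameters $a, k_1, k_2$ (built from $\delta$ as in \cite{bridson2013metric}). Plugging these into the explicit lower bounds in Lemmas~\ref{lem:shadow-covers}--\ref{lem:projecting-paths} and the final lemma of Section 1 yields explicit valid choices of $r$, $K$, $R$.

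Next I would construct a finite $H$-cover $F$ of $A_{r,R,K}(H)$. Let $l$ be the maximum $G$-length of a given generator of $H$; then setting
$$
F \; = \; A_{r,R,K}(H) \cap B_{2l+R}(1)
$$
gives a finite subgraph that one can enumerate effectively via Proposition~\ref{prop:decide-arrk}. One checks $HF = A_{r,R,K}(H)$: any vertex $x \in A_{r,R,K}(H) \subset N_R(H)$ has some $h \in H$ with $d(x,h) \leq R$, whence $h^{-1}x \in B_R(1) \cap A_{r,R,K}(H) \subset F$, and edges are captured by the extra $2l$ slack. Having $F$ in hand, the adjacency set $S_F \subset H$ is finite and algorithmically computable: we enumerate elements of $G$ of length at most $2(2l+R)$, use the solvable word problem in $G$ and Lemma~\ref{lem:qi-embed} (which lets us decide membership in the quasiconvex $H$) to single out those lying in $H$, and retain those $s$ with $sF \cap F \neq \emptyset$. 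We then form $\Delta_F$ by joining components $v,v' \in \pi_0(F)$ with an $s$-edge precisely when $v \cap sv' \neq \emptyset$.

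Finally I would run any standard connectivity algorithm on the finite digraph $\Delta_F$ and output the resulting component count; by the chain of identifications above this is exactly $e(G,H)$. The main potential obstacle is purely bookkeeping: making sure that the bounds defining $r,R,K$ and the cover radius $2l+R$ can be produced effectively from the given data. Since every ingredient (hyperbolicity, quasiconvexity, $\ddagger_n$, visual constants, generator lengths) is algorithmically computable, no genuine difficulty arises, and correctness is immediate from Theorem~\ref{thm:hyp-endsofpairs}, Proposition~\ref{prop:arrk-comp-corr}, and Lemma~\ref{lem:comp-is-coends}.
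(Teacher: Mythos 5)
Your proposal follows exactly the route the paper takes: the proof in the paper is simply ``count connected components of $\Delta_F$,'' justified by Lemma~\ref{lem:comp-is-coends} and Theorem~\ref{thm:hyp-endsofpairs} (with the $H$-equivariant bijection of Proposition~\ref{prop:arrk-comp-corr} already fixed in the preamble to the subsection). Your write-up just fills in the bookkeeping the paper leaves implicit --- computability of $\delta$, $Q$, $(\lambda,\varepsilon)$, $n$, $\eta$, the visual metric parameters, the explicit $H$-cover $F$, and the adjacency set $S_F$ --- all of which is correct and matches the constructions already given in Propositions~\ref{prop:decide-arrk} and the surrounding text.
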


\begin{proof}
One simply counts connected components of $\Delta_F$.
The construction of $\Delta_F$ is completely effective, so the result follows immediately from Lemma~\ref{lem:comp-is-coends} and Theorem~\ref{thm:hyp-endsofpairs}.
\end{proof}

Computing $\tilde e(G,H)$ poses more problems. In particular, we need to somehow be able to decide if $K_v$ has finite index in $H$, which can be seen to be undecidable for an arbitrary choice of $H$ via the Rips Construction (see e.g. \cite{baumslag1994unsolvable}). Thus, in light of Lemma~\ref{lem:fin-components} we cannot expect to be able to decide if $\tilde e(G,H) = \infty$ without either adding further hypotheses to $H$ (e.g. this is decidable if $H$ is free \cite{kapovich2002stallings}), or somehow further controlling the structure of $A_{r,R,K}(H)$. It is not clear whether the latter of these is even possible, which suggests this problem may be undecidable for arbitrary choices of quasiconvex $H \leq G$. We can however at least give the following two algorithms.

\begin{theorem}\label{thm:decide-limsetcomp-discon}
There is an algorithm which, upon input of a one-ended hyperbolic group $G$, generators of a quasiconvex subgroup $H$, a solution to the generalised word problem for $H$, and an integer $N \geq 0$, will decide whether $\tilde e(G,H) \geq N$. In particular, we can decide if $\partial G - \Lambda H$ is connected. 
\end{theorem}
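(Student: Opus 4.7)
The plan is to reduce the problem to a finite collection of questions about indices of finitely generated subgroups of $H$, each of which can be resolved using the generalised word problem.

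First I would compute constants $r \leq K \leq R$ as in Proposition~\ref{prop:arrk-comp-corr}, a finite $H$-cover $F$ of $A_{r,R,K}(H)$, and the adjacency digraph $\Delta_F$. Combining Theorem~\ref{thm:hyp-filteredends} with Proposition~\ref{prop:arrk-comp-corr} identifies $\tilde e(G,H)$ with the number of connected components of $A_{r,R,K}(H)$, while Lemma~\ref{lem:comp-is-coends} identifies the connected components of $\Delta_F$ with the $H$-orbits of components of $A_{r,R,K}(H)$. I would then pick representatives $v_1, \ldots, v_c \in \pi_0(F)$, one per component of $\Delta_F$, and compute generators of the subgroups $K_{v_1}, \ldots, K_{v_c} \leq H$ provided by Lemma~\ref{lem:image-of-language}.

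The key structural observation is that $K_{v_i}$ is precisely the stabiliser in $H$ of the component $U_i$ of $A_{r,R,K}(H)$ containing $v_i$. Indeed, by Lemma~\ref{lem:adj-digraph} an element $h \in H$ satisfies $hv_i \in U_i$ if and only if $h \in \pi(\mathcal L(\Delta_F, v_i, v_i)) = K_{v_i}$, and since components of $A_{r,R,K}(H)$ are pairwise disjoint, $hU_i = U_i$ precisely when $hv_i \in U_i$. Hence the $H$-orbit of $U_i$ contains exactly $|H : K_{v_i}|$ components, and so
\[
    \tilde e(G,H) \; = \; \sum_{i=1}^{c} \, |H : K_{v_i}| .
\]
Deciding $\tilde e(G,H) \geq N$ therefore reduces to computing each $\min(|H : K_{v_i}|, N)$ and comparing the sum of these values to $N$.

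The main obstacle is the following subproblem: given a finitely generated subgroup $L \leq H$ with explicit generators and an integer $M \geq 1$, decide whether $|H : L| \geq M$. I would solve this by running two semi-algorithms in parallel. The first enumerates $M$-tuples $(h_1, \ldots, h_M)$ of elements of $H$ and uses the generalised word problem to test whether $h_i^{-1}h_j \notin L$ for all $i \neq j$; any such tuple certifies $|H : L| \geq M$. The second enumerates finite subsets $T \subset H$ of size strictly less than $M$ with $1 \in T$, and checks via the generalised word problem that for every generator $y$ of $H$ and every $t \in T$ there is some $t' \in T$ with $(t')^{-1}yt \in L$; this finite closure condition certifies that $H = \bigcup_{t \in T} tL$, whence $|H : L| < M$. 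Exactly one of these semi-algorithms must terminate, and its output resolves the question. Applying this subroutine to each $K_{v_i}$ yields the full decision procedure. The final clause of the theorem follows by running the algorithm with $N = 2$, since $\partial G - \Lambda H$ is connected precisely when $\tilde e(G,H) \leq 1$.
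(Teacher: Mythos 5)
Your proof is correct, but it takes a genuinely different route from the paper. The paper's argument never isolates the stabilisers $K_{v_i}$; instead it builds an exhausting sequence of finite $H$-covers $F_0 \subset F_1 \subset \cdots$ with $F_{i+1} = YF_i \cup F_i$, counts for each $i$ the number $N_i$ of components of $A_{r,R,K}(H)$ meeting $F_i$ via Proposition~\ref{prop:distinct-arrk-gwp}, and halts as soon as $N_i \geq N$ or as soon as $N_{i+1} = N_i$ (at which point the set of components met has saturated, since $Y$ generates $H$). Your approach instead passes through the structural identity $\tilde e(G,H) = \sum_{i=1}^{c} |H : K_{v_i}|$, correctly justified by combining Lemma~\ref{lem:adj-digraph}, the identification $K_{v_i} = \pi(\mathcal L(\Delta_F, v_i, v_i))$ from Lemma~\ref{lem:image-of-language}, and orbit--stabiliser; it then reduces the decision to the independently useful subroutine ``decide $|H:L|\geq M$ given the GWP,'' which you handle by a clean parallel search over distinguishing $M$-tuples on one side and closed coset transversals on the other. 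Both proofs are at bottom parallel semi-decision arguments, but yours gives the sharper byproduct of an explicit index formula for $\tilde e(G,H)$ (and hence an alternative route to Theorem~\ref{thm:decide-filtered-ends-finite} when the index is finite), whereas the paper's version stays closer to the cover-growing machinery already set up and avoids having to single out the subgroups $K_{v_i}$ at all. One minor point worth recording: your second semi-algorithm implicitly needs the generating set of $H$ to be symmetric so that closure under left multiplication by generators propagates to all of $H$ starting from the coset $L$; the paper's $Y$ is symmetric, so this is fine, but it deserves a sentence.
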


\begin{proof}
Compute a finite $H$-cover $F_0$ of $A_{r,R,K}(H)$, then inductively define $F_{i+1} := YF_i \cup F_i$,
where $Y$ is a symmetric generating set for $H$. Thus we have an increasing sequence of $H$-covers $(F_i)$, where each strictly contains the last. For each $i$, let $N_i$ denote the number of components of $A_{r,R,K}(H)$ which intersect $F_i$. This number is computable by Proposition~\ref{prop:distinct-arrk-gwp}, and we can conclude that $e(G,H) \geq N_i$.
If there is some $i$ such that $N_i = N_{i+1}$, then since $Y$ is a generating set, it follows that $N_j = N_i$ for all $j > i$. 
Given $N$ as input, our algorithm will run until $N_i \geq N$ for some $i$, or terminate if the sequence $(N_i)_i$ stabilises. By the above, this will always halt. 
\end{proof}

\begin{theorem}\label{thm:decide-filtered-ends-finite}
There is an algorithm which, upon input of a one-ended hyperbolic group $G$ and generators of a quasiconvex subgroup $H$, will terminate if and only if $\tilde e(G,H)$ is finite. 
Moreover, upon termination it will output the value of $\tilde e(G,H)$. 
\end{theorem}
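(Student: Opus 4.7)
The plan is to lift the procedure of Theorem~\ref{thm:decide-limsetcomp-discon} to the setting without the generalised word problem for $H$, by only requiring a semi-decision (rather than a decision) of the key stabilisation condition. Let $Y$ be a finite symmetric generating set for $H$. Compute a finite $H$-cover $F_0$ of $A_{r,R,K}(H)$ (using Proposition~\ref{prop:decide-arrk}) and inductively define $F_{i+1} = (YF_i \cup F_i) \cap A_{r,R,K}(H)$, writing $\mathcal C_i$ for the set of components of $A_{r,R,K}(H)$ that meet $F_i$. The $Y$-invariance argument from the proof of Theorem~\ref{thm:decide-limsetcomp-discon} gives $\mathcal C_i \subseteq \mathcal C_{i+1}$ and, crucially, that if $\mathcal C_i = \mathcal C_{i+1}$ for a single $i$ then $\mathcal C_j = \mathcal C_i$ for all $j \geq i$; in that case $\tilde e(G,H) = |\mathcal C_i|<\infty$ by Theorem~\ref{thm:hyp-filteredends} and Proposition~\ref{prop:arrk-comp-corr}.

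The first step is to semi-decide the condition $\mathcal C_i = \mathcal C_{i+1}$. This holds precisely when every vertex $x \in F_{i+1} \setminus F_i$ is in the same component of $A_{r,R,K}(H)$ as some vertex of $F_i$. Since the $F_j$ give an exhaustion $\bigcup_j F_j = A_{r,R,K}(H)$, I can semi-decide each such condition by running, for increasing $j$, a breadth-first search in the finite graph $F_j$ to test whether $x$ is connected to $F_i$ inside $F_j$. Dovetailing this across all $i$ and all $x \in F_{i+1}\setminus F_i$, the algorithm semi-decides the disjunction $\exists i : \mathcal C_i = \mathcal C_{i+1}$. When $\tilde e(G,H) = \infty$, the contrapositive of the propagation argument forces $\mathcal C_i \subsetneq \mathcal C_{i+1}$ for every $i$, and the semi-decision never succeeds, so the algorithm runs forever.

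Suppose instead the dovetail succeeds at some index $i_0$. Then $\tilde e(G,H)$ is finite, so Lemma~\ref{lem:fin-components} applied to $F_{i_0}$ ensures that every $K_v$ with $v \in \pi_0(F_{i_0})$ has finite index in $H$. In particular, the hypothesis of Proposition~\ref{prop:distinct-arrk-finite} is satisfied, and one can decide for each pair of representatives drawn from $\pi_0(F_{i_0})$ whether they lie in distinct components of $A_{r,R,K}(H)$. Counting equivalence classes yields $|\mathcal C_{i_0}|$, which by the first paragraph equals $\tilde e(G,H)$, and this is the output.

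The main delicate point is the justification that $\tilde e(G,H)=\infty$ blocks the semi-decision at \emph{every} stage, not merely infinitely often: this is exactly the content of the $Y$-invariance propagation in Theorem~\ref{thm:decide-limsetcomp-discon}, which upgrades a single local equality $\mathcal C_i = \mathcal C_{i+1}$ into a global identification of $\mathcal C_i$ with the full set of components. Everything else reduces to assembling effective routines that have already appeared in the paper.
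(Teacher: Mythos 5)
Your proof is correct and follows essentially the same approach as the paper: build the increasing exhaustion $F_i$ of $A_{r,R,K}(H)$ by repeatedly applying $Y$, semi-decide the stabilisation condition $\mathcal C_i = \mathcal C_{i+1}$ by searching for paths back into $F_i$, use the $Y$-invariance to propagate a single such equality into the conclusion that $F_i$ already meets every component, and finally invoke Proposition~\ref{prop:distinct-arrk-finite} (whose hypothesis is now verified) to count. The only differences are cosmetic — you phrase the search per vertex rather than per component of $YF_i$ and make the dovetailing explicit — and do not change the substance.
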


\begin{proof}
We proceed as before and compute a finite $H$-cover $F_0$ of $A_{r,R,K}(H)$, then inductively define $F_{i+1} := YF_i \cup F_i$,
where $Y$ is a symmetric generating set for $H$. Thus, we have an increasing sequence of $H$-covers
$$
F_0 \subset F_1 \subset F_2 \subset \ldots, 
$$
where each strictly contains the last. Moreover, $\bigcup_i F_i = A_{r,R,K}(H)$. For each $i \geq 0$ we now run the following search. 
For each component of $YF_i$, search for a path through $A_{r,R,K}(H)$ into $F_i$. If this process terminates for a given $i \geq 0$ then it follows from an easy induction argument that there is a path from any point in $A_{r,R,K}(H)=HF_i$ back to $F_i$ travelling through $A_{r,R,K}(H)$. In particular, this means that every connected component of $A_{r,R,K}(H)$ intersects $F_i$. 

Clearly such an $F_i$ exists if and only if $A_{r,R,K}(H)$ has finitely many components, which is equivalent to the condition that $\tilde e(G,H) < \infty$ by Proposition~\ref{prop:arrk-comp-corr} and Theorem~\ref{thm:hyp-filteredends}. 
If we do find such an $F_i$ then to compute the exact value of $\tilde e(G,H)$ we may use Proposition~\ref{prop:distinct-arrk-finite} to decide how many distinct components of $A_{r,R,K}(H)$ intersect $F_i$. By our choice of $F_i$, this will then be precisely the total number of components of $A_{r,R,K}(H)$.
\end{proof}

\section{Searching for Splittings and Crossings}

In this section we apply the above tools to the problem of deciding if a given quasiconvex subgroup is associated with a splitting. In short, we run two searches in parallel -- one search for a splitting and another search for obstructions to splittings.

\subsection{An algorithm to search for splittings}

The first step to searching for splittings over subgroups commensurable with $H$ is to be able to recognise such subgroups. We will achieve this by deciding membership of $\Comm_G(H)$. 

\begin{proposition}\label{prop:decide-fi}
Let $G$ be a hyperbolic group, then there is an algorithm which, on input of generators of a quasiconvex subgroup $H$, will decide if $|G:H| < \infty$.
\end{proposition}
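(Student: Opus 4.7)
The plan is to reduce to the one-ended case and apply Theorem~\ref{thm:decide-coends-scott}. I first use the algorithm of Dahmani--Groves \cite{dahmani2008detecting} to compute the number of ends $e(G)$, partitioning the analysis by cases. If $G$ is finite the answer is immediate, and if $G$ is two-ended (virtually cyclic) then $|G:H|<\infty$ if and only if $H$ is infinite, which is easily checkable from the given generators of $H$.

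If $G$ is one-ended, I invoke Theorem~\ref{thm:decide-coends-scott} to compute $e(G,H)$. By Proposition~\ref{prop:coset-ends}, $e(G,H)$ equals the number of geometric ends of the Schreier graph $H\backslash\Gamma$, and a connected, locally finite graph has zero ends if and only if it is finite. Hence $|G:H|<\infty$ if and only if $e(G,H)=0$, giving an outright decision.

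If $G$ has infinitely many ends, the Dahmani--Groves algorithm produces a maximal splitting of $G$ over finite subgroups, realising $G=\pi_1(\mathcal G)$ for a finite graph of groups $\mathcal G$ with finite edge groups and one-ended (or finite) vertex groups. A standard Bass--Serre analysis of the action of $H$ on the associated tree $T_{\mathcal G}$ reduces the finiteness of $|G:H|$ to cocompactness of this action together with finiteness of $|G_v:H\cap G_v^g|$ in the one-ended vertex groups $G_v$; each such intersection is quasiconvex in the ambient vertex group, so the previous one-ended case handles the sub-problems.

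The hard part will be this last reduction, which requires effectively describing the intersections $H\cap G_v^g$ as finitely generated quasiconvex subgroups and verifying cocompactness of $H\actson T_{\mathcal G}$. An alternative route that bypasses this altogether is to run, in parallel with the above, a coset enumeration built from the decidable generalised word problem for quasiconvex $H$ \cite{kapovich1996detecting}: for increasing $D$ one checks whether $\{Hw:w\in B_D^G(1)\}$ is closed under right multiplication by the generators of $G$, and halts with ``finite index'' as soon as it is. This halts precisely when $|G:H|<\infty$, giving a uniform backstop for the finite-index case regardless of the number of ends.
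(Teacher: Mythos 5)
The paper's proof is considerably simpler and fully uniform: for any hyperbolic $G$, one observes that $|G:H|<\infty$ if and only if $\partial G = \Lambda H$, which by Lemmas~\ref{lem:shadow-covers} and~\ref{lem:shad-nonempty} is equivalent to $A_{r,R,K}(H)=\emptyset$ for the computable constants already in hand; emptiness of $A_{r,R,K}(H)$ is then checked by computing a finite $H$-cover of $N_{r,R}(H)$ and testing whether it meets $C_K(H)$. No case split on $e(G)$ is needed. Your one-ended branch is correct (and, since $e(G,H)=0$ iff $H\backslash\Gamma$ is finite, amounts to the same $A_{r,R,K}(H)=\emptyset$ test run through the heavier machinery of Theorem~\ref{thm:decide-coends-scott}), but the overall argument has a genuine gap in the infinitely-ended case: the Bass--Serre reduction you sketch (computing each $H\cap G_v^g$ as an explicit quasiconvex subgroup of $G_v$, verifying cocompactness of $H\actson T_{\mathcal G}$) is left unproved, and you yourself flag it as ``the hard part,'' while the coset-enumeration backstop you offer as an alternative is only a semi-decision procedure --- it halts precisely when $|G:H|<\infty$, so it never returns ``infinite index.'' Thus when $G$ has infinitely many ends and $|G:H|=\infty$, neither branch of your parallel search terminates. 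The fix is to drop the case analysis entirely and decide $\partial G \stackrel{?}{=} \Lambda H$ directly as the paper does, noting that the relevant lemmas (\ref{lem:shadow-covers},~\ref{lem:shad-nonempty}, and the final lemma identifying $\pi_0(A_{r,R,K})$ with $\pi_0(A_{r,\infty,K})$) do not require one-endedness of $G$, so the emptiness check is valid in every case.
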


\begin{proof}
We have that $|G : H|$ is finite if and only if $\partial G = \Lambda H$. By Proposition~\ref{prop:arrk-comp-corr} this is true if and only if $A_{r,R,K}(H) = \emptyset$ for suitably chosen $r, R, K$. This can be decided by computing a finite $H$-cover $F$ of $N_{r,R}(H)$ as in the proof of Proposition~\ref{prop:decide-arrk} and checking if $F$ intersects $C_K(H)$. 
\end{proof}

\begin{proposition}\label{prop:comm-membership}
Let $G$ be a hyperbolic group. Given generators of a quasiconvex subgroup $H \leq G$, then membership of the commensurator $\Comm_G(H)$ is decidable. 
\end{proposition}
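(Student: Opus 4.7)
\emph{Proof sketch.} Given $g \in G$, we wish to decide whether $g \in \Comm_G(H)$, i.e., whether both $|H : H \cap H^g|$ and $|H^g : H \cap H^g|$ are finite, where $H^g = g^{-1}Hg$. The plan is to reduce this to two applications of Proposition~\ref{prop:decide-fi}, one carried out inside the hyperbolic group $H$ and one inside $H^g$, in both cases with $H \cap H^g$ playing the role of the quasiconvex subgroup.

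Generators for $H^g$ are obtained immediately by conjugating the given generators of $H$ by $g$, and since conjugation is an isometry of $\Gamma$, $H^g$ remains quasiconvex with the same constant as $H$. The substantive step is then producing a finite generating set for $H \cap H^g$. That the intersection of two quasiconvex subgroups of a hyperbolic group is itself quasiconvex is a classical fact, and there are well-known effective procedures — via fibre-product or Stallings-style fold constructions adapted to the hyperbolic setting, combined with explicit bounds on the quasiconvexity constant of the intersection — which produce a finite generating set of $H \cap H^g$ from generators and quasiconvexity constants of the two inputs. Combined with Kapovich's algorithm for computing quasiconvexity constants, this yields the required generating set from the given data.

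Once generators of $H \cap H^g$ are in hand the remainder is immediate: both $H$ and $H^g$ are hyperbolic (being quasiconvex in $G$) and $H \cap H^g$ is quasiconvex in each, so Proposition~\ref{prop:decide-fi} applied twice — once in $H$ and once in $H^g$ — decides both finite-index questions. The algorithm returns ``yes'' precisely when both answers are affirmative. The main technical obstacle is therefore entirely concentrated in invoking the effective construction of generators for $H \cap H^g$; everything else is a short assembly of tools already developed above.
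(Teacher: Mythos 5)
Your proof takes essentially the same route as the paper. The paper's argument is: conjugate the given generators to get generators of $H^g$ (which remains quasiconvex), compute generators of $H\cap H^g$ effectively via Gitik's algorithm for intersections of quasiconvex subgroups \cite{gitik2017intersections}, and then apply Proposition~\ref{prop:decide-fi} with $H$ (resp.\ $H^g$) playing the role of the ambient hyperbolic group to test whether $|H:H\cap H^g|$ and $|H^g:H\cap H^g|$ are finite. This is exactly your plan. The one place you are loose is in the justification of the intersection step: ``fibre-product or Stallings-style fold constructions adapted to the hyperbolic setting'' is a reasonable heuristic description, but the actual tool the paper leans on is Gitik's geodesic-core algorithm, and a blind reader would want that reference rather than the vague phrase. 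Beyond that cosmetic point, your decomposition, the observation that $H$ and $H^g$ are themselves hyperbolic with $H\cap H^g$ quasiconvex in each, and the two applications of Proposition~\ref{prop:decide-fi} all match the paper.
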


\begin{proof}
Let $g \in G$, then since $G$ is hyperbolic we have that $H^g$ is quasiconvex. Moreover, $H \cap H^g$ is quasiconvex and we can compute an explicit generating set for this group via \cite{gitik2017intersections}. We then use Proposition~\ref{prop:decide-fi} to decide if $|H : H \cap H^g|$ and $|H^g : H \cap H^g|$ are finite. This decides whether $g \in \Comm_G(H)$. 
\end{proof}

Note that $\Comm_G(H)$ is itself quasiconvex, and so given a generating set of this subgroup we would have that the membership problem would be decidable via Kapovich's algorithm. However, we are not given generators of $\Comm_G(H)$, but of $H$. So, what the above proposition tells us is that we can decide membership of the commensurator in spite of this problem. 
Applying this observation, we produce the following algorithm which searches for splittings where the edge group is commensurable with $H$. 

\begin{proposition}\label{prop:search-for-splittings}
There is an algorithm which takes in as input a one-ended hyperbolic group $G$ and generators of a quasiconvex subgroup $H$, and terminates if and only if $H$ is associated with a splitting.
\end{proposition}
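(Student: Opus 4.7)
The plan is a parallel semi-decision procedure which enumerates candidate graph-of-groups decompositions of $G$ whose edge groups are commensurable with $H$, and verifies each using standard decidability tools for hyperbolic groups. If $H$ is associated with a splitting, then $G$ admits a non-trivial decomposition as an amalgam or HNN extension with some edge group $K$ commensurable with $H$. Since $H$ is quasiconvex, so is $K$, and by Proposition~\ref{prop:full-vertex-groups} the vertex groups are quasiconvex, hence hyperbolic. The algorithm enumerates finite tuples of words in the generators of $G$ encoding such data: a finite graph $\mathcal G$, finite generating sets $S_v, S_e \subset G$ for each vertex and edge group $A_v := \langle S_v \rangle$, $K_e := \langle S_e \rangle$, and a stable letter $t_e \in G$ for each loop edge.

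For each candidate, we run the following checks in parallel by dovetailing. Run Kapovich's algorithm on each $A_v$ and each $K_e$; this terminates precisely when all of these subgroups are quasiconvex, producing both quasiconvexity constants and finite presentations. Assemble these into a finite presentation of $\Gamma := \pi_1(\mathcal G)$ together with the natural homomorphism $\phi : \Gamma \to G$ sending each generator to the corresponding word in $G$. Verify that $\phi$ is well-defined (a finite check on the relators of $\Gamma$) and that the edge maps embed $K_e$ into the adjacent vertex groups (membership in quasiconvex subgroups is decidable). Apply Papasoglu's algorithm to $\Gamma$ to semi-detect hyperbolicity, and then apply the isomorphism algorithm of Sela \cite{sela1995isomorphism} and Dahmani--Groves \cite{dahmani2011isomorphism} to test whether $\Gamma \cong G$. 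In parallel, verify that $\phi$ is surjective by enumerating words in the generators of $\Gamma$ and using the solvable word problem in $G$ to witness each generator of $G$ as lying in the image. Since every hyperbolic group is Hopfian, if $\Gamma \cong G$ and $\phi$ is surjective then $\phi$ is an isomorphism, so $\mathcal G$ genuinely realises $G$ as the fundamental group of a graph of groups. Finally, check that some edge group $K_e$ is commensurable with $H$: verify $K_e \leq \Comm_G(H)$ by applying Proposition~\ref{prop:comm-membership} to each generator of $K_e$, compute generators of $K_e \cap H$ via Gitik's intersection algorithm \cite{gitik2017intersections}, and use Proposition~\ref{prop:decide-fi} to confirm $[H : K_e \cap H] < \infty$. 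Combined with $K_e \leq \Comm_G(H)$ and Theorem~\ref{thm:finite-index-comm}, this yields commensurability.

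If every check succeeds for some candidate, the algorithm halts and outputs the splitting. For the forward direction, if $H$ is associated with a splitting then the data of a genuine decomposition eventually appears in the enumeration and each semi-decidable check terminates successfully, so the algorithm halts. For the reverse direction, every individual check is sound, so termination certifies that $G$ splits over a subgroup commensurable with $H$. The principal obstacle in the argument is verifying injectivity of the candidate homomorphism $\phi : \Gamma \to G$: inspecting $\ker \phi$ directly seems intractable, and we sidestep it by reducing to an abstract isomorphism test between $\Gamma$ and $G$ and invoking the Hopf property, which is the crucial conceptual step.
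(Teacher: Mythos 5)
Your proposal is correct in spirit but takes a genuinely different and heavier route than the paper, and it has one small but genuine gap. The paper's argument is much simpler: it enumerates presentations of $G$ via Tietze transformations of the input presentation, so every presentation encountered automatically presents $G$ and no isomorphism verification is needed. When a presentation has the syntactic shape of a single-edge amalgam or HNN extension, Kapovich's algorithm is run on the displayed edge group $H'$ to semi-detect quasiconvexity, and commensurability with $H$ is certified via the check $H' \leq \Comm_G(H)$ and $H \leq \Comm_G(H')$ (equivalent to yours). Nothing is lost by restricting to single-edge decompositions, since any splitting collapses to one with finitely presented vertex and edge groups. Your route enumerates raw graph-of-groups data and therefore must verify that $\phi : \pi_1(\mathcal G) \to G$ is an isomorphism; your solution --- Papasoglu, plus Sela/Dahmani--Groves, plus a surjectivity check, plus the Hopf property --- is sound, and the Hopfian trick is a nice observation, but it imports the full solution of the isomorphism problem for hyperbolic groups where a Tietze enumeration would have sufficed. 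The genuine gap is that you never verify non-degeneracy of the decomposition: the trivial decomposition $G = H \ast_H G$ passes every one of your checks ($\pi_1(\mathcal G) \cong G$, $\phi$ surjective, $K_e = H$ commensurable with $H$), yet its Bass--Serre tree has a global fixed point and witnesses no splitting at all. You must additionally check that each $K_e$ is a proper subgroup of its adjacent vertex groups, which is decidable here because everything in sight is quasiconvex and so has solvable membership problem. A smaller point: you should also fix the edge monomorphisms of $\mathcal G$ explicitly (the natural inclusions, twisted by the stable letters on loop edges) so that $\phi$ is actually determined by your enumerated data.
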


\begin{proof}
Enumerate presentations of $G$ via Tietze transformations. If a given presentation has the general form of an amalgam or HNN extension, run Kapovich's algorithm \cite{kapovich1996detecting} on the generators of the edge group, which terminates if and only if this subgroup is quasiconvex and outputs a quasiconvexity constant $Q$ if it does terminate. This procedure enumerates splittings of $G$ over quasiconvex subgroups. 

Given a particular quasiconvex splitting of $G$, say over $H'$, we can decide if $H'$ is commensurable with $H$ as follows. Using Proposition~\ref{prop:comm-membership} we decide if $H' \leq \Comm_G(H)$ and $H \leq \Comm_G(H')$. It is easy to check these two relations hold if and only if $H$ is commensurable with $H'$. This completes the algorithm. 
\end{proof}

\subsection{An algorithm to search for crossings}

Recall Proposition~\ref{prop:crossings-in-boundary}, which characterised crossings via intersections of shadows in $\partial G$. We now characterise these intersections via local geometry, and present an algorithm which terminates if and only if such a crossing exists. 
We first need the following technical lemma.

\begin{lemma}\label{lem:char-crossings}
Let $U_1$, $U_2$ be unions of connected components of $A_{r ,\infty, K}(H)$. 
The intersection $\shad U_1 \cap \shad (gU_2)$ is non-empty if and only if there exists some $x \in U_1 \cap gU_2$ such that 
$$
d(x, H) > K \ \ \ \ \textrm{and} \ \ \ \  d(x, gH) > K + 5\delta + |g|.
$$
\end{lemma}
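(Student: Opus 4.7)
The plan is to mimic the argument of Lemma \ref{lem:shad-nonempty}, handling $H$ and the coset $gH$ simultaneously. The extra $|g|$ summand in the distance bound will arise from the need to control a geodesic from the basepoint $1$ to a point of $gH$, even though $1 \notin gH$ in general.

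The $(\Rightarrow)$ direction is straightforward. If $p \in \shad U_1 \cap \shad(gU_2)$, I would take a ray $\gamma$ from $1$ to $p$; interpreting the second shadow via basepoint $1$ (enlarging $r$ as needed, in line with the remark before Proposition \ref{prop:crossings-in-boundary}), this single ray is eventually in $U_1 \cap gU_2$. By Lemma \ref{lem:liminf}, applied to $H$ and to $gH$ separately, both $d(\gamma(t), H)$ and $d(\gamma(t), gH)$ tend to infinity, so $x := \gamma(t)$ satisfies every requirement for $t$ sufficiently large.

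For the $(\Leftarrow)$ direction, given such an $x$, I would use Lemma \ref{lem:visibility} to find a geodesic ray $\gamma$ from $1$ passing within $C = 3\delta$ of $x$, with $d(\gamma(t_0), x) \leq C$. The short geodesic from $x$ to $\gamma(t_0)$ lies in both $N_{r,\infty}(H)$ and $N_{r,\infty}(gH)$ by the distance hypotheses (every point on it is more than $K - C > r$ from either set), so $\gamma(t_0)$ lies in the same components as $x$, namely $U_1$ and $gU_2$. It then suffices to show $\gamma(t') \in U_1 \cap gU_2$ for all $t' \geq t_0$, from which $\gamma(\infty) \in \shad U_1 \cap \shad(gU_2)$ follows.

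The $U_1$ half is essentially Lemma \ref{lem:shad-nonempty}; the main obstacle is the analogous statement for $gU_2$. Suppose for contradiction that $d(\gamma(t'), gH) \leq r$ for some $t' \geq t_0$, with nearest projection $z \in gH$. The $\delta$-slim triangle $[1, z, \gamma(t')]$ puts $\gamma(t_0)$ within $\delta$ of either $[z, \gamma(t')]$ (yielding $d(\gamma(t_0), gH) \leq \delta + r$) or of $[1, z]$. To handle the latter I would introduce a second slim triangle $[1, g, z]$: every point of $[1, z]$ is within $\delta$ of either $[1, g]$ (hence within $\delta + |g|$ of $g \in gH$) or of $[g, z]$ (hence within $\delta + Q$ of $gH$, using that the coset $gH$ is $Q$-quasiconvex as an isometric translate of $H$). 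Combining the two slim-triangle bounds with $d(\gamma(t_0), x) \leq C$ gives $d(x, gH) \leq \max(r + 4\delta, 5\delta + Q, 5\delta + |g|) \leq K + 5\delta + |g|$, since our choice of $K$ exceeds $r + Q$. This contradicts the hypothesis. The auxiliary triangle $[1, g, z]$ is the new geometric input over Lemma \ref{lem:shad-nonempty}, and it accounts precisely for the $|g|$-correction in the statement.
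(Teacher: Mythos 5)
Your proof is correct, and the key ``$(\Leftarrow)$'' direction takes a genuinely different route from the paper's. The paper handles the $gU_2$ shadow by changing the ray's basepoint: it takes a second geodesic ray $\gamma'$ based at $g$ with $\gamma'(\infty) = \gamma(\infty)$, uses the standard fact that $\gamma$ and $\gamma'$ stay within Hausdorff distance $5\delta + |g|$ of each other, and then reruns the argument of Lemma~\ref{lem:shad-nonempty} verbatim for $\gamma'$ relative to $gH$. The constant $5\delta + |g|$ enters in one stroke through the change-of-basepoint bound. Your argument instead keeps the single ray $\gamma$ based at $1$ and proves directly that it stays far from $gH$, via the nested slim triangles $[1, z, \gamma(t')]$ and $[1, g, z]$, using that the coset $gH$ is $Q$-quasiconvex as an isometric translate of $H$. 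Your $5\delta + |g|$ arises from the leg $[1,g]$ of the second triangle; the other branches of the case analysis give $4\delta + r$ and $5\delta + Q$, which are dominated since $K \geq r + Q + 4\delta$. The paper's version is more economical because it quotes the earlier lemma as a black box; yours is more self-contained and makes visible exactly which geometric feature (the distance $|g|$ from $1$ to the new coset) forces the additive correction in the hypothesis. Both are valid; just be sure, when writing it out in full, to include the routine check (as in Lemma~\ref{lem:shadow-covers}) that the conclusion $\gamma(t') \in U_1 \cap gU_2$ for all $t' \geq t_0$ transfers to every other ray representing $\gamma(\infty)$, so that $\gamma(\infty)$ genuinely lies in both shadows.
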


\begin{proof}
Firstly, suppose that such an $x$ exists, then let $\gamma$ be a ray based at 1 passing within $C = 3\delta$ of $x$, as in Lemma~\ref{lem:visibility}. As in the proof of Lemma~\ref{lem:shad-nonempty} we see that $\gamma(\infty) \in \shad U_1$ since $\gamma$ passes within $C$ of $C_K(H)$. Secondly, let $\gamma'$ be a geodesic ray based at $g$ such that $\gamma'(\infty) = \gamma(\infty)$. Then the Hausdorff distance between $\gamma$ and $\gamma'$ is at most $5\delta + |g|$ (apply e.g. \cite[Exc.~11.86]{dructu2018geometric}). Again, as in the proof of Lemma~\ref{lem:shad-nonempty} we see that $\gamma'(\infty) \in \shad (gU_2)$, and we're done. 

Conversely, let $\gamma \in p \in \shad U_1 \cap \shad (gU_2)$. By the definition of $\shad$ we have that there is some $t_0$ such that for all $t \geq t_0$, $\gamma(t) \in U_1 \cap gU_2$. Moreover, by Lemma~\ref{lem:liminf} we have for $i = 1,2$ that $d(\gamma(t), H_i) \to \infty$ as $t \to \infty$. Thus, by setting $x = \gamma(t)$ for some sufficiently large $t$, we are done. 
\end{proof}

We're now ready to search for crossings. This algorithm will  check every choice of $H$-almost invariant set and search for any crossings. It will terminate if and only if it finds a crossing for every such choice. Note that since $e(G,H) < \infty$ by Corollary~\ref{cor:ends-of-pair-finite}, there is only finitely many possible $H$-almost invariant sets to check, up to equivalence.

\begin{proposition}\label{prop:search-for-crossings}
There exists an algorithm which, on input of a one-ended hyperbolic group $G$ and generators of a quasiconvex subgroup $H$, will terminate if and only if for every choice $X\subset G$ of $H$-almost invariant set, we have that $X$ is not semi-nested. 
\end{proposition}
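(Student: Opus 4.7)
The plan is to enumerate all equivalence classes of non-trivial $H$-almost invariant subsets of $G$ --- of which there are only finitely many by Corollary~\ref{cor:ends-of-pair-finite} --- and then, for each class, run a semi-decision procedure searching for an element $g \in G \setminus \Comm_G(H)$ witnessing a crossing. These searches proceed in parallel, and the overall algorithm halts exactly when every class is certified as not semi-nested.

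First I would fix computable $r \leq K \leq R$ as in Proposition~\ref{prop:arrk-comp-corr} and compute a finite $H$-cover $F$ of $A_{r,R,K}(H)$ meeting every $H$-orbit of components, which can be done by successively enlarging $F$ by generators of $H$ until the orbit count --- read off from the adjacency digraph $\Delta_F$ using Lemma~\ref{lem:comp-is-coends} --- stabilises. By the remarks following Corollary~\ref{cor:ends-of-pair-finite}, every equivalence class of non-trivial $H$-almost invariant subsets corresponds to a non-empty proper sub-union $U \subset A_{r,\infty,K}(H)$ of $H$-orbits of components, so I can list all such representatives $U_1, \dots, U_m$ off $\Delta_F$. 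Writing $U_i^\ast := A_{r,\infty,K}(H) - U_i$, for each $U_i$ I launch the following thread: enumerate $g \in G$ by word length, discard those in $\Comm_G(H)$ via Proposition~\ref{prop:comm-membership}, and for each surviving $g$ launch four parallel searches over vertices $x \in \Gamma$ checking, via Proposition~\ref{prop:decide-arrk} and routine distance computations, the conclusion of Lemma~\ref{lem:char-crossings} for each of the four shadow intersections
$$
\shad U_i \cap \shad(gU_i), \ \ \shad U_i \cap \shad(gU_i^\ast), \ \ \shad U_i^\ast \cap \shad(gU_i), \ \ \shad U_i^\ast \cap \shad(gU_i^\ast).
$$
If all four succeed for a common $g \notin \Comm_G(H)$, then by Proposition~\ref{prop:crossings-in-boundary} the set represented by $U_i$ is crossed by $gU_i$ with $g$ outside $\Comm_G(H)$, so I halt the $U_i$-thread. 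The overall algorithm halts once every thread has halted.

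For correctness: a representative $U_i$ fails to be semi-nested exactly when some $g \notin \Comm_G(H)$ realises all four largeness conditions, and by Proposition~\ref{prop:crossings-in-boundary} together with Lemma~\ref{lem:char-crossings} such a $g$ is certified by a finite tuple of Cayley graph witnesses that the corresponding thread will eventually uncover. Conversely, if some $U_i$ is semi-nested then no such $g$ exists and its thread never halts. The main conceptual obstacle is that the shadow-intersection test is inherently one-sided: emptiness of a given $\shad U_1 \cap \shad(gU_2)$ cannot be certified in finite time, so the procedure cannot be upgraded to a full decision algorithm --- but a semi-decision procedure is precisely what the statement requires.
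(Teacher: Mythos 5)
Your proposal matches the paper's proof essentially step for step: both enumerate the finitely many equivalence classes of non-trivial $H$-almost invariant sets as unions of $H$-orbits of components of $A_{r,\infty,K}(H)$, both enumerate $g \in G - \Comm_G(H)$ via Proposition~\ref{prop:comm-membership}, and both search for the four witnesses $x$ from Lemma~\ref{lem:char-crossings} to certify all four largeness conditions of Proposition~\ref{prop:crossings-in-boundary}, halting exactly when a crossing outside the commensurator is found for every representative. The only minor inefficiency is the step of enlarging $F$ until the orbit count stabilises, which is unnecessary since Lemma~\ref{lem:comp-is-coends} applies to any finite $H$-cover and already returns the correct orbit count; this does not affect correctness.
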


\begin{proof}
We begin by picking representative choices for every equivalence class of non-trivial $H$-almost invariant subsets $X_1, \ldots, X_n$. In particular, each $X_i$ is a union of $H$-orbits of connected components of $A_{r,\infty,K}(H)$. For notational convenience we identify $X_i^\ast$ with its representative in this list. 

Enumerate elements $g \in G - \Comm_G(H)$ via Proposition~\ref{prop:comm-membership}. 
For each $i = 1, \ldots , n$ search for some $x_{i,1} \in X_i \cap gX_i$, $x_{i,2} \in X_i \cap g(X_i^\ast)$, $x_{i,3} \in X_i^\ast \cap gX_i$, and $x_{i,4} \in X_i^\ast \cap g(X_i^\ast)$ such that the conditions in Lemma~\ref{lem:char-crossings} are met for each $x_{i,j}$. 
We terminate our search if and only if we find such an $x_{i,j}$ for every $i$, $j$. By Proposition~\ref{prop:crossings-in-boundary} and Lemma~\ref{lem:char-crossings} this will terminate if and only if every $H$-almost invariant subset is not semi-nested. 
\end{proof}

\subsection{Splitting detection}\label{sec:spltting-algs}

We now present the final result of this paper, an algorithm to detect splittings over quasiconvex subgroups. We split this algorithm into two cases, and firstly we consider the situation that we know \textit{a priori} that our subgroup has finitely many filtered ends. 

\begin{theorem}\label{thm:decide-splitting-finitecoends}
There is an algorithm which, upon input of a one-ended hyperbolic group $G$ and generators of a quasiconvex subgroup $H$ such that $\tilde e(G,H) < \infty$, will decide whether $H$ is associated with a splitting. Furthermore, if such a splitting exists then the algorithm will output this splitting. 
\end{theorem}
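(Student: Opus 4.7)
The plan is to run, in parallel, the two semi-decision procedures from Proposition~\ref{prop:search-for-splittings} (call it (A)) and Proposition~\ref{prop:search-for-crossings} (call it (B)). Procedure (A) halts if and only if $H$ is associated with a splitting, outputting such a splitting when it does; procedure (B) halts if and only if no non-trivial $H$-almost invariant subset of $G$ is semi-nested. When (A) halts first, return YES together with the splitting it produces; when (B) halts first, return NO. Termination in finite time is guaranteed: if $H$ is associated with a splitting, then (A) halts by Proposition~\ref{prop:search-for-splittings}; otherwise the contrapositive of Theorem~\ref{thm:nsss} forbids any semi-nested $H$-almost invariant subset, so (B) halts.

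The YES output is immediately correct. For the NO output, we must verify that the halting of (B) forces $H$ to not be associated with a splitting. Proposition~\ref{prop:quasiconvex-splittings} delivers this implication whenever $H \neq \Comm_G(H)$ or $H$ is lonely, so the remaining obligation is to deal with the residual case where $H = \Comm_G(H)$ and $H$ is not lonely --- the amalgam-type situation in which the construction in the proof of Proposition~\ref{prop:quasiconvex-splittings} degenerates (deleting the full $H$-orbit of the edge $e$ leaves only $\{v\}$, and the resulting almost invariant set is $H$-finite, hence trivial). This is precisely the point at which the hypothesis $\tilde e(G,H) < \infty$ is invoked.

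By Theorem~\ref{thm:decide-filtered-ends-finite} the finiteness of $\tilde e(G,H)$ lets us compute a finite $H$-cover $F$ of $A_{r,R,K}(H)$ meeting every connected component, so that, via Proposition~\ref{prop:arrk-comp-corr}, the finite $H$-set $\pi_0(\partial G - \Lambda H)$ is algorithmically accessible. Supposing for contradiction that $H = \Comm_G(H)$ is not lonely and $G$ splits as $H \ast_{H'} B$ (or as an HNN extension) with $H' \subsetneq H$ of proper finite index, we refine the degenerate construction of Proposition~\ref{prop:quasiconvex-splittings}: we partition the finitely many components of $\partial G - \Lambda H$ by tracking which subtree of the Bass--Serre tree $T$ they shadow, and use this finite combinatorial structure to pick out an $H$-invariant sub-union of components whose associated almost invariant set is non-trivial and has all its self-crossings confined to the vertex stabiliser $G_v = \Comm_G(H) = H$, hence is semi-nested --- contradicting the termination of (B). The main obstacle is exactly this refined combinatorial construction in the residual case; without $\tilde e(G,H) < \infty$ the components of $\partial G - \Lambda H$ cannot be manipulated as a finite object, and no analogue of the argument seems available, which is why the finiteness hypothesis is essential.
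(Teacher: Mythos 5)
Your proposal runs the crossing search (Proposition~\ref{prop:search-for-crossings}) on $H$ itself, while the paper runs it on a carefully chosen finite-index subgroup $H' \leq H$, namely the one that fixes each of the finitely many components of $A_{r,\infty,K}(H)$ individually. This is not a cosmetic change: it is exactly where the hypothesis $\tilde e(G,H) < \infty$ is used, and without it your argument has a genuine gap.

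The problem is in the residual case you isolate ($H = \Comm_G(H)$, $H$ not lonely, $G$ splits over a proper finite-index $H'' < H$ with $H$ a vertex group). You claim to ``pick out an $H$-invariant sub-union of components whose associated almost invariant set is non-trivial.'' But $H$ may act transitively on the components of $\partial G - \Lambda H$, in which case the only $H$-invariant unions are $\emptyset$ and the whole set --- both trivial --- so no such sub-union exists, and in fact no non-trivial $H$-almost invariant set exists at all. In that event procedure (B) run on $H$ terminates vacuously and your algorithm returns NO, even though $H$ \emph{is} associated with a splitting. The surface example the paper cites (following \cite[p.~32]{scott2003regular}) is of precisely this shape in spirit: $c$ a primitive one-sided simple closed curve on a non-orientable hyperbolic surface, $H = \langle c \rangle$, $e(G,H) = 1$, $\tilde e(G,H) = 2$, $G$ splits over $\langle c^2 \rangle$, and $H = \Comm_G(H)$ since $c$ is primitive and its own maximal cyclic root. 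Here $c$ swaps the two components of $\partial G - \Lambda H$, so there is no non-trivial $H$-almost invariant set to test.

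The paper's fix is to replace $H$ by $H' := \bigcap_i \Stab_H(U_i)$, which has finite index in $H$ because the $U_i$ are finitely many (this is where $\tilde e(G,H) < \infty$ enters). Each individual $U_i$ is then a non-trivial $H'$-almost invariant set, and more importantly any $H''$-almost invariant set, for $H''$ commensurable with $H$, is up to equivalence a union of the $U_i$ and hence is also $H'$-almost invariant. Since $\Comm_G(H') = \Comm_G(H'')$, ``semi-nested'' is independent of which commensurable subgroup one uses, so the crossing search applied to $H'$ genuinely decides the absence of semi-nested sets across the entire commensurability class. That makes ``exactly one of (A), (B) halts'' correct. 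Your proposal does identify the right obstruction and the right hypothesis, but the construction you sketch to overcome it does not work; passing from $H$ to $H'$ is the missing idea.
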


\begin{proof}
Let $U_1, \ldots , U_n$ be the components of $A_{r,\infty, K}(H)$, and let $H' \leq H$ be a finite index subgroup which fixes each individual component, so each $U_i$ is an $H'$-almost invariant subset. Any other subgroup which is commensurable with $H$ will have the same set of filtered ends, so this is the ``finest'' set of $H''$-almost invariant subsets for any subgroup $H''$ commensurable with $H$. It therefore follows that if $H'$ does not admit a semi-nested $H'$-almost invariant set, then neither does any other subgroup which is commensurable with $H$. 

With the above in mind we run two algorithms in parallel. We search for a splitting over a subgroup commensurable with $H$ via Proposition~\ref{prop:search-for-splittings}, and concurrently run the algorithm in Proposition~\ref{prop:search-for-crossings} on $H'$. By the above discussion exactly one of these will terminate, and if the former algorithm terminates then it will output a presentation of a splitting over a subgroup commensurable with $H$. 
\end{proof}

Indeed, if $\tilde e(G,H)$ is not finite then we cannot rely on the machinery used above, as the stabiliser of some component of $\partial G - \Lambda H$ may have infinite index in $H$. Moreover, referring back to the discussion in Section~\ref{sec:counting-ends}, we are unlikely to be able to decide if $\tilde e (G,H)$ is finite for arbitrary quasiconvex $H$, at least with just the current tools presented in this paper. 

Recall that a subgroup $H$ in $G$ is said to be lonely if there is no subgroup $H' \neq H$ such that $H$ is commensurable to $H'$. For a quasiconvex subgroup $H$ of a hyperbolic group $G$, this condition is equivalent to saying that $H = \Comm_G(H)$ and $H$ has no finite quotients. 

\begin{theorem}\label{thm:decide-splittings-lonely}
There is an algorithm which, upon input of a one-ended hyperbolic group $G$, generators of a quasiconvex subgroup $H$, and knowledge of whether $H$ is lonely in $G$, will decide whether $H$ is associated with a splitting. Furthermore, if such a splitting exists then the algorithm will output this splitting. 
\end{theorem}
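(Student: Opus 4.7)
The plan is to split into two cases based on the loneliness information, and in each case reduce to running in parallel the termination-based procedures of Propositions~\ref{prop:search-for-splittings} and~\ref{prop:search-for-crossings}, using Proposition~\ref{prop:quasiconvex-splittings} to guarantee that exactly one of them halts. First, suppose $H$ is lonely. Then $H = \Comm_G(H)$ and the only subgroup commensurable with $H$ is $H$ itself, so $H$ is associated with a splitting if and only if $G$ splits over $H$; by Proposition~\ref{prop:quasiconvex-splittings}(2) this is equivalent to the existence of a semi-nested $H$-almost invariant subset. I therefore run the algorithm of Proposition~\ref{prop:search-for-splittings} in parallel with that of Proposition~\ref{prop:search-for-crossings}, both applied to $H$: exactly one halts, either outputting the desired splitting or certifying that none exists.

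Now suppose we are told $H$ is not lonely. Since loneliness of a quasiconvex subgroup is equivalent to the conjunction ``$H = \Comm_G(H)$ and $H$ has no proper finite-index subgroup,'' its negation yields a dichotomy: either $H \neq \Comm_G(H)$, or $H$ admits a proper finite-index subgroup. The strategy is to effectively produce a subgroup $K$ commensurable with $H$ satisfying $K \neq \Comm_G(K)$, so that Proposition~\ref{prop:quasiconvex-splittings}(1) applies to $K$. To this end I run two searches in parallel: the first enumerates $g \in G$ and uses Proposition~\ref{prop:comm-membership} (together with the decidable membership problem for the quasiconvex $H$) to test whether $g \in \Comm_G(H) \setminus H$; the second enumerates finite quotients $\phi : H \onto F$ and applies Reidemeister--Schreier rewriting (valid since quasiconvex subgroups of hyperbolic groups are finitely presented) to produce generators of $\ker \phi$, yielding a proper finite-index subgroup $H' \leq H$. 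The dichotomy ensures at least one search halts. In the first case I set $K := H$; in the second I set $K := H'$ and observe that $\Comm_G(K) = \Comm_G(H) \supsetneq K$, since commensurable subgroups share a commensurator. Either way $K$ is commensurable with $H$, so $H$ is associated with a splitting if and only if $K$ is, and I finish by running the algorithms of Propositions~\ref{prop:search-for-splittings} and~\ref{prop:search-for-crossings} in parallel on $K$; by Proposition~\ref{prop:quasiconvex-splittings}(1) exactly one of them halts, and if it is the former then it outputs the splitting.

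The main obstacle is the non-lonely case when $H = \Comm_G(H)$: Proposition~\ref{prop:quasiconvex-splittings} does not apply directly to $H$, because the argument for the ``only if'' direction of that proposition needs a genuine gap inside $\Comm_G(H)$ to absorb the crossings, and here there is none. The resolution is to exploit the fact that being associated with a splitting is a commensurability invariant while $\Comm_G(\cdot)$ is constant on commensurability classes: descending to a proper finite-index subgroup $H'$ manufactures exactly the needed gap between $H'$ and $\Comm_G(H') = \Comm_G(H)$, without altering the question being asked. The only remaining algorithmic subtlety is that such an $H'$ must be found effectively whenever it exists, which is precisely what the finite-quotient enumeration via Reidemeister--Schreier accomplishes in parallel with the commensurator search.
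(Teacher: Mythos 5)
Your proof is correct and follows essentially the same approach as the paper's: in both, one runs the splitting-search of Proposition~\ref{prop:search-for-splittings} in parallel with a preprocessing step (searching for $g \in \Comm_G(H) - H$ in tandem with searching for a proper finite-index subgroup $H'$, one of which must succeed when $H$ is not lonely) followed by the crossing-search of Proposition~\ref{prop:search-for-crossings}, invoking Proposition~\ref{prop:quasiconvex-splittings} to guarantee exactly one branch halts. Your write-up makes the dichotomy underlying the non-lonely case and the commensurator-invariance argument more explicit than the paper does, but the algorithm and the justification are the same.
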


\begin{proof}
Firstly, begin running the algorithm from Proposition~\ref{prop:search-for-splittings}, which will terminate if and only if $H$ is associated with a splitting.

Concurrently we run the following. If $H$ is not lonely, then simultaneously search for an element $g \in \Comm_G(H) - H$ and a finite index subgroup $H'$ of $H$. If we find the former then continue, and if we find the latter then replace $H$ with $H'$ and then continue. At least one of these will terminate, and this ensures that $H \neq \Comm_G(H)$. If $H$ is lonely, then just continue.
We now run the algorithm presented in Proposition~\ref{prop:search-for-crossings}. By Proposition~\ref{prop:quasiconvex-splittings}, exactly one of these two procedures will terminate, and if the first algorithm terminates then it will output a presentation of a splitting over a subgroup commensurable with $H$. 
\end{proof}

\begin{corollary}\label{cor:detect-split-rf}
    There is an algorithm which takes in as input a one-ended hyperbolic group $G$ and generators of a quasiconvex, residually finite subgroup $H$. 
    This algorithm will then decide if $H$ is associated with a splitting, and will output such a splitting if one exists. 
\end{corollary}

It is conjectured that the problem of deciding whether a given hyperbolic group has a finite quotient is undecidable, and in fact this problem is known to be equivalent to the well-known conjecture that there exists a hyperbolic group which is not residually finite \cite{bridson2015triviality}. Assuming this conjecture, it would be undecidable whether a given quasiconvex subgroup is lonely. This means that the hypothesis in Theorem~\ref{thm:decide-splittings-lonely} is likely necessary unless we place further restrictions on $H$, such as requiring $H$ be residually finite. 


\bibliographystyle{abbrv}
\bibliography{references}

\begin{thebibliography}{10}

\bibitem{adian1957unsolvability}
S.~I. Adian.
\newblock Unsolvability of some algorithmic problems in the theory of groups.
\newblock {\em Trudy Moskovskogo Matematicheskogo Obshchestva}, 6:231--298,
  1957.

\bibitem{arzhantseva2001quasiconvex}
G.~N. Arzhantseva.
\newblock On quasiconvex subgroups of word hyperbolic groups.
\newblock {\em Geometriae dedicata}, 87(1):191--208, 2001.

\bibitem{barrett2018computing}
B.~Barrett.
\newblock Computing {JSJ} decompositions of hyperbolic groups.
\newblock {\em Journal of Topology}, 11(2):527--558, 2018.

\bibitem{baumslag1994unsolvable}
G.~Baumslag, C.~F. Miller~III, and H.~Short.
\newblock Unsolvable problems about small cancellation and word hyperbolic
  groups.
\newblock {\em Bulletin of the London Mathematical Society}, 26(1):97--101,
  1994.

\bibitem{bestvina1991boundary}
M.~Bestvina and G.~Mess.
\newblock The boundary of negatively curved groups.
\newblock {\em Journal of the American Mathematical Society}, 4(3):469--481,
  1991.

\bibitem{boone1958word}
W.~W. Boone.
\newblock The word problem.
\newblock {\em Proceedings of the National Academy of Sciences},
  44(10):1061--1065, 1958.

\bibitem{bowditch1998cut}
B.~H. Bowditch.
\newblock Cut points and canonical splittings of hyperbolic groups.
\newblock {\em Acta mathematica}, 180(2):145--186, 1998.

\bibitem{bowditch2002splittings}
B.~H. Bowditch.
\newblock Splittings of finitely generated groups over two-ended subgroups.
\newblock {\em Transactions of the American Mathematical Society},
  354(3):1049--1078, 2002.

\bibitem{bridson2013metric}
M.~R. Bridson and A.~Haefliger.
\newblock {\em Metric spaces of non-positive curvature}, volume 319.
\newblock Springer Science \& Business Media, 2013.

\bibitem{bridson2015triviality}
M.~R. Bridson and H.~Wilton.
\newblock The triviality problem for profinite completions.
\newblock {\em Inventiones mathematicae}, 202(2):839--874, 2015.

\bibitem{dahmani2008detecting}
F.~Dahmani and D.~Groves.
\newblock Detecting free splittings in relatively hyperbolic groups.
\newblock {\em Transactions of the American Mathematical Society}, pages
  6303--6318, 2008.

\bibitem{dahmani2011isomorphism}
F.~Dahmani and V.~Guirardel.
\newblock The isomorphism problem for all hyperbolic groups.
\newblock {\em Geometric and Functional Analysis}, 21(2):223--300, 2011.

\bibitem{diao2005grushko}
G.-A. Diao and M.~Feighn.
\newblock The grushko decomposition of a finite graph of finite rank free
  groups: an algorithm.
\newblock {\em Geometry \& Topology}, 9(4):1835--1880, 2005.

\bibitem{dructu2018geometric}
C.~Dru{\c{t}}u and M.~Kapovich.
\newblock {\em Geometric group theory}, volume~63.
\newblock American Mathematical Soc., 2018.

\bibitem{geoghegan2007topological}
R.~Geoghegan.
\newblock {\em Topological methods in group theory}, volume 243.
\newblock Springer Science \& Business Media, 2007.

\bibitem{gerasimov}
V.~V. Gerasimov.
\newblock Detecting connectedness of the boundary of a hyperbolic group.
\newblock {\em Unpublished}, 1999.

\bibitem{gitik2017intersections}
R.~Gitik.
\newblock On intersections of conjugate subgroups.
\newblock {\em International Journal of Algebra and Computation},
  27(04):403--419, 2017.

\bibitem{gromov1987hyperbolic}
M.~Gromov.
\newblock Hyperbolic groups.
\newblock In {\em Essays in group theory}, pages 75--263. Springer, 1987.

\bibitem{houghton1974ends}
C.~H. Houghton.
\newblock Ends of locally compact groups and their coset spaces.
\newblock {\em Journal of the Australian Mathematical Society}, 17(3):274--284,
  1974.

\bibitem{jaco2002algorithms}
W.~Jaco, D.~Letscher, and J.~H. Rubinstein.
\newblock Algorithms for essential surfaces in 3-manifolds.
\newblock {\em Contemporary Mathematics}, 314:107--124, 2002.

\bibitem{jaco1984algorithm}
W.~Jaco and U.~Oertel.
\newblock An algorithm to decide if a 3-manifold is a {Haken} manifold.
\newblock {\em Topology}, 23(2):195--209, 1984.

\bibitem{kapovich1996detecting}
I.~Kapovich.
\newblock Detecting quasiconvexity: algorithmic aspects.
\newblock {\em Geometric and computational perspectives on infinite groups
  (Minneapolis, MN and New Brunswick, NJ, 1994)}, 25:91--99, 1996.

\bibitem{kapovich2002boundaries}
I.~Kapovich and N.~Benakli.
\newblock Boundaries of hyperbolic groups.
\newblock {\em arXiv preprint math/0202286}, 2002.

\bibitem{kapovich2002stallings}
I.~Kapovich and A.~Myasnikov.
\newblock Stallings foldings and the subgroup structure of free groups.
\newblock {\em arXiv preprint math/0202285}, 2002.

\bibitem{kapovich1996greenberg}
I.~Kapovich and H.~Short.
\newblock Greenberg's theorem for quasiconvex subgroups of word hyperbolic
  groups.
\newblock {\em Canadian Journal of Mathematics}, 48(6):1224--1244, 1996.

\bibitem{kropholler1989relative}
P.~H. Kropholler and M.~A. Roller.
\newblock Relative ends and duality groups.
\newblock {\em Journal of Pure and Applied Algebra}, 61(2):197--210, 1989.

\bibitem{lyndon1977combinatorial}
R.~C. Lyndon, P.~E. Schupp, R.~Lyndon, and P.~Schupp.
\newblock {\em Combinatorial group theory}, volume 188.
\newblock Springer, 1977.

\bibitem{niblo2002singularity}
G.~Niblo.
\newblock The singularity obstruction for group splittings.
\newblock {\em Topology and its Applications}, 119(1):17--31, 2002.

\bibitem{niblo2005minimal}
G.~Niblo, M.~Sageev, P.~Scott, and G.~A. Swarup.
\newblock Minimal cubings.
\newblock {\em International Journal of Algebra and Computation},
  15(02):343--366, 2005.

\bibitem{novikov1955algorithmic}
P.~S. Novikov.
\newblock On the algorithmic unsolvability of the word problem in group theory.
\newblock {\em Trudy Matematicheskogo Instituta imeni VA Steklova}, 44:3--143,
  1955.

\bibitem{papasoglu1996algorithm}
P.~Papasoglu.
\newblock An algorithm detecting hyperbolicity.
\newblock {\em Geometric and computational perspectives on infinite groups
  (Minneapolis, MN and New Brunswick, NJ, 1994)}, 25:193--200, 1996.

\bibitem{rabin1958recursive}
M.~O. Rabin.
\newblock Recursive unsolvability of group theoretic problems.
\newblock {\em Annals of Mathematics}, pages 172--194, 1958.

\bibitem{scott1977ends}
P.~Scott.
\newblock Ends of pairs of groups.
\newblock {\em Journal of Pure and Applied Algebra}, 11(1-3):179--198, 1977.

\bibitem{scott1998symmetry}
P.~Scott.
\newblock The symmetry of intersection numbers in group theory.
\newblock {\em Geometry \& Topology}, 2(1):11--29, 1998.

\bibitem{scott2000splittings}
P.~Scott and G.~A. Swarup.
\newblock Splittings of groups and intersection numbers.
\newblock {\em Geometry \& Topology}, 4(1):179--218, 2000.

\bibitem{scott2003regular}
P.~Scott and G.~A. Swarup.
\newblock {\em Regular neighbourhoods and canonical decompositions for groups}.
\newblock Soci{\'e}t{\'e} math{\'e}matique de France, 2003.

\bibitem{scott1979topological}
P.~Scott and T.~Wall.
\newblock Topological methods in group theory.
\newblock In {\em Homological group theory (Proc. Sympos., Durham, 1977)},
  volume~36, pages 137--203, 1979.

\bibitem{sela1995isomorphism}
Z.~Sela.
\newblock The isomorphism problem for hyperbolic groups {I}.
\newblock {\em Annals of Mathematics}, pages 217--283, 1995.

\bibitem{serre2002trees}
J.-P. Serre.
\newblock {\em Trees}.
\newblock Springer Science \& Business Media, 2002.

\bibitem{stallings1968torsion}
J.~R. Stallings.
\newblock On torsion-free groups with infinitely many ends.
\newblock {\em Annals of Mathematics}, pages 312--334, 1968.

\bibitem{stallings1972group}
J.~R. Stallings.
\newblock {\em Group theory and three-dimensional manifolds}.
\newblock Yale University Press, 1972.

\bibitem{swarup1996cut}
G.~Swarup.
\newblock On the cut point conjecture.
\newblock {\em Electronic Research Announcements of the American Mathematical
  Society}, 2(2):98--100, 1996.

\bibitem{touikan2018detecting}
N.~Touikan.
\newblock Detecting geometric splittings in finitely presented groups.
\newblock {\em Transactions of the American Mathematical Society},
  370(8):5635--5704, 2018.

\bibitem{vonseel2018ends}
A.~Vonseel.
\newblock Ends of {Schreier} graphs of hyperbolic groups.
\newblock {\em Algebraic \& Geometric Topology}, 18(5):3089--3118, 2018.

\end{thebibliography}



\end{document}